\newtheoremstyle{style1}
	   {15pt}       
	    {25pt}      
	    {\itshape}  
	    {}          
	    {\bfseries} 
	    {:}         
	    {.5em}      
	    {}          
\newtheoremstyle{style2}
 {15pt}                   
 {25pt }                   
 {}                   
 {}                      
 {\bfseries}  
 {:}                     
 {.5em}              
 {}
\theoremstyle{style1} 
\newtheorem{theorem}{Theorem} \numberwithin{theorem}{section}
\newtheorem{lemma}[theorem]{Lemma} 
\newtheorem{proposition}[theorem]{Proposition}
\newtheorem{corollary}[theorem]{Corollary}
\newtheorem{assumption}[theorem]{Assumption}
\theoremstyle{style2}
\newtheorem{definition}[theorem]{Definition} 
\newtheorem{remark}[theorem]{Remark}
\newtheorem{construction}[theorem]{Construction}
\renewenvironment{proof}{\vspace{0pt}\noindent{\it Proof:}\hspace{0.2em}}{\hfill\qed\vspace{20pt}}
\begin{document}
 
\title {Stokes Structure and Direct Image of Irregular Singular $\mathcal D$-Modules\\}
\vskip 2cm

\author{Hedwig Heizinger }
\date{}

\maketitle

\thispagestyle{empty}

\setlength\parindent{0pt}

\begin{abstract}
\emph{Abstract:} In this paper we will present a way of examining the Stokes structure of certain irregular singular $\mathcal D$-modules, namely the direct image of exponentially twisted regular singular meromorphic connections, in a topological point of view. This topological description enables us to compute Stokes data for an explicit example concretely.
\end{abstract}

\begin{center}
\noindent\rule{4cm}{0.4pt}
\end{center}


\section{Introduction -- Preliminaries}

Recently there was a lot of progress in proving an irregular Riemann-Hilbert correspondence in higher dimensions. Sabbah introduced the notion of \emph{good} meromorphic connections and proved an equivalence of categories in this case \cite{Sab}, which was extended to the general case by Mochizuki \cite{Moc09} and Kedlaya \cite{Ked10}. Furthermore d'Agnolo/Kashiwara proved an irregular Riemann-Hilbert correspondence for all dimensions using subanalytic sheaves \cite{DAK}.\\
Nevertheless it is still difficult to describe the Stokes phenomenon for explicit situations and to calculate Stokes data concretely.  In their recent article, Hien/Sabbah developed a topological way to determine Stokes data of the Laplace transform of an elementary meromorphic connection\cite{HiS14}. The techniques used by Hien/Sabbah can be adapted to other situations. Hence in this article we will present a topological view of the Stokes phenomenon for the direct image of an exponentially twisted meromorphic connection $\mathcal M$ in a 2-dimensional complex manifold. Namely we will consider the following situation: \\
Let $X= \Delta \times \mathbb P^1$ be a complex manifold, where $\Delta$ denotes an open disc in $0\in \mathbb C$ with coordinate $t$. We denote the coordinate of $\mathbb P^1$ in $0$ by $x$ and the coordinate in $\infty$ by $y= \frac 1 x$.   Let  $\mathcal M$ be a regular singular holonomic $\mathcal D_X$-module. We have the following projections: 
	\[
	\begin{xy}
  		\xymatrix
  		{
  		&  \Delta \times \mathbb P^1  \ar[dl]_{p} \ar[rd]^{q}& \\
  		\Delta & & \mathbb P^1
 	 	}
	\end{xy}
	\]
	
Let $\overline D$ denote the singular locus of $\mathcal M$, which consists of  $ \{ 0\} \times \mathbb P^1 =: D$, $\Delta \times \{ \infty\}$ and some additional components.  We will distinguish between the components 
	\begin{itemize}
		\item $S_{i \in I}$ $(I=\{ 1, \ldots, n\})$,  which meet  $D$ in the point $ \left( 0,\infty\right)$ and 
		\item $\widetilde S_{j\in J}$ $( J = \{1, \ldots, m\})$, which meet  $\{0\} \times \mathbb P^1$ in some other point.
	\end{itemize}
Furthermore we will require the following conditions on $\overline D$:

\begin{assumption}  \label {Ass 1} Locally in $ \left( 0,\infty\right)$ the irreducible components $S_i$ of the divisor $\overline D$ achieve the following conditions: 
	\begin{itemize}
 	 	\item $S_i: \mu_i \left( t\right) y= t^{q_i}$, where $\mu_i$ is holomorphic and $\mu_i \left( 0\right) \neq0$.
 	 	\item For  $i \neq j$ either $q_i \neq q_j$ or $\mu_i \left( 0\right) \neq \mu_j \left( 0\right)$  holds. 
 	\end{itemize}
\end{assumption}	
	
\begin{assumption} \label {Ass 2} The irreducible components $\widetilde S_j$ intersect $D$  in pairwise distinct points. Moreover we assume $\widetilde S_j$ to be smooth, i.\:e. locally around the intersection point they can be described as 
\[\widetilde S_j: \mu_j \left( t\right) x= t^{q_j}.\]
\end{assumption}	

\begin{center}
	\begin{tikzpicture}[domain=-1.3:1.3] 
	\draw[-]  ( 0,-4.2) --  ( 0,0.2) node[above] {$( 0,\infty)$};
	\draw[-] [color= red] ( -2,-4) -- ( 2,-4) node[right] {$\Delta \times \{0\}= \widetilde S_1$};
	\draw[-]  ( -2,0) -- ( 2,0) node[right]{$\Delta \times \{ \infty\}$};
	\draw[color=blue] plot ( \x,{ -\x *\x *1.2}) node[right] {$S_2$};
  	\draw[color=blue] plot ( \x,{-\x *\x *\x* 0.6}) node[right] {$S_1$};
 	 \draw[color= red] plot ( \x, {-\x *\x* 0.1 -2.3}) node [right] {$\widetilde S_3$};
 	 \draw[color= red] plot ( \x , {\x *\x* \x* 0.2 -3.4 }) node [right] {$\widetilde S_2$};
	\end{tikzpicture}
\end{center}
\vspace*{0.5cm}
We want to examine  $p_+ \left(  \mathcal M \otimes \mathcal E^{q}\right)$. This is a complex with  $\mathcal H^k p_+ \left(  \mathcal M \otimes \mathcal E^{q}\right)= 0 $ for $k \neq -1,0$. Furthermore one can show that even $\mathcal H^{-1} p_+ \left(\mathcal M \otimes \mathcal E^{q}\right)= 0 $ on $\Delta^{\ast}$ (cf. \cite{Sab2}, p. 161), i.\:e. it is only supported in $0$. Therefore, we will consider $\mathcal H^0 p_+ \left(  \mathcal M \otimes \mathcal E^{q}\right)$. 
We will assume $\Delta$ small enough such that $0$ is the only singularity of  the $\mathcal D_{\Delta}$-module  $\mathcal H^0p_+ \left(  \mathcal M \otimes \mathcal E^{q}\right)$ and we denote its germ at $0$ by
	$$ \mathcal N:= \left( \mathcal H^0p_+ \left(  \mathcal M \otimes \mathcal E^{q}\right)\right)_0.$$
	
In the following we will take a closer look at the Stokes-filtered local system $(\mathcal L, \mathcal L_{\leq \psi})$ (Chapter 2), which is associated to the $\mathcal D_{\Delta}$-module $\mathcal N$ by the irregular Riemann-Hilbert correspondence as mentioned above. We will use an isomorphism
\[ \Omega: \mathcal L_{\leq \psi} \xrightarrow{\cong} \mathcal H^1 R \widetilde p_{\ast} \operatorname{DR}^{mod \: D}  \left( \mathcal M \otimes  \mathcal E^{\frac 1 y}\otimes \mathcal E^{-\psi}\right)\]
(proved by Mochizuki) to develop a topological description for $\mathcal L_{\leq \psi}$. \\

In Chapter 3 we will use this topological perspective to present a way of determining Stokes matrices for an explicit example, where the singular locus of a meromorphic connection $\mathcal M$ of rank $r$  only consists of two additional irreducible components, namely $(S_1: y=t)$ and $(\widetilde S_1: x=0$).  We will describe the Stokes-filtered local system $\mathcal L$ (which in this case will be of exponential type) in terms of linear data, namely a set of linear Stokes data of exponential type, defined as follows:\\

Let $ \Phi =\{ \phi_i \mid i \in I\}$ denote a finite set of exponents $\phi_i$ of pole order $\leq 1$ and let $\theta_0 \in \mathbb S^1$ be a generic angle, i.\:e. it is no Stokes direction with respect to the $\phi_i$s. We  get a unique ordering of the exponents $\phi_0 <_{\theta_0} \phi_1  <_{\theta_0} \ldots  <_{\theta_0} \phi_n$ and the reversed ordering for $\theta_1 := \theta_0 + \pi$. 

\begin{definition}[\cite{HS}, Def 2.6]  \label{SD} The \emph {category  of Stokes data of exponential type} (for a set of exponents $\Phi$ ordered by $\theta_0$) has objects consisting of two families of $\mathbb C$-vector spaces $ \left( G_{\phi_i}, H_{\phi_i}\right)$
 and two morphisms 

	\[
	\begin{xy}
  		\xymatrix
  		{
  	\bigoplus\limits_{i=0}^{n} G_{\phi_i} \ar[r]^{S}& \bigoplus\limits_{i=0}^{n} H_{\phi_i} & \bigoplus\limits_{i=0}^{n} H_{\phi_{n-i}} \ar[r]^{S'}& \bigoplus\limits_{i=0}^{n} G_{\phi_{n-i}}
 	 	}
	\end{xy}
	\]
such that
	\begin{enumerate}
		\item $S$ is a block upper triangular matrix, i. e. $S_{ij}: G_{\phi_i} \to H_{\phi_j}$ is zero for $i >j $ and $S_{ii}$ is invertible (thus $S$ is invertible and $						\operatorname{dim} \: G_{\phi_i} = \operatorname{dim}\: H_{\phi_i}$)
		\item $S'$ is a block lower triangular matrix, i. e. $S'_{ij}: H_{\phi_{n-i}} \to G_{\phi_{n-j}}$ is zero for $i <j $ and $S'_{ii}$ is invertible (thus $S'$ is invertible)
	\end{enumerate}
A morphism consists of morphisms of $\mathbb C$-vector spaces $\lambda^G_{i}: G_{\phi_i} \to G'_{\phi_i}$ and  $\lambda^H_{i}: H_{\phi_i} \to H'_{\phi_i}$, which are compatible with the corresponding diagrams.
\end{definition}
The correspondence between Stokes-filtered local systems  of exponential type and linear Stokes data is stated in the following theorem. For a proof we refer to \cite{HS}, p. 12/13.

\begin{theorem} There is an equivalence of categories between the Stokes-filtered local systems of exponential type and  Stokes data of exponential type.
\end{theorem}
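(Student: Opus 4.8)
The strategy is to exhibit functors in both directions and show they are mutually quasi-inverse; essentially all of the analytic content is the classical local structure result for Stokes filtrations, namely that over a small enough arc $I_\theta$ around a non-Stokes direction $\theta$ the filtration $\mathcal L_{\leq\bullet}$ splits and the splitting $\mathcal L|_{I_\theta}\xrightarrow{\ \sim\ }\bigoplus_\phi\operatorname{gr}_\phi\mathcal L|_{I_\theta}$ is unique (one may quote this from \cite{HS}). For the functor towards Stokes data, fix $\theta_0$ and put $\theta_1=\theta_0+\pi$; both are non-Stokes, so there are splittings near each. Let $G_{\phi_i}$, resp.\ $H_{\phi_i}$, denote the degree-$\phi_i$ stalk of the graded local system $\bigoplus_\phi\operatorname{gr}_\phi\mathcal L$ at $\theta_0$, resp.\ at $\theta_1$, and let $\ell_+,\ell_-$ be the two open arcs joining $\theta_0$ and $\theta_1$ (through $\theta_0+\tfrac{\pi}{2}$ and $\theta_0-\tfrac{\pi}{2}$). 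Parallel transport of $\mathcal L$ along $\ell_+$, read through the two splittings, defines an isomorphism $S\colon\bigoplus_iG_{\phi_i}\to\bigoplus_iH_{\phi_i}$, and transport along $\ell_-$ defines $S'$; composing the two recovers, up to the monodromies of the graded pieces, the monodromy of $\mathcal L$, and the reindexing by $\phi_{n-i}$ simply records that the dominance order at $\theta_1$ is the reverse of the one at $\theta_0$.

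Next one checks conditions (1) and (2) of Definition~\ref{SD}. Block triangularity of $S$ is forced by the fact that each $\mathcal L_{\leq\psi}$ is a subsheaf of $\mathcal L$ over all of $\mathbb S^1$: along $\ell_+$ an exponent $\phi_i$ is overtaken, in the $\theta_0$-order, only by the ones below it, so $S$ cannot carry a section of $G_{\phi_i}\subseteq\mathcal L_{\leq\phi_i}$ into an $H_{\phi_j}$ with $j<i$; and modulo the lower-order part $S_{ii}$ induces the transport isomorphism of $\operatorname{gr}_{\phi_i}\mathcal L$ along the closed arc, hence is invertible. Along $\ell_-$ the order is traversed in the opposite sense, which gives exactly the block lower triangularity of $S'$. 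Functoriality is automatic: a morphism of Stokes-filtered local systems commutes with $\operatorname{gr}$ and with parallel transport, and by uniqueness of the splittings it induces maps $\lambda^G_i,\lambda^H_i$ compatible with $S$ and $S'$.

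For the reverse functor, from $(G_\bullet,H_\bullet,S,S')$ glue a local system on $\mathbb S^1$ out of the constant sheaf $\bigoplus_iG_{\phi_i}$ on a neighbourhood of $\overline{\ell_-}$ and $\bigoplus_iH_{\phi_i}$ on a neighbourhood of $\overline{\ell_+}$, with gluing isomorphisms built from $S$ across $\theta_0$ and from $S'$ across $\theta_1$; on each arc equip it with the tautological Stokes filtration attached to the direct sum decomposition and the pointwise order $\leq_\theta$, and use the block triangular shape of $S$ and $S'$ to verify that the two candidate subsheaves $\mathcal L_{\leq\psi}$ agree on the overlaps around $\theta_0$ and $\theta_1$, so that they glue to a genuine Stokes filtration. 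The two composites are naturally isomorphic to the identities: starting from $(\mathcal L,\mathcal L_{\leq\bullet})$, the reconstructed object has the same graded pieces and the same transports along $\ell_\pm$ by construction, and uniqueness of splittings upgrades this to an isomorphism of Stokes-filtered local systems; starting from linear data, the splittings of the reconstructed $\mathcal L$ at $\theta_0$ and $\theta_1$ are the tautological ones, so one recovers $(G_\bullet,H_\bullet,S,S')$ on the nose.

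The step I expect to be the main obstacle is the compatibility of the two descriptions of the filtration across the Stokes directions lying on $\ell_+$ and $\ell_-$: one must prove that block triangularity of $S$ and $S'$ is precisely the condition ensuring that the glued subsheaves still satisfy the local grading axiom of a Stokes filtration, and, conversely, that an abstract Stokes filtration of exponential type forces exactly that triangularity. This is where the geometry special to exponential type --- Stokes directions occurring in antipodal pairs and the order-reversal under $\theta\mapsto\theta+\pi$ --- is indispensable, and it is the one point that cannot be reduced to formal manipulations.
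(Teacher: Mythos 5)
The paper itself offers no proof of this theorem; it simply refers to \cite{HS}, p.~12--13, and your proposal follows essentially the route of that reference: canonical splittings attached to the two antipodal generic directions, parallel transport along the two half-circles to produce $S$ and $S'$, block triangularity from the reversal of the dominance order, and a gluing construction for the quasi-inverse. This is also exactly the mechanism the paper later uses in Construction~\ref{SaHe}.

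There is, however, one genuine flaw in how you ground the argument. You assert that over a small arc $I_\theta$ around a non-Stokes direction the splitting $\mathcal L|_{I_\theta}\cong\bigoplus_\phi\operatorname{gr}_\phi\mathcal L|_{I_\theta}$ exists \emph{and is unique}, and you lean on this uniqueness both to make $S$ and $S'$ well defined and to obtain functoriality and the quasi-inverse property. On a small arc containing no Stokes direction the splitting is far from unique: any filtered automorphism of $\mathcal L|_{I_\theta}$ inducing the identity on the graded pieces yields another splitting, and already for two exponents these form a nontrivial unipotent group, so as written your $S$ and $S'$ depend on choices and the functoriality step collapses. The statement you actually need from \cite{HS} is that the splitting exists and is unique on each \emph{closed} interval of length $\pi$ with generic endpoints --- equivalently, that the filtrations on $\mathcal L_{\theta_0}$ and $\mathcal L_{\theta_1}$ are opposite with respect to parallel transport along either half-circle. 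This is precisely where exponential type (pole order $\leq 1$) enters: the Stokes directions of each pair are antipodal, so each closed half-circle with generic endpoints contains exactly one of them, which forces both existence and uniqueness. You correctly sense at the end that the antipodal structure is the irreducible analytic input, but you locate it in the gluing compatibility rather than where it actually lives, namely in this uniqueness-of-splitting lemma. With that correction the rest of your argument goes through as in the cited reference.
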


By associating the Stokes-filtered local system to a set of Stokes data via this equivalence of categories and using the isomorphism $\Omega$ we will finally get an explicit description of the Stokes data in our concrete example. It is stated in the following 

 \begin{theorem} \label{theorem2} Fix the following data: 
	 \begin{itemize}
		 \item $L_0= \mathbb V \oplus \mathbb V$, $L_1= \mathbb V \oplus \mathbb V$
		 \item  $S_0^1= N_{\pi}= \begin{pmatrix} -1 & 1-ST^{-1} \\ 0 & -ST^{-1} \end{pmatrix}$,  $S_1^0= \left(  \mu_0^{\pi} \circ \mu_{\pi}^0\right) \cdot N_0= \begin{pmatrix} U & 0 \\ 				0 & U \end{pmatrix} \cdot \begin{pmatrix} -TS^{-1} & 0 \\ 1-TS^{-1} & -1 \end{pmatrix}$
	 \end{itemize}
where $\mathbb V$ is the generic stalk of the local system attached to $\mathcal M$ and $S$,  $T$ denote the monodromies around the strict transforms of  the irreducible components $\widetilde S_1$, $S_1$ in the singular locus of $\mathcal M$ and $U$ denotes the mondromy around the component $ \{0\} \times \mathbb P^1$. Then 
 		$$ \left( L_0, L_1, S_0^1, S_1^0\right)$$
 defines a set of Stokes data for $\mathcal H^0 p_+ \left( \mathcal M \otimes \mathcal E^{\frac 1 y}\right)$.
 \end{theorem}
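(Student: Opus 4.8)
The plan is to feed the specific geometry of this example into the topological description of $\mathcal L_{\le\psi}$ developed in Chapter~2 and then translate the resulting Stokes-filtered local system into linear Stokes data via the equivalence of categories between Stokes-filtered local systems of exponential type and Stokes data of exponential type. The first step is to pin down the exponents: since $\mathcal M$ is regular singular and the only additional components of $\overline D$ are $S_1:y=t$ and $\widetilde S_1:x=0$, the confluence, as $t\to0$, of the singular point carried by $S_1$ (sitting at $x=1/t$ in the fibre of $q$) with the irregular point $x=\infty$ created by the twist $\mathcal E^{1/y}=\mathcal E^{x}$ makes $\mathcal N$ of exponential type with exactly two exponents, $\phi_0=0$ and $\phi_1=1/t$ up to a unit, each with multiplicity $r$; an index computation on $\mathbb P^1$ confirms that $\mathcal L$ has rank $2r$. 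Thus $n=1$, there is one Stokes direction near $\theta=0$ and one near $\theta=\pi$, and the category of Stokes data of exponential type reduces to the data $(L_0,L_1,S_0^1,S_1^0)$ of the statement with $L_0=\mathcal L_{\theta_0}\cong\mathbb V\oplus\mathbb V$, $L_1=\mathcal L_{\theta_1}\cong\mathbb V\oplus\mathbb V$ and $\theta_1=\theta_0+\pi$.

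Next I would compute these stalks together with the local Stokes and connection maps by means of the isomorphism $\Omega$. Here $\mathcal L_{\le\psi,\theta}\cong\mathcal H^1 R\widetilde p_\ast\operatorname{DR}^{mod \: D}(\mathcal M\otimes\mathcal E^{1/y}\otimes\mathcal E^{-\psi})$ is computed on the real oriented blow-up underlying $\widetilde p$, whose fibre over $e^{i\theta}$ in the boundary circle above $0\in\Delta$ is a copy of $\mathbb P^1$ real-blown-up at $x=\infty$, carrying the local system of $\mathcal M$ with monodromy $S$ around $x=0$ and $T$ around the strict transform of $S_1$; in the limit $t\to0$ along $\arg t=\theta$ this strict transform lands at a marked point of the boundary circle at $x=\infty$ whose position depends linearly on $\theta$. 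A cellular (Mayer--Vietoris type) decomposition of this fibre then exhibits $\mathcal L_\theta\cong\mathbb V\oplus\mathbb V$, with one summand generated by cycles enclosing $x=0$ and the other by the rapid-decay cycles running out to $x=\infty$ along the half of the boundary circle on which $\mathcal E^{1/y}$ decays. Recording, as $\theta$ varies, which of these cycles contribute to the subsheaf $\mathcal L_{\le\psi}$ yields the Stokes filtration, and comparing the two splittings across a Stokes direction gives the local matrices $N_0=\begin{pmatrix}-TS^{-1}&0\\1-TS^{-1}&-1\end{pmatrix}$ near $\theta=0$ and $N_\pi=\begin{pmatrix}-1&1-ST^{-1}\\0&-ST^{-1}\end{pmatrix}$ near $\theta=\pi$, the off-diagonal blocks arising from the recombination of the rapid-decay cycles at $x=\infty$ when $\theta$ crosses the Stokes value.

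Finally I would glue the two directions together. The maps $\mu_\pi^0$ and $\mu_0^\pi$ are the parallel transports of $\mathcal L$ along the two boundary arcs joining $\theta_0$ and $\theta_1$, so that $\mu_0^\pi\circ\mu_\pi^0$ traverses the whole circle and equals the monodromy of the family $p$ at $t=0$, which on each summand $\mathbb V$ is the monodromy $U$ of $\mathcal M$ around $D=\{0\}\times\mathbb P^1$; hence $\mu_0^\pi\circ\mu_\pi^0=\begin{pmatrix}U&0\\0&U\end{pmatrix}$. Setting $S_0^1=N_\pi$ and $S_1^0=(\mu_0^\pi\circ\mu_\pi^0)\cdot N_0$ one checks at once that $S_0^1$ is block upper triangular with invertible diagonal blocks and $S_1^0$ is block lower triangular with invertible diagonal blocks, so the axioms of Definition~\ref{SD} hold; by construction the pair $(L_0,L_1,S_0^1,S_1^0)$ is carried by the equivalence of categories to $(\mathcal L,\mathcal L_{\le\psi})$, which via $\Omega$ is the Stokes-filtered local system of $\mathcal H^0 p_+(\mathcal M\otimes\mathcal E^{1/y})$, as required.

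The main obstacle is the second step: faithfully following the rapid-decay cycles through the real blow-up at $x=\infty$ and determining precisely how they recombine as $\theta$ sweeps across the two Stokes directions. This is exactly what produces the non-trivial off-diagonal blocks $1-ST^{-1}$ and $1-TS^{-1}$ and fixes the signs on the diagonal, and getting it right requires care with orientation conventions, with the identification of the moving cycle produced by $S_1$, and with the way the monodromies $S$, $T$ and $U$ interact in the glued local system. By comparison, the determination of the exponents and the verification of the axioms of Definition~\ref{SD} are routine once this analysis is in place.
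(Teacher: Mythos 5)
Your outline follows the same route as the paper: read off the exponents $\{0,\tfrac 1t\}$ with multiplicity $r$ from the formal decomposition, use $\Omega$ to turn the stalks of $\mathcal L_{\le\psi}$ into compactly supported $H^1$ of a local system on the fibre of the real oriented blow-up, track which part of the support survives as $\vartheta$ varies to get the filtration, and glue the two half-circle transports into the monodromy $U$ around $\{0\}\times\mathbb P^1$. The splitting of $\mathcal L_\vartheta$ into two copies of $\mathbb V$, the identification $\mu_0^{\pi}\circ\mu_{\pi}^0=\operatorname{diag}(U,U)$, and the final verification of the triangularity axioms of Definition \ref{SD} via oppositeness of the filtrations all match the paper's argument.

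The genuine gap is the one you yourself flag as ``the main obstacle'': the matrices $N_0$ and $N_{\pi}$ are asserted, not derived, and they are the entire content of the theorem. In the paper this step is not a heuristic about recombining rapid-decay cycles but a concrete \u{C}ech computation: one fixes a Leray covering $\mathfrak A$ of the fibre $\overline A\times\{\pi\}$ adapted to $B^{\pi}_{0}$ and $B^{\pi}_{1/t}$, transports the covering at $\vartheta=0$ to a second covering $\widetilde{\mathfrak A}$ of the same fibre by a rotation diffeomorphism lifting $\partial_\vartheta$, refines both to a common Leray covering $\mathfrak B$, and solves $\bigl(\operatorname{ref}_{\widetilde{\mathfrak A}\to\mathfrak B}(\tilde a_1,\tilde a_3)\bigr)_{(i,j)}=\bigl(\operatorname{ref}_{\mathfrak A\to\mathfrak B}(a_2,a_4)\bigr)_{(i,j)}\ \operatorname{mod}\ \operatorname{im}(d_0)$ componentwise; the monodromies $S$ and $T$ enter through the choices of analytic continuation needed to identify the sections over the pieces of $\mathfrak B$ with $\mathbb V$, and this is what produces the signs and the off-diagonal blocks $1-ST^{-1}$, $1-TS^{-1}$. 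Without carrying out some version of this computation your argument establishes only that the Stokes matrices are block triangular with invertible diagonal blocks, not the stated formulas. A secondary inaccuracy: your description of the fibre as $\mathbb P^1$ real-blown-up at $x=\infty$, with the marked point of $S_1$ sitting on the boundary circle, skips the resolution $e$ of Lemma \ref{BU}; since $S_1$ passes through $(0,\infty)$, where the pole of $\mathcal E^{1/y}$ lives, one must first blow up to separate $S_1$ from $\Delta\times\{\infty\}$ and make $g=\tfrac 1y-\psi$ good, after which $P_1$ is an interior leak of the disc $\overline A$ rather than a boundary point. This matters because the Leray coverings and the supports $B^{\vartheta}_{\psi}$ are defined on the resolved fibre.
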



\section{Stokes-filtered local system $ \left( \mathcal L, \mathcal L_{\leq \psi}\right)$}

In \cite{Rou} Roucairol examined the formal decomposition of direct images of $\mathcal D$-modules in the situation presented above.  She determined the exponential factors as well as the rank of the corresponding regular parts appearing in the formal decomposition of $\mathcal N$  . In our case this leads to the following  result: 

\begin {theorem}[\cite{Rou}, Thm 1] \label{form_dec} Let $\mathcal M$ be a regular singular $\mathcal D_X$-module with singular locus $\overline D$ which achieves the previous assumptions  \ref {Ass 1}  and \ref {Ass 2}. Then $ \hat{\mathcal N}:= \left(  \mathcal H^0p_+ \left(  \mathcal M \otimes \mathcal E^q\right)\right)_0^{\land}$ decomposes as 
		$$ \hat{\mathcal N}= R_0 \oplus \bigoplus_{i \in I} \left( R_i \otimes \mathcal E^{\psi_i \left( t\right)}\right) ,$$
where $R_0,  R_i \:  \left( i \in I\right)$ are regular singular $\mathcal D_{\Delta}$- modules and $\psi_i \left( t\right)= \mu_i \left( t\right) t^{-q_i}$. Moreover we have: 
	\begin{itemize}
		\item $rk  \left( R_i\right) = \operatorname{dim} \: \Phi_{P_i}$ where $\Phi_{P_i}$ denotes the vanishing cycles of $\operatorname{DR} (e^+ \mathcal M)$ at the intersection 			point $P_i$ of (a strict transform of) $S_i$ with the exceptional divisor after a suitable blow up $e$.
		\item $rk \left( R_0\right)= \sum_{j \in J} \operatorname{dim} \:  { \Phi_{\widetilde P_j}}$, where $\Phi_{\widetilde P_j}$ denotes the vanishing cycles of $\operatorname{DR} 			(\mathcal M)$ at the intersection point $\widetilde P_j$ of $\widetilde S_j$ with $D$.
	\end{itemize}
\end{theorem}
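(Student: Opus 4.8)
The plan is to compute the formal germ $\hat{\mathcal N}$ by localising the direct image along the central fibre $D=p^{-1}(0)$ and identifying the exponential factors with the limiting positions, as $t\to 0$, of the singularities of $\mathcal M_t$ that run off to $x=\infty$, where $\mathcal E^x$ is irregular. First I would note that the rank of $\hat{\mathcal N}$ equals the generic rank of $\mathcal H^0p_+(\mathcal M\otimes\mathcal E^q)$ on $\Delta^{\ast}$, which --- since $p$ is proper and $\mathcal H^{-1}p_+$ vanishes on $\Delta^{\ast}$, as cited --- is the dimension of the fibrewise de Rham cohomology of $\mathcal M_t\otimes\mathcal E^x$ on $\mathbb P^1$ for $t\ne 0$; for such $t$ this only involves the finitely many singular points of $\mathcal M_t\otimes\mathcal E^x$ on the fibre, namely $\infty$, the positions of the $S_i$, which by Assumption~\ref{Ass 1} lie at $x=\mu_i(t)t^{-q_i}=\psi_i(t)$, and the positions of the $\widetilde S_j$, which tend to fixed finite points. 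As $t\to 0$ the point $\infty$ and the positions $\psi_i(t)$ coalesce at $(0,\infty)$, while the positions of the $\widetilde S_j$ stay near the points $\widetilde P_j:=\widetilde S_j\cap D$, and the formal structure of $\hat{\mathcal N}$ at $0$ is governed by the asymptotics of these local contributions.

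To make this precise I would introduce a resolution $e\colon\widetilde X\to X$, a composition of point blow-ups over $(0,\infty)$ (and, if necessary, over the $\widetilde P_j$), chosen so that the pulled-back twist $\mathcal E^{x\circ q\circ e}$ has monomial polar divisor, so that the strict transforms of the $S_i$ become smooth and meet the exceptional divisor transversally at pairwise distinct points $P_i$ (possible by Assumption~\ref{Ass 1}), and so that the strict transforms of the $\widetilde S_j$ meet $D$ transversally at the $\widetilde P_j$ (Assumption~\ref{Ass 2}); this endows $e^{+}(\mathcal M\otimes\mathcal E^q)$ with a good formal structure along the total transform of $\overline D$ \cite{Ked10, Moc09}. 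Since $\mathbf R e_{\ast}\mathcal O_{\widetilde X}\cong\mathcal O_X$, one has $e_{+}e^{+}\cong\mathrm{id}$ on holonomic modules, hence $p_+(\mathcal M\otimes\mathcal E^{q})\cong(p\circ e)_+\bigl(e^{+}\mathcal M\otimes\mathcal E^{x\circ q\circ e}\bigr)$. Because $\psi_i$ is the value of $x$ along $S_i$ and $\mathcal E^{\psi_i}$ is pulled back from $\Delta$, the projection formula lets one factor $\mathcal E^{\psi_i(t)}$ out of the part of the push-forward supported near $P_i$, leaving a push-forward of $e^{+}\mathcal M$ twisted by a remainder phase that vanishes along the strict transform of $S_i$; near $\widetilde P_j$, where $x$ is finite, $\mathcal E^{q}$ is a regular twist and does not even change $\operatorname{DR}$ as a constructible complex.

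I would then localise the push-forward. A Mayer--Vietoris / excision argument for $\mathbf R(p\circ e)_{\ast}$ of the moderate de Rham complex, combined with the first step, splits $\hat{\mathcal N}$ into the direct sum of the local contributions at the $P_i$ and at the $\widetilde P_j$, together with a regular part, the latter two being collected into $R_0$. At each $P_i$, after pulling out $\mathcal E^{\psi_i}$, the residual push-forward is, in suitable local coordinates, a local Fourier transform of the formal germ of $e^{+}\mathcal M$ at $P_i$; by the stationary-phase formula for local Fourier transforms, together with the stability of goodness under proper push-forward \cite{Sab}, it is regular of rank $\operatorname{dim}\Phi_{P_i}$, the dimension of the vanishing cycles of $\operatorname{DR}(e^{+}\mathcal M)$ at $P_i$, so the $P_i$-contribution is $R_i\otimes\mathcal E^{\psi_i(t)}$ with $\operatorname{rk}R_i=\operatorname{dim}\Phi_{P_i}$. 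At each $\widetilde P_j$ the twist is regular, and the push-forward of the regular holonomic germ there contributes a regular module of rank $\operatorname{dim}\Phi_{\widetilde P_j}$ for $\operatorname{DR}(\mathcal M)$; summing over $j\in J$ gives $\operatorname{rk}R_0=\sum_{j\in J}\operatorname{dim}\Phi_{\widetilde P_j}$. Reassembling yields $\hat{\mathcal N}\cong R_0\oplus\bigoplus_{i\in I}\bigl(R_i\otimes\mathcal E^{\psi_i(t)}\bigr)$ with the stated ranks.

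The hard part is the localisation step: one must prove that the formal germ at $0$ genuinely decomposes as the direct sum of the finitely many local contributions indexed by the $P_i$ and the $\widetilde P_j$ --- i.e.\ that the off-critical part of the push-forward is purely regular and does not interact with the exponential summands --- which rests on the compatibility of $\mathbf R(p\circ e)_{\ast}$ with the stratification of $\overline D$ and on the good formal structure of $e^{+}(\mathcal M\otimes\mathcal E^q)$; and one must match the residual push-forward at each $P_i$ precisely with a known local Fourier transform, so that its regular part has exactly rank $\operatorname{dim}\Phi_{P_i}$, and verify that the iterated blow-ups needed to separate the $S_i$ from $\Delta\times\{\infty\}$ and from one another change the phase and the connection without changing this vanishing-cycle count. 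The remaining work --- normal forms in the blow-up charts and the explicit one-variable Fourier transforms --- is then routine.
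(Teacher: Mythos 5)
The paper does not prove this statement from first principles: its proof is a two-line appeal to Roucairol's theorem (\cite{Rou}, Thm 1), observing that the decomposition and the formula for $\operatorname{rk}(R_i)$ are exactly that theorem applied to the present situation, and that the only genuinely new claim, $\operatorname{rk}(R_0)=\sum_{j\in J}\dim\Phi_{\widetilde P_j}$, follows ``by the same arguments'' as in Roucairol's proof. Your proposal instead attempts to reconstruct the internal mechanism of the cited result. The strategy you describe --- resolve by point blow-ups over $(0,\infty)$ to obtain a good formal structure, use $e_+e^+\cong\mathrm{id}$ to replace $p$ by $p\circ e$, factor the pull-back of $\mathcal E^{\psi_i}$ out of the contribution near $P_i$, identify the residual germ with a local Fourier transform whose regular part has rank $\dim\Phi_{P_i}$ by stationary phase, and collect the regular contributions of rank $\dim\Phi_{\widetilde P_j}$ at the $\widetilde P_j$ into $R_0$ --- is a faithful outline of how Roucairol and Sabbah actually argue, and it correctly locates the source of the exponential factors $\psi_i(t)=\mu_i(t)t^{-q_i}$ as the value of $x$ along $S_i$ under Assumption \ref{Ass 1}.

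As a proof, however, it is incomplete, and you say so yourself: the assertion that the formal germ at $0$ splits as a direct sum of local contributions at the $P_i$ and $\widetilde P_j$, with no interaction between the exponential summands and the off-critical part, is precisely the content of the theorem, and your ``Mayer--Vietoris / excision'' step is named but not carried out. Likewise the identification of the residual push-forward at each $P_i$ with a one-variable local Fourier transform, and the verification that the blow-ups do not alter the vanishing-cycle count, are flagged as ``the hard part'' rather than established. If you intend to follow the paper, the efficient route is to check that Assumptions \ref{Ass 1} and \ref{Ass 2} place you in the hypotheses of Roucairol's theorem and then supply only the additional argument for $\operatorname{rk}(R_0)$ (which your sketch of the $\widetilde P_j$-contributions gestures at, but again only at the level of plausibility); if you intend an independent proof, the localisation step must actually be proved.
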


\begin{proof}
The formal decomposition and the statement about the rank of the $R_i$ is exactly Roucairol's theorem applied to our given situation. With the same arguments as in Roucairol's proof one can also show that $rk \left( R_0\right)= \sum_{j \in J} \operatorname{dim} \:  { \Phi_{\widetilde P_j}}$.
\end{proof}

Identify $ \mathbb S^1= \left\{ \vartheta \mid \vartheta \in [0, 2\pi) \right\}$ and denote $\mathcal P:= x^{-1} \mathbb C[x^{-1}]$. Let us recall the following definition of a Stokes-filtered local system.
\begin{definition}[\cite{Sab}, Lemma 2.7] \label{SFLS}
Let $\mathcal L$ be a local system of $\mathbb C$-vector spaces on $\mathbb  S^1$. We will call $ \left( \mathcal L, \mathcal L_{\leq}\right)$ a \emph{Stokes-filtered local system}, if it is equipped with a family of subsheaves $\mathcal L_{\leq \phi}$ (indexed by $ \phi \in \mathcal P:= x^{-1} \mathbb C[x^{-1}]$) satisfying the following conditions: 
	\begin{enumerate}
		\item For all $\vartheta \in  \mathbb S^1$, the germs $\mathcal L_{\leq \phi, \vartheta}$ form an exhaustive increasing filtration of $\mathcal L_{\vartheta}$
		\item $gr_{\phi} \mathcal L:= \mathcal L_{\leq \phi} / \mathcal L_{< \phi}$ is a local system on $ \mathbb S^1$ (where $\mathcal L_{< \phi, \vartheta}:= \sum_{\psi 					<_{\vartheta}\phi} \mathcal L_{\leq \psi, \vartheta}$)
		\item $\operatorname{dim} \: \mathcal L_{\leq \phi, \vartheta}= \sum_{\psi \leq_{\vartheta} \phi} \operatorname{dim} \: gr_{\psi} \mathcal L_{\vartheta}$ 
	\end{enumerate}
\end{definition}

Assume that $\mathcal M$ (and therefore the direct image $\mathcal N$ defined above) is a meromorphic connection. Let $\mathcal L'$ denote the local system on $\Delta^{\ast}$ corresponding to the meromorphic connection $\mathcal N$. Moreover let $\pi: \widetilde{\Delta} \to \Delta, (r, e^{i\vartheta}) \mapsto r \cdot e ^{i \vartheta}$ be the real oriented blow up of $\Delta$ in the singularity $0$ and $ j: \Delta^{\ast} \hookrightarrow \widetilde \Delta$. Consider $j_{\ast} \mathcal L'$ and restrict it to the boundary $\partial \Delta$. We get a local system on $\mathbb S^1 \cong \partial \widetilde{\Delta}$ and define  $\mathcal L := j_{\ast} \mathcal L'_{\mid \mathbb S^1}$.
 For $\phi \in\mathcal P$ define
 \[ \mathcal L_{\leq \phi} := \mathcal H^0 \: \operatorname{DR}^{mod \: 0}_{\partial \widetilde {\Delta}} (\mathcal N \otimes \mathcal E^{-\phi}) \:  \text{ and } \:  \mathcal L_{<\phi} := \mathcal H^0 \: \operatorname{DR}^{< 0}_{\partial \widetilde {\Delta}} (\mathcal N \otimes \mathcal E^{-\phi}),\]
where $\operatorname{DR}^{mod \: 0}$ resp. $\operatorname{DR}^{< 0}$ denotes the moderate (resp.  rapid decay) de Rham complex. 
According to the equivalence of categories between germs of a $\mathcal O_{\Delta}(\ast 0)$-connection and Stokes-filtered local systems stated by Deligne/Malgrange (cf. \cite{Mal91}),  $(\mathcal L, \mathcal L_{\leq})$ forms the Stokes-filtered local system associated to $\mathcal N$. \\
 Moreover by the Hukuhara-Turrittin-Theorem (cf. \cite{Sab07}, p. 109) the formal decomposition of $\mathcal N$ can be lifted locally on sectors to  $\partial \widetilde{\Delta}=\mathbb S^1$. Thus to determine the filtration $\mathcal L_{\leq}$,  it is enough to consider the set of exponential factors (respectively their polar part) 
 appearing in the formal decomposition, since the moderate growth property of the solutions of an elementary connection $\mathcal R \otimes \mathcal E^{\phi}$ ($\phi \in \mathcal P$) only depends on the asymptotical behavior of $e^{\phi}$. We denote the set of exponential factors of  the formal decomposition of $\mathcal N$ by $\mathcal P_{\mathcal N} := \{ \psi_i  \mid i \in I_0\}$ whereby $I_0:= I \cup \{ 0\}$ and $\psi_0 := 0$. 

\begin {definition}
For $\vartheta \in \mathbb S^1$ we define the following ordering on $\mathcal P$: $\phi \leq_{\vartheta} \psi :\Leftrightarrow e^{\phi-\psi} \in \mathcal A^{mod \: 0}$.
\end{definition}

\begin{remark} \label{remark1} 
	\begin{enumerate}
		\item For $ \psi_i, \psi_j \in \mathcal P_{\mathcal N}$ appearing in the formal decomposition of $\mathcal N$ we can determine $\vartheta \in \mathbb S^1$, such that $				\psi_i \leq_{\vartheta} \psi_j$ (cf.\cite{Sab}, ex. 1.6): 
			\[ \psi_i \leq_{\vartheta} \psi_j \Leftrightarrow \psi_i - \psi_j \leq_{\vartheta} 0 \Leftrightarrow \mu_i \left( t\right) t^{-q_i} - \mu_j \left( t\right) t^{-q_j} \leq_{\vartheta} 0. \] 
			There exists a finite set of $\vartheta \in \mathbb S^1$, where $\psi_i$ and $\psi_j$ are not comparable, i.\,e. neither $ \psi_i \leq_{\vartheta} \psi_j$ nor $\psi_j 				\leq_{\vartheta} \psi_i$ holds. We call these angles the \emph{Stokes directions} of $(\psi_i, \psi_j)$. 
		\item For $\vartheta_0 \in \mathbb S^1$ not being a Stokes direction of any pair $ \left( \psi_i, \psi_j\right)$ in $\mathcal P_{\mathcal N}$ we get a total ordering of the $				\psi_i$s with respect to $\vartheta_0$: 
			$\psi_{\tilde0} <_{\vartheta_0} \ldots <_{\vartheta_0} \psi_{\tilde n}.$
	\end{enumerate}
\end{remark}

\begin{corollary} \label{cor1} For $\vartheta \in \mathbb S^1 $ we get the following statement: 
	\begin{enumerate}
		\item $\psi=0$: \[\operatorname{dim}  \left(  \mathcal L_{\leq 0} \right)_{\vartheta} = \sum_{j \in J} \Phi_{\widetilde P_j} +  \sum_{\{i \mid \psi_i \leq_{\vartheta} 0\}} \Phi_{P_i} \]
		\item $\psi \neq 0$, $\vartheta \in  \left(   \frac {\frac {\pi} {2}+ arg  \left( -\mu \left( 0\right)\right)} {q}, \frac {\frac {3\pi} {2}+ arg  \left( -\mu \left( 0\right)\right)} {q}\right) \: mod \: 			\frac {2\pi}{q}$: 
			\[\operatorname{dim}  \left( \mathcal L_{\leq \psi} \right)_{\vartheta}=  \sum_{j\in J} \Phi_{\widetilde P_j} +  \sum_{\{i \mid \psi_i \leq_{\vartheta} \psi\}} \Phi_{P_i} \]
		\item $\psi \neq 0$, $\vartheta \in  \left(  \frac {-\frac {\pi} {2}+ arg  \left( -\mu \left( 0\right)\right)} {q}, \frac {\frac {\pi} {2}+ arg  \left( -\mu \left( 0\right)\right)} {q}\right) \: mod \: \frac 			{2\pi}{q}$: 
			\[ \operatorname{dim}  \left( \mathcal L_{\leq \psi} \right)_{\vartheta}=  \sum_{\{i \mid \psi_i \leq_{\vartheta} \psi\}} \Phi_{P_i} \]
	\end{enumerate}
\end{corollary}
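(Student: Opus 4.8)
The plan is to compute $\dim(\mathcal L_{\leq\psi})_\vartheta$ from axiom~(3) of Definition~\ref{SFLS} together with Roucairol's formal decomposition (Theorem~\ref{form_dec}), and then to make the order relation $\leq_\vartheta$ explicit in terms of $\vartheta$, $q$ and $\arg(-\mu(0))$. Since $(\mathcal L,\mathcal L_{\leq})$ is the Stokes-filtered local system attached to $\mathcal N$, axiom~(3) yields, for every $\vartheta\in\mathbb S^1$,
\[
\dim(\mathcal L_{\leq\psi})_\vartheta=\sum_{\phi\leq_\vartheta\psi}\dim(gr_\phi\mathcal L)_\vartheta .
\]
By the Hukuhara--Turrittin theorem the formal decomposition of $\mathcal N$ lifts over small sectors of $\partial\widetilde\Delta$, so $gr_\phi\mathcal L=0$ unless $\phi\in\mathcal P_{\mathcal N}=\{\psi_i\mid i\in I_0\}$, and over such a sector $gr_{\psi_i}\mathcal L$ is the local system underlying the regular part $R_i$ of Theorem~\ref{form_dec}. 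Hence $\dim(gr_{\psi_i}\mathcal L)_\vartheta=rk(R_i)$, which equals $\dim\Phi_{P_i}$ for $i\in I$ and $\sum_{j\in J}\dim\Phi_{\widetilde P_j}$ for $i=0$, so everything reduces to the identity
\[
\dim(\mathcal L_{\leq\psi})_\vartheta=\sum_{\{i\in I_0\mid\psi_i\leq_\vartheta\psi\}}rk(R_i)
\]
together with the determination of the index set on the right.

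Next I would split off the index $i=0$. For $\psi=0$ one always has $\psi_0=0\leq_\vartheta 0$, so $i=0$ always contributes the term $\sum_{j\in J}\dim\Phi_{\widetilde P_j}$; the remaining contribution is $\sum_{\{i\mid\psi_i\leq_\vartheta 0\}}\dim\Phi_{P_i}$, which gives~(1). For $\psi=\psi_k\neq0$ the sum $\sum_{\{i\in I\mid\psi_i\leq_\vartheta\psi_k\}}\dim\Phi_{P_i}$ always appears, and it only remains to decide whether the index $i=0$ is present, i.e.\ whether $0\leq_\vartheta\psi_k$. By definition this means $e^{-\psi_k}\in\mathcal A^{mod\: 0}$ along the ray $\arg t=\vartheta$; writing $\psi_k(t)=\mu_k(t)t^{-q_k}$ and $t=re^{i\vartheta}$, the leading term of $-\psi_k$ is $-\mu_k(0)\,r^{-q_k}e^{-iq_k\vartheta}$, whose real part, as $r\to0^+$, has the sign of $\cos(\arg(-\mu_k(0))-q_k\vartheta)$ and tends to $\pm\infty$. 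A short trigonometric computation identifies the locus where this cosine is strictly negative with the open sector appearing in~(2) --- there $0<_\vartheta\psi_k$, so $i=0$ enters and we obtain~(2) --- and the locus where it is strictly positive with the open sector appearing in~(3) --- there $e^{-\psi_k}$ has rapid growth, so $0\not\leq_\vartheta\psi_k$, the index $i=0$ drops out, and we obtain~(3).

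The step I expect to require the most care is the identification $\dim(gr_{\psi_i}\mathcal L)_\vartheta=rk(R_i)$: one must verify that the sectorial lifting of the formal decomposition provided by Hukuhara--Turrittin is compatible with the graded pieces of the Stokes filtration $\mathcal L_{\leq}$ defined through $\operatorname{DR}^{mod\: 0}$, so that the regular summands $R_i$ genuinely are the graded pieces of $\mathcal L$. Granting this, what remains is the bookkeeping of the total ordering of Remark~\ref{remark1} and the elementary determination of the moderate-growth sectors above; the Stokes directions themselves are irrelevant here since the statement only involves open intervals.
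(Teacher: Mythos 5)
Your proposal is correct and follows essentially the same route as the paper: the paper's proof likewise reads the dimensions off Roucairol's formal decomposition (using $rk(R_i)=\dim\Phi_{P_i}$ and $rk(R_0)=\sum_j\dim\Phi_{\widetilde P_j}$) and then reduces everything to the single trigonometric computation of when $0\leq_{\vartheta}\psi$, i.e.\ when $\arg(-\mu(0))-q\vartheta\in(\pi/2,3\pi/2)$, which matches your cosine criterion. The compatibility of the graded pieces with the formal decomposition, which you rightly flag as the delicate point, is exactly what the paper dispenses with by appealing to Hukuhara--Turrittin and the Deligne--Malgrange correspondence in the paragraph preceding the corollary.
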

	
\begin{proof}
This is a direct consequence of the formal decomposition and the fact that for $\psi \neq 0$ we have
	\begin{align*}
		0 \leq_{\vartheta} \psi  &\Leftrightarrow  - \mu \left( t\right) t^{-q} \leq_{\vartheta} 0 \Leftrightarrow arg \left( -\mu \left( 0\right)\right) -q \vartheta \in \left(  \frac {\pi}{2}, \frac {3 \pi}		{2}\right) \\
		&\Leftrightarrow \vartheta \in  \left(  \frac {\frac {\pi} {2}+ arg  \left( -\mu \left( 0\right)\right)} {q}, \frac {\frac {3\pi} {2}+ arg  \left( -\mu \left( 0\right)\right)} {q}\right) \: mod \: \frac 		{2\pi}{q}
	\end{align*}
\end{proof}


\subsection{Topological description of the stalks}

As before we assume that $\mathcal M$ is a meromorphic connection with regular singularities along its divisor. 
Our aim is to determine the Stokes structure of $\mathcal N$  by using a topological point of view,  i.\:e. we will develop a topological description of the Stokes-filtered local system $(\mathcal L, \mathcal L_{\leq})$. Therefore we will use the following theorem:

\begin{theorem}[ \cite{Moc14}, Cor. 4.7.5]\label{theorem1} There is an isomorphism
$$\Omega: \mathcal L_{\leq \psi} := \mathcal H^0 \operatorname{DR}^{mod\: 0}_{ \widetilde{\Delta}}  \left( \mathcal N \otimes \mathcal E^{-\psi}\right) \rightarrow  \widetilde {\mathcal L}_{\leq \psi}:= \mathcal H^1 R \widetilde p_{\ast} \operatorname{DR}^{mod \: D}_{\widetilde X(D)}  \left( \mathcal M \otimes  \mathcal E^{\frac 1 y}\otimes \mathcal E^{-\psi}\right)$$
\end{theorem}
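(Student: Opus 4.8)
\emph{Proof strategy.} The plan is to read this off from Mochizuki's general compatibility between the pushforward of holonomic $\mathcal D$-modules and the moderate de Rham functor on the real oriented blow-up (\cite{Moc14}, Cor.~4.7.5) --- morally the statement that ``Riemann--Hilbert commutes with proper pushforward'', refined to the level of the Stokes filtration --- and then to rewrite both sides so that they acquire the stated shape. The general input I would invoke is: for a projective morphism $f\colon Y\to Z$, a normal crossing divisor $D_Z\subset Z$, a normal crossing divisor $D_Y\supset f^{-1}(D_Z)$, and a good meromorphic flat bundle $\mathcal M'$ on $(Y,D_Y)$, there is a canonical isomorphism, compatible with the Stokes filtrations indexed by exponential factors,
\[ \operatorname{DR}^{mod\: D_Z}_{\widetilde Z(D_Z)}\bigl(f_+\mathcal M'\bigr)\;\cong\; R\widetilde f_{\ast}\,\operatorname{DR}^{mod\: D_Y}_{\widetilde Y(D_Y)}\bigl(\mathcal M'\bigr), \]
where $\widetilde f\colon \widetilde Y(D_Y)\to \widetilde Z(D_Z)$ is the induced map of real blow-ups; the analogous statement holds for the rapid decay de Rham complex.

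I would apply this with $Y=X=\Delta\times\mathbb P^1$, $Z=\Delta$, $f=p$ (proper, since $\mathbb P^1$ is compact), $D_Z=\{0\}$, $D_Y$ the normal crossing divisor obtained from $D\cup(\Delta\times\{\infty\})$ (the divisor $D$ together with the polar locus of $\mathcal E^{1/y}=\mathcal E^{q}$), and $\mathcal M'=\mathcal M\otimes\mathcal E^{1/y}\otimes\mathcal E^{-\psi}$, so that $\widetilde f$ specializes to $\widetilde p$. By the projection formula $p_+(\mathcal M\otimes\mathcal E^{q}\otimes p^{\ast}\mathcal E^{-\psi})=p_+(\mathcal M\otimes\mathcal E^{q})\otimes\mathcal E^{-\psi}$, so the left-hand side becomes $\operatorname{DR}^{mod\:0}_{\widetilde\Delta}\bigl(p_+(\mathcal M\otimes\mathcal E^{q})\otimes\mathcal E^{-\psi}\bigr)$. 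Now $p_+(\mathcal M\otimes\mathcal E^{q})$ has cohomology only in degrees $-1$ and $0$, with $\mathcal H^{-1}$ a skyscraper at $0$ (a finite sum of copies of $\delta_0$, cf.~\cite{Sab2}) and $\mathcal H^0$ whose germ is $\mathcal N$; since $t$ is invertible on $\widetilde{\mathcal A}^{mod\:0}_{\widetilde\Delta}$, the functor $\operatorname{DR}^{mod\:0}_{\widetilde\Delta}$ annihilates $\mathcal O$-torsion modules, so the $\mathcal H^{-1}$-term drops out and the left-hand side is $\operatorname{DR}^{mod\:0}_{\widetilde\Delta}(\mathcal N\otimes\mathcal E^{-\psi})$, whose $\mathcal H^0$ is $\mathcal L_{\leq\psi}$ by definition. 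On the right-hand side $\operatorname{DR}^{mod\:D}_{\widetilde X(D)}(\mathcal M')$ sits in degrees $0,1,2$, and the relative-dimension shift relating $\operatorname{DR}^{mod\:0}_{\widetilde\Delta}\circ p_+$ with $R\widetilde p_{\ast}\circ\operatorname{DR}^{mod\:D}_{\widetilde X(D)}$ (relative dimension $1$) is exactly what turns $\mathcal H^0$ on the base into $\mathcal H^1R\widetilde p_{\ast}$ on the total space; passing to $\mathcal H^0$, resp.~$\mathcal H^1$, of the displayed isomorphism then produces $\Omega$.

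The step I expect to be the real obstacle is checking the \emph{goodness} hypothesis for $\mathcal M'=\mathcal M\otimes\mathcal E^{1/y}\otimes\mathcal E^{-\psi}$ at the point $(0,\infty)$: the exponential factors $0$, $\mu_i(t)t^{-q_i}$, $1/y$, $-\psi$ and their pairwise differences do not have a normal crossing polar structure on $X$ itself. This is where Assumptions~\ref{Ass 1} and~\ref{Ass 2} are used. Following Roucairol's analysis (\cite{Rou}, and Theorem~\ref{form_dec}), I would perform the sequence of point blow-ups $e\colon X'\to X$ that separates these factors, verify that $e^+(\mathcal M\otimes\mathcal E^{1/y})$ becomes good along the pulled-back normal crossing divisor, apply \cite{Moc14} on $X'$, and descend along $e$ using the (regular, hence elementary) instance of the same compatibility for $e_+$, together with the fact that the real oriented blow-up and $R\widetilde p_{\ast}$ are insensitive to such modifications above the center. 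Goodness along $\Delta\times\{\infty\}$ away from $(0,\infty)$, and along the components $\widetilde S_j$, is immediate from regularity of $\mathcal M$ and Assumption~\ref{Ass 2}. Finally, that $\Omega$ respects the filtrations $\mathcal L_{\leq\psi}\subset\mathcal L$ and not merely the underlying local systems is built into Mochizuki's statement, which is formulated in the category of Stokes-filtered objects; so once the hypotheses above are secured there is nothing further to check.
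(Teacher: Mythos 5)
Your proposal takes the same route as the paper, which offers no proof of this statement at all: it simply records that the isomorphism is a special case of Mochizuki's Cor.~4.7.5, and the supporting reductions you sketch (comparison of the various real blow-up spaces, resolution to achieve goodness, vanishing of the torsion $\mathcal H^{-1}$-term) are exactly the ones the paper develops separately in Propositions~2.8--2.9, Lemma~2.12 and the remark that $\mathcal H^{-1}p_+(\mathcal M\otimes\mathcal E^q)$ is supported at the origin. Your write-up is consistent with, and somewhat more explicit than, the paper's treatment.
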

Here  $\widetilde X(D)$ denotes the real-oriented blow up of $X$ along the divisor component $D$, $\operatorname{DR}^{mod \: D}(\mathcal M)$ denotes the moderate de Rham complex of a meromorphic connection $\mathcal M$ on $X$ 
and $\widetilde p: \widetilde X(D) \to \widetilde{\Delta}$ corresponds to the projection $p$ in the real- oriented blow up space $\widetilde X(D)$ along $D$.\\

Theorem \ref{theorem1} is a special case of \cite{Moc14}, Cor. 4.7.5. In the following we will develop a topological view of the right hand side of the above isomorphism, which enables us to describe the Stokes-filtered local system $(\mathcal L, \mathcal L_{\leq})$ more explicitly.\\ 
First let us examine the behavior of ${\widetilde {\mathcal L}}_{\leq \psi}$ with respect to birational maps. 

\begin{proposition} Let $e: Z \to \Delta \times \mathbb P^1$ a birational map (i.e. a sequence of point blow-ups), $g \left( t,y\right) := \frac 1 y - \psi \left( t\right)$, $D_Z= e^{-1}  \left( D\right)$. Then: 
\[ \operatorname{DR}^{mod \: D}_{\widetilde {X} \left( D\right)} \left(  \mathcal M \otimes \mathcal E^{g \left( t,y\right)}\right) \cong R \tilde e_{\ast} \operatorname{DR}^{mod \: D_Z}_{\widetilde {Z}  \left( D_Z\right)} \left( e^+ \mathcal M \otimes \mathcal E^{g \left( t,y\right) \circ e}\right)\]
\end{proposition}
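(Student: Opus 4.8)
The plan is to reduce the statement to the standard projection-formula / push-forward compatibility of the moderate de Rham functor under proper modifications, applied on the real-oriented blow-up spaces. First I would set up the commutative diagram of real-oriented blow-ups: the birational map $e\colon Z\to X=\Delta\times\mathbb P^1$ is a composition of point blow-ups, and since $D_Z=e^{-1}(D)$, it lifts to a proper map $\tilde e\colon \widetilde Z(D_Z)\to\widetilde X(D)$ fitting into a commutative square with the blow-down maps $\varpi_Z\colon\widetilde Z(D_Z)\to Z$ and $\varpi_X\colon\widetilde X(D)\to X$. The key inputs are: (i) $\mathcal M\otimes\mathcal E^{g(t,y)}$ is a meromorphic connection on $X$ with poles along $D$ (and along the other components of $\overline D$, but those are harmless here since we only take $\operatorname{DR}^{\mathrm{mod}\,D}$), and $e^+(\mathcal M\otimes\mathcal E^{g})=e^+\mathcal M\otimes\mathcal E^{g\circ e}$ because pulling back the exponential twist is just composing the exponent with $e$; (ii) for a proper modification $e$, one has the base-change/projection isomorphism
\[
 \operatorname{DR}^{\mathrm{mod}\,D}_{\widetilde X(D)}\!\big(\mathcal E^{g}\otimes\mathcal M\big)\;\cong\; R\tilde e_{\ast}\,\operatorname{DR}^{\mathrm{mod}\,D_Z}_{\widetilde Z(D_Z)}\!\big(e^+\mathcal M\otimes\mathcal E^{g\circ e}\big),
\]
which is exactly the content of the relative Poincaré lemma / compatibility of the moderate de Rham complex with proper push-forward in the real-blow-up setting (this is in Mochizuki's and Sabbah's formalism, e.g. the results quoted as Theorem~\ref{theorem1} above stem from the same circle of ideas).

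The key steps, in order, would be: (1) verify that $e$ induces a well-defined proper map $\tilde e$ on the real-oriented blow-ups compatible with the blow-downs — here one uses that $e$ is an isomorphism away from $D$ and that $D_Z=e^{-1}(D)$, so the boundary $\partial\widetilde Z(D_Z)$ maps to $\partial\widetilde X(D)$; (2) identify $e^+(\mathcal M\otimes\mathcal E^{g(t,y)})$ with $e^+\mathcal M\otimes\mathcal E^{g(t,y)\circ e}$, which is formal since $\mathcal E^g$ is a rank-one connection $(\mathcal O_X(\ast D),\mathrm d+\mathrm dg)$ and pullback of connections is compatible with tensor products; (3) apply the compatibility of $\operatorname{DR}^{\mathrm{mod}}$ with proper direct images: since $e\colon Z\to X$ is proper and an isomorphism outside $D$, and $D_Z = e^{-1}(D)$, the natural morphism $\operatorname{DR}^{\mathrm{mod}\,D}_{\widetilde X(D)}(\mathcal E^g\otimes\mathcal M)\to R\tilde e_{\ast}\operatorname{DR}^{\mathrm{mod}\,D_Z}_{\widetilde Z(D_Z)}(e^+(\mathcal E^g\otimes\mathcal M))$ is an isomorphism; (4) combine (2) and (3) to obtain the claimed formula. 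Step (3) can either be invoked as a black box from Mochizuki or, if one wants a self-contained argument, checked stalkwise on $\partial\widetilde X(D)$ by a moderate-growth relative Poincaré lemma along the fibres of $\tilde e$ (the fibres being real-blow-ups of the exceptional divisors, which are contractible in the relevant sense, so the higher moderate cohomology along them vanishes and the zeroth is as expected).

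I expect the main obstacle to be step (3), namely making precise and justifying the compatibility of the moderate de Rham complex with the proper modification $e$ on the real-blow-up spaces — in particular checking that the moderate-growth conditions match up under $\tilde e$. The subtlety is that "moderate growth along $D$" upstairs on $\widetilde Z(D_Z)$ must correspond, after pushing forward, to "moderate growth along $D$" downstairs on $\widetilde X(D)$; this hinges on $D_Z$ being exactly the total transform $e^{-1}(D)$ (so no growth conditions are lost or gained), together with the fact that $e$ being a composition of point blow-ups means the map on real-blow-ups has fibres over boundary points that are finite unions of circles/segments glued along the exceptional locus, along which the relative moderate Poincaré lemma applies. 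Once this local statement on fibres is in hand, the global isomorphism follows by a standard argument (e.g. comparing on a basis of open sets of $\partial\widetilde X(D)$ and using proper base change for $R\tilde e_\ast$). A secondary, more bookkeeping-level point is to make sure the other components of the singular locus $\overline D$ (the $S_i$ and $\widetilde S_j$) do not interfere: since we only impose the moderate condition along $D$ and treat everything else as a plain meromorphic connection, these components play no role beyond being preserved set-theoretically by $e$, so no extra care is needed there.
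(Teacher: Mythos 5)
Your proposal is correct and follows essentially the same route as the paper: the paper notes that $e$ is proper and that $\mathcal M\otimes\mathcal E^{g(t,y)}$ is localized along $D$, so that $e_+\bigl(e^+\mathcal M\otimes\mathcal E^{g(t,y)\circ e}\bigr)\cong\mathcal M\otimes\mathcal E^{g(t,y)}$, and then cites the compatibility of $\operatorname{DR}^{mod}$ with proper push-forward (Sabbah, \emph{Introduction to Stokes structures}, Prop.~8.9) -- exactly the content of your steps (1)--(4). The only cosmetic difference is that the paper phrases the reduction through $e_+e^+\cong\operatorname{id}$ rather than through the natural morphism $\operatorname{DR}^{mod\:D}\to R\tilde e_\ast\operatorname{DR}^{mod\:D_Z}e^+$, and treats your step (3) as a direct citation rather than re-proving it fibrewise.
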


\begin{proof}
 We know that $e$ is proper. Furthermore we assumed that $\mathcal M$ is a meromorphic connection with regular singularities along $D$, i.e. particularly that $\mathcal M \otimes \mathcal E^{g \left( t,y\right)}$ is a holonomic $\mathcal D_X$-module and localized along $D$. Thus, using the fact that $e_+  \left( e^+ \mathcal M \otimes \mathcal E^{g \left( t,y\right) \circ e}\right) \cong  \mathcal M \otimes \mathcal E^{g \left( t,y\right)}$, we can apply \cite{Sab}, Prop. 8.9: 
	\begin{align*}
		\operatorname{DR}^{mod \: D} \left(  \mathcal M \otimes \mathcal E^{g \left( t,y\right)}\right)& \cong \operatorname{DR}^{mod \: D} \left( e_+ \left( e^+ \mathcal M \otimes 		\mathcal E^{g \left( t,y\right) \circ e}\right)\right) \\
                   & \cong R\tilde e_{\ast} \operatorname{DR}^{mod \: D_Z} \left( e^+ \mathcal M \otimes \mathcal E^{g \left( t,y\right) \circ e}\right)
	\end{align*}
\end{proof}
\begin{proposition}\label{prop1} Denote $D':= D \cup  \left( \Delta \times \{ \infty\}\right) $. Let $D'_Z:= e^{-1} \left( D'\right)$ and $\overline D_Z:= e^{-1}( \overline D)$. If $\overline D_Z$ is a normal crossing divisor, we have isomorphisms
\[R \tilde e_{\ast} \operatorname{DR}^{mod \: D_Z}_{\widetilde {Z}  \left( D_Z\right)} \left( e^+ \mathcal M \otimes \mathcal E^{g \left( t,y\right) \circ e}\right)\cong R \tilde e_{\ast} \operatorname{DR}^{mod \: D'_Z}_{\widetilde {Z}  \left( D'_Z\right)} \left( e^+ \mathcal M \otimes \mathcal E^{g \left( t,y\right) \circ e}\right) \]
and 
\[R \tilde e_{\ast} \operatorname{DR}^{mod \: D_Z}_{\widetilde {Z}  \left( D_Z\right)} \left( e^+ \mathcal M \otimes \mathcal E^{g \left( t,y\right) \circ e}\right)\cong R \tilde e_{\ast} \operatorname{DR}^{mod \: \overline D_Z}_{\widetilde {Z}  \left( \overline D_Z\right)} \left( e^+ \mathcal M \otimes \mathcal E^{g \left( t,y\right) \circ e}\right) \]
where $\tilde e$ denotes the induced map in the particular blow up spaces.
\end{proposition}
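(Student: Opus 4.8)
\emph{Proof strategy.} Both displayed isomorphisms express the same fact: enlarging the divisor along which moderate growth is permitted, from $D_Z$ to $D'_Z$ resp.\ to $\overline D_Z$, does not change $R\tilde e_\ast$ of the moderate de Rham complex of $e^+\mathcal M\otimes\mathcal E^{g(t,y)\circ e}$. The plan is to reduce each one to the following local assertion and then to invoke the standard comparison between the moderate de Rham complex on a real-oriented blow up and the meromorphic de Rham complex below it: if $E_0\subseteq\overline D_Z$ is a component not contained in $D_Z$ and $\pi'\colon\widetilde Z(D_Z\cup E_0)\to\widetilde Z(D_Z)$ is the further real-oriented blow up along $E_0$, then
\[R\pi'_\ast\operatorname{DR}^{mod\: D_Z\cup E_0}_{\widetilde Z(D_Z\cup E_0)}\bigl(e^+\mathcal M\otimes\mathcal E^{g\circ e}\bigr)\;\cong\;\operatorname{DR}^{mod\: D_Z}_{\widetilde Z(D_Z)}\bigl(e^+\mathcal M\otimes\mathcal E^{g\circ e}\bigr).\]
Away from the strict transform of $E_0$ the two sheaves of coefficients coincide, so this is a statement localized near $E_0$ and near the points where $E_0$ meets the rest of $\overline D_Z$. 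Since $\tilde e$ is proper and an isomorphism off finitely many fibres, proper base change and a spectral-sequence argument reduce it to a statement fibrewise over the real-oriented blow up of $X$, and both displayed isomorphisms then follow by applying this reduction to each component of $\overline D_Z$ not contained in $D_Z$ in turn.

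For the local assertion I would use that $\overline D_Z$ is normal crossing, together with the Hukuhara--Turrittin theorem and the Mochizuki--Kedlaya local structure theorem for good meromorphic connections: near any point of $\overline D_Z$ the twisted connection $e^+\mathcal M\otimes\mathcal E^{g\circ e}$ is, after a ramified covering, asymptotically a finite direct sum of elementary connections $\mathcal R_\alpha\otimes\mathcal E^{\varphi_\alpha}$ with $\mathcal R_\alpha$ regular singular, so that the moderate de Rham complex is controlled by the growth of the $e^{\varphi_\alpha}$ and the computation reduces to the elementary case. In the elementary case it is classical that the meromorphic de Rham complex of $\mathcal R_\alpha\otimes\mathcal E^{\varphi_\alpha}$ along $E_0$ agrees with the pushforward of its moderate de Rham complex from the real blow up along $E_0$ -- the same mechanism as in \cite{Sab}, Prop.~8.9, already used above. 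I would then distinguish the three types of extra component. Along the strict transform of $\Delta\times\{\infty\}$ the function $g\circ e=\tfrac1y-\psi(t)$ has a first-order pole coming from $1/y$ while $\psi$ remains holomorphic off $D$, so every $\varphi_\alpha$ there genuinely blows up. Along the strict transform of $S_i$ the exponent $\psi_i(t)=\mu_i(t)t^{-q_i}$ of Theorem~\ref{form_dec} is exactly the restriction of $1/y$ to $S_i$ dictated by Assumption~\ref{Ass 1}, so $g\circ e$ is holomorphic there and $e^+\mathcal M\otimes\mathcal E^{g\circ e}$ is regular singular; the same holds along $\widetilde S_j$, where $g$ restricts to a holomorphic function. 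In each case the elementary-model comparison gives the required isomorphism away from the crossing points, and at the crossing points one descends the one-variable statement through the product structure of the normal crossing model by a K\"unneth/Mayer--Vietoris argument.

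The part I expect to be the main obstacle is exactly these crossing points -- above $(0,\infty)$, where $D$, $\Delta\times\{\infty\}$ and the strict transforms of the $S_i$ (together with the exceptional divisors of $e$) all come together, and above the points $\widetilde S_j\cap D$ -- because there the comparison cannot be read off one component at a time, several components with different growth behaviour of the $\varphi_\alpha$ meet simultaneously, and the fibres of $\tilde e$ over those points are chains of $\mathbb P^1$'s rather than isolated points. One has to carry out the multivariable moderate-growth analysis on the corner charts of the real-oriented blow up, stratify the product of a circle with the corner by the growth type of the $e^{\varphi_\alpha}$, check compatibility of the comparison morphism with the local decomposition into elementary pieces and with $R\tilde e_\ast$ along these positive-dimensional fibres, and verify that the a priori non-vanishing difference complex supported over the exceptional locus actually has vanishing cohomology there; a d\'evissage on the combinatorial complexity of the normal crossing configuration reduces this last point to the elementary model treated above.
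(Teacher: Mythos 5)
Your proposal is correct and follows essentially the same route as the paper: both reduce the statement to the partial real blow-down maps $\widetilde Z\left(D'_Z\right)\to\widetilde Z\left(D_Z\right)$ and $\widetilde Z\left(\overline D_Z\right)\to\widetilde Z\left(D_Z\right)$ induced by the identity on $Z$, handling one additional divisor component at a time, and both rest on the comparison between the moderate de Rham complex upstairs and the one downstairs. The multivariable corner analysis you flag as the main obstacle is exactly what the cited variants of Prop.~8.7, Rem.~8.8 and Prop.~8.9 of \cite{Sab} already package for a normal crossing divisor and a good (or holomorphic) twist, so the paper simply invokes them rather than re-proving that local assertion.
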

\begin{proof}
Assume $\overline D_Z$ a normal crossing divisor and consider the identity map $Z \xrightarrow{Id} Z$, which obviously induces an isomorphism on $Z \setminus \overline D_Z \to Z \setminus \overline D_Z $. We obtain `partial' blow up maps 
\[ \widetilde{Id}_1: \widetilde Z \left( D'_Z\right) \to \widetilde Z \left( D_Z\right) \text{ and } \widetilde{Id}_2: \widetilde Z \left( \overline D_Z\right) \to \widetilde Z \left( D_Z\right) \]
which induce the requested isomorphisms. These are variants of Prop.\:8.9 in \cite{Sab} (see also \cite{Sab}, Prop. 8.7 and Rem. 8.8). 
\end{proof}

\begin{corollary}
\[ \mathcal H^1  \left( R \widetilde p_{\ast} \operatorname{DR}^{mod \: D} \left(  \mathcal M \otimes \mathcal E^{g \left( t,y\right)}\right)\right) \cong  \mathcal H^1  \left( R \left(  \widetilde {p\circ e}\right)_{\ast} \operatorname{DR}^{mod  \left(  \star \right)}  \left( e^+ \mathcal M \otimes \mathcal E^{g \left( t,y\right) \circ e}\right)\right) \]
where $\star = D_Z$ (resp. $D'_Z$, resp. $\overline D_Z$).
\end{corollary}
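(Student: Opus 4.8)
The plan is to deduce the statement formally from the two preceding propositions by applying the derived direct image $R\widetilde p_{\ast}$ to their isomorphisms and then passing to the first cohomology sheaf. Since the propositions are already formulated as isomorphisms in the derived category, no spectral-sequence argument is needed for the corollary itself once the relevant projections are identified.

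First I would make explicit the compatibility of the real-oriented blow-up with the composition $p\circ e$. Because the sequence of point blow-ups $e$ is an isomorphism over $X\setminus D$, and because $(p\circ e)^{-1}(0)$ is contained in each of $D_Z\subseteq D'_Z\subseteq\overline D_Z$, the induced map $\tilde e$ between the respective blow-up spaces fits into a commutative triangle with $\widetilde p$ and a well-defined projection $\widetilde{p\circ e}\colon\widetilde Z(\star)\to\widetilde\Delta$, so that $\widetilde p\circ\tilde e=\widetilde{p\circ e}$. By functoriality of the derived pushforward this gives $R\widetilde p_{\ast}\circ R\tilde e_{\ast}\cong R(\widetilde{p\circ e})_{\ast}$. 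I would then apply $R\widetilde p_{\ast}$ to the isomorphism of the first Proposition,
\[
\operatorname{DR}^{mod\: D}_{\widetilde X(D)}\bigl(\mathcal M\otimes\mathcal E^{g(t,y)}\bigr)\;\cong\;R\tilde e_{\ast}\operatorname{DR}^{mod\: D_Z}_{\widetilde Z(D_Z)}\bigl(e^+\mathcal M\otimes\mathcal E^{g(t,y)\circ e}\bigr),
\]
which combined with the previous identity yields $R\widetilde p_{\ast}\operatorname{DR}^{mod\: D}(\mathcal M\otimes\mathcal E^{g})\cong R(\widetilde{p\circ e})_{\ast}\operatorname{DR}^{mod\: D_Z}(e^+\mathcal M\otimes\mathcal E^{g\circ e})$. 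Taking $\mathcal H^1$ of both sides settles the case $\star=D_Z$.

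For $\star=D'_Z$ and $\star=\overline D_Z$ I would then compose with the isomorphisms of Proposition~\ref{prop1} — valid under its hypothesis that $\overline D_Z$ be a normal crossing divisor, which can always be arranged by enlarging $e$ through a further embedded resolution — replacing $R\tilde e_{\ast}\operatorname{DR}^{mod\: D_Z}$ by $R\tilde e_{\ast}\operatorname{DR}^{mod\: D'_Z}$, respectively by $R\tilde e_{\ast}\operatorname{DR}^{mod\:\overline D_Z}$; applying $R\widetilde p_{\ast}$ and $\mathcal H^1$ as before gives the remaining two cases. The only point that genuinely requires care is the verification that $\widetilde{p\circ e}$ is a bona fide morphism $\widetilde Z(\star)\to\widetilde\Delta$ agreeing with $\widetilde p\circ\tilde e$ — in particular that blowing up those components of $D'_Z$ (resp. $\overline D_Z$) which dominate $\Delta$ does not obstruct the factorization through $\widetilde\Delta$, since that blow-up is locally trivial over $\Delta^{\ast}$. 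Once this bookkeeping is in place, everything else is a formal consequence of the cited propositions and the functoriality of $R(-)_{\ast}$ and $\mathcal H^1$.
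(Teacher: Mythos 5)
Your proposal is correct and is essentially the argument the paper intends: the corollary is stated without proof precisely because it follows formally from the two preceding propositions via $R\widetilde p_{\ast}\circ R\tilde e_{\ast}\cong R(\widetilde{p\circ e})_{\ast}$ and taking $\mathcal H^1$. Your extra care about the factorization of $\widetilde{p\circ e}$ through $\widetilde\Delta$ and the normal crossing hypothesis is a reasonable filling-in of details the paper leaves implicit, not a different route.
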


As $(\mathcal L, \mathcal L_{\leq})$ was defined on $\partial \widetilde {\Delta}$ we will restrict our investigation to the boundaries of the relevant blow up spaces, i.\:e. we consider $\partial \widetilde{\Delta}$, $\partial \widetilde X  \left( D\right)$, $\partial \widetilde Z  \left( D_Z\right)$, $\partial \widetilde Z  \left( D'_Z\right)$ and $\partial \widetilde Z ( \overline D_Z)$. We will take a closer look at the stalks: 
\begin{lemma} \label{lem3} Let $\vartheta \in \mathbb  S^1$. There is an isomorphism
\footnotesize
	\begin{align*}
	 \left(  \mathcal H^1 \left( R  \left( \widetilde {p\circ e}\right)_{\ast} \operatorname{DR}^{mod  \left( D_Z\right)}  \left( e^+ \mathcal M \otimes \mathcal E^{g \left( t,y\right) \circ e}\right)	\right)\right)_{\vartheta} &\cong \\ 
	&\mathbb H^1 \left(  \left( \widetilde{p\circ e}\right)^{-1} \left( \vartheta\right), \operatorname{DR}^{mod  \left( D_Z\right)}  \left( e^+ \mathcal M \otimes \mathcal E^{g \left( t,y\right) 	\circ e}\right)\right)
	\end{align*}
\normalsize	
The same holds for $D'_Z$ and $\overline D_Z$ instead of $D_Z$.
\end{lemma}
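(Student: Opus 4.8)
The plan is to prove Lemma \ref{lem3} by a base-change (proper push-forward) argument applied to the fiber $(\widetilde{p\circ e})^{-1}(\vartheta)$, exactly as one does for constructible complexes on locally compact spaces. First I would observe that the stalk of a derived push-forward $R(\widetilde{p\circ e})_{\ast}\mathcal{F}$ at a point $\vartheta \in \mathbb{S}^1 = \partial\widetilde\Delta$ is computed, up to the usual proper-base-change isomorphism, by the hypercohomology of the restriction of $\mathcal{F}$ to the fiber over $\vartheta$. To make this rigorous in our setting, the key point is that the relevant spaces are (real-oriented blow-ups of) compact spaces: $\mathbb{P}^1$ is compact, the real blow-up $\widetilde Z(D_Z)$ is compact, and hence the fibers of $\widetilde{p\circ e}$ over points of $\partial\widetilde\Delta$ are compact. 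Therefore $\widetilde{p\circ e}$ is a proper map, and for a proper map proper base change gives a canonical isomorphism between the stalk $(R^k(\widetilde{p\circ e})_{\ast}\mathcal{F})_\vartheta$ and $\mathbb{H}^k$ of the fiber with coefficients in $\mathcal{F}$ restricted to that fiber.

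Concretely, the steps I would carry out are: (1) Recall that $\operatorname{DR}^{mod(D_Z)}(e^+\mathcal{M}\otimes\mathcal{E}^{g\circ e})$ is a bounded complex with $\mathbb{R}$-constructible cohomology on the (compact, real-analytic) blow-up space $\widetilde Z(D_Z)$ — this is part of the standard theory of the Deligne–Malgrange moderate de Rham functor (cf. \cite{Sab}, \cite{Moc14}), and in particular the formation of the moderate de Rham complex commutes with restriction to a fiber. (2) Verify that $\widetilde{p\circ e}\colon \widetilde Z(D_Z)\to\widetilde\Delta$ is proper, which follows since it factors through the compact spaces built from $\mathbb{P}^1$ and the properness of $e$. (3) Apply the proper base-change theorem for $\mathbb{R}$-constructible sheaves: for the Cartesian square with the inclusion $\{\vartheta\}\hookrightarrow\widetilde\Delta$ on the bottom, one gets $i_\vartheta^{\ast}R(\widetilde{p\circ e})_{\ast}\mathcal{F}\cong R\Gamma\big((\widetilde{p\circ e})^{-1}(\vartheta),\mathcal{F}|_{\text{fiber}}\big)$. (4) Take $\mathcal{H}^1$ of both sides and identify the right-hand side with $\mathbb{H}^1$ of the fiber, using that $\mathcal{F}|_{\text{fiber}}$ is again (isomorphic to) the moderate de Rham complex of the restricted data, which is where step (1)'s compatibility statement is used. (5) Note the argument is insensitive to whether one blows up along $D_Z$, $D'_Z$, or $\overline D_Z$, since in each case the blow-up space is compact over $\widetilde\Delta$ and the de Rham complex is $\mathbb{R}$-constructible; hence the same isomorphism holds verbatim with $D'_Z$ or $\overline D_Z$ in place of $D_Z$.

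The main obstacle I anticipate is not the base-change formalism itself but the compatibility in step (1): one must know that restricting the moderate de Rham complex $\operatorname{DR}^{mod(D_Z)}(e^+\mathcal{M}\otimes\mathcal{E}^{g\circ e})$ to the fiber $(\widetilde{p\circ e})^{-1}(\vartheta)$ agrees with the object whose hypercohomology appears on the right — i.e. that $\operatorname{DR}^{mod}$ behaves well under the non-characteristic restriction to this fiber, and that the fiber meets the divisor strata transversally enough for the moderate-growth conditions to restrict sensibly. This is implicit in Mochizuki's and Sabbah's treatments (the moderate de Rham complex is $\mathbb{R}$-constructible and its formation is local, commuting with the relevant restrictions), so I would cite \cite{Sab}, \cite{Moc14} for this and not reprove it; the remaining content of the lemma is then the formal proper-base-change argument, which is routine once properness is in hand.
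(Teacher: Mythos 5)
Your proposal is correct and follows essentially the same route as the paper: the paper's proof simply notes that $\widetilde{p\circ e}$ is proper and invokes the proper base change theorem (citing Dimca, Th.\ 2.3.26). Your additional remarks on constructibility and the compatibility of $\operatorname{DR}^{mod}$ with restriction to the fiber are reasonable elaborations but do not change the argument.
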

	
\begin{proof}
$\widetilde {p\circ e}$ is proper, thus the claim follows by applying the proper base change theorem  (cf. \cite{Dim}, Th. 2.3.26, p. 41). 
\end{proof}
\\
In the following we will consider
$$ \mathcal F_\psi := \operatorname{DR}^{mod  \left( D'_Z\right)}  \left( e^+ \mathcal M \otimes \mathcal E^{g \left( t,y\right) \circ e}\right)$$
on the fiber $ \left( \widetilde{p\circ e}\right)^{-1} \left( \vartheta\right)$ in $\partial \widetilde Z  \left( D'_Z\right)$,  which in our case will be a  $1$-dimensional complex analytic space. Therefore we can assume $\mathcal F_\psi$ to be a perverse sheaf, i.e. a $2$-term complex $\mathcal F_\psi: 0 \to \mathcal F^0 \to \mathcal F^1 \to 0$ with additional conditions on the cohomology sheaves $\mathcal H^0  \left( \mathcal F_\psi\right)$ and $\mathcal H^1 \left( \mathcal F_\psi \right)$. Obviously $\mathcal H^i  \left( \mathcal F_\psi\right)=0$ for $i \neq 0,1$  ( cf. \cite{Dim}, Ex. 5.2.23, p. 139). 

Furthermore we can restrict $\mathcal F_{\psi}$ to the set $B_{\psi}^{\vartheta}:= \left\{ \zeta \in  \left( \widetilde{p\circ e}\right)^{-1} \left( \vartheta\right)  \mid  \left( \mathcal H^{\bullet}(\mathcal F_{\psi}) \right)_{\zeta} \neq 0\right\}$, which is an open subset of $ \left( \widetilde{p\circ e}\right)^{-1} \left( \vartheta\right)$. We denote the open embedding by 
$\beta_{\psi}^{\vartheta}: B_{\psi}^{\vartheta} \hookrightarrow  \left( \widetilde{p\circ e}\right)^{-1} \left( \vartheta\right)$.
Thus by interpreting  $\mathcal F_{\psi} $ as a complex of sheaves on $B_{\psi}^{\vartheta}$, we have  to  compute 
\[ \mathbb H^1 \left(  \left( \widetilde{p\circ e}\right)^{-1} \left( \vartheta\right), \beta_{\psi, !}^{\vartheta}  \mathcal F_{\psi} \right) \cong \mathbb H^1_c \left( B_{\psi}^{\vartheta}, \mathcal F_{\psi}\right).\]


\subsubsection{Construction of a Resolution of Singularities}

In the following sections we will construct a suitable blow up map $e$, such that we can describe $ \left( \widetilde {p\circ e}\right)^{-1} \left( \vartheta\right)$ and $B_{\psi}^{\vartheta}$ more concretely.
	
\begin{lemma} \label{lemma1}  Let $g \left( t,y\right)= \frac 1 y - \psi \left( t\right)$. There exists a sequence of blow up maps $e$ such that 
	\begin{enumerate}
		\item$g \circ e$ holomorphic or good, i.\:e. $(g \circ e)\left( u, v\right)= \frac 1 {u^m v^n} \beta \left( u,v\right)$, whereby $ \beta$ holomorphic and $ \beta \left( 0,v\right)				\neq0$. 
		\item For all $i$ the strict transform of $S_i$ intersects  $D_Z$  in a unique point $P_i$.
	\end{enumerate}
\end{lemma}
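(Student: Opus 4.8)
The plan is to reduce the statement to an explicit local resolution at the point $(0,\infty)=\{t=y=0\}$ and then carry out the concrete blow-ups dictated by the Newton polygon of $g$. Write $\psi(t)=\mu(t)t^{-q}$ (if $\psi=0$ then $g=1/y$ is already monomial and only part~(2) needs an argument); then $g=\frac{t^{q}-\mu(t)y}{t^{q}y}$. Away from $(0,\infty)$ the function $g$ is already good: near a point of $D=\{t=0\}$ with $y\neq0,\infty$ one has $g=t^{-q}(\tfrac{t^{q}}{y}-\mu(t))$, and $\tfrac{t^{q}}{y}-\mu(t)$ is a unit off the curve $\{1/y=\psi\}$, which by Assumption~\ref{Ass 1} meets $D$ only in $(0,\infty)$; near $\Delta\times\{\infty\}=\{y=0\}$ with $t\neq0$ one has $g=y^{-1}(1-\psi(t)y)$ with $1-\psi(t)y$ a unit; and near the points $\widetilde S_{j}\cap D$ (distinct from $(0,\infty)$ by Assumption~\ref{Ass 2}) again $g=t^{-q}\cdot(\mathrm{unit})$, so the blow-ups needed there to turn $\overline D_{Z}$ into a normal crossing divisor create no zeros of $g\circ e$. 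Hence everything reduces to a local construction at $(0,\infty)$.

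Locally at $(0,\infty)$ I would write $g=f/h$ with $h=t^{q}y$ a monomial and $f=t^{q}-\mu(t)y$, so that $\{f=0\}=\{1/y=\psi\}$ is the smooth curve $y=t^{q}/\mu(t)$; each $S_{i}=\{1/y=\psi_{i}\}$ has the same shape. I would take for $e$ a sequence of point blow-ups which is an embedded resolution of $\overline D$ at $(0,\infty)$ and in addition monomialises $g$, built as follows. Since the Newton polygon of $f$ has only the edge from $(q,0)$ to $(0,1)$, one first performs the $q$ successive blow-ups at the intersection of the newest exceptional divisor with the strict transform of $\{y=0\}$; this makes the strict transform of $\{f=0\}$ smooth and transverse to the exceptional locus, meeting it in the single point ``$y=1/\mu(0)$'' and disjoint from the strict transforms of $D$ and of $\{y=0\}$. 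As the pairs $(q_{i},\mu_{i}(0))$ are pairwise distinct (Assumption~\ref{Ass 1}), finitely many such blow-ups separate all the $S_{i}$ at the same time, each strict transform then meeting the exceptional locus in its own point. Throughout, in every exceptional chart $h\circ e$ remains a monomial times a unit and $f\circ e$ becomes a monomial times (a unit, or the local equation of a strict transform of some $S_{i}$); hence $g\circ e=(f\circ e)/(h\circ e)$ is a monomial times a unit at every point of $\overline D_{Z}$ not lying on a strict transform of an $S_{i}$, and its polar divisor is contained in the total transform of $(g)_{\infty}=D+\Delta\times\{\infty\}$, hence in $D'_{Z}$ (a fortiori in $\overline D_{Z}$). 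This already gives part~(1) away from the $S_{i}$, and keeps $\overline D_{Z}$ normal crossing.

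It remains to handle the finitely many points where the strict transform of some $S_{i}$ meets $D_{Z}$. Near such a point, writing $u$ for the equation of the exceptional divisor $E$ through it and $v$ for that of the strict transform of $S_{i}$, one has $g\circ e=u^{-p}\,v\cdot(\mathrm{unit})$ with $p$ the pole order of $g\circ e$ along $E$; if $p\le0$ we are already done there. If $p\ge1$, blow the point up once more: since the strict transform of $S_{i}$ is transverse to $E$ and is not contained in $\{h=0\}$, the multiplicity of $h\circ e$ at the point exceeds that of $f\circ e$ by $p-1$ (one less than their difference $p$ along $E$), so the pole order of $g\circ e$ along the new exceptional divisor equals $p-1$. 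After finitely many such blow-ups $p$ reaches $0$: at the point $P_{i}$ where the strict transform of $S_{i}$ then meets the exceptional locus one has $g\circ e=v\cdot(\mathrm{unit})$, which is holomorphic, and by construction the strict transform of $S_{i}$ meets $D_{Z}$ only in $P_{i}$. Combined with the previous paragraph this gives~(1) ($g\circ e$ holomorphic at the $P_{i}$, monomial-times-unit with poles inside $D'_{Z}$ elsewhere on $\overline D_{Z}\supset D_{Z}$) and~(2).

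The hard part will be the bookkeeping in this last step: one has to be sure the pole order of $g\circ e$ along the exceptional divisor through $S_{i}\cap D_{Z}$ really decreases to $0$ after finitely many blow-ups, i.e.\ that $(g\circ e)_{0}-(g\circ e)_{\infty}$ becomes effective near each strict transform of $S_{i}$. This is exactly where the special shape $g=\tfrac1y-\psi$ is used: $(g)_{0}$ is the reduced sum of the curves $S_{i}=\{1/y=\psi_{i}\}$ (together with $\{1/y=\psi\}$), the polar divisor $(g)_{\infty}=D+\Delta\times\{\infty\}$ is normal crossing away from $(0,\infty)$, and the only ``defect'' is the finite order of tangency of the $S_{i}$ with the coordinate axes at $(0,\infty)$, which the explicit blow-ups above resolve. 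The remaining verifications — that $h\circ e$ stays monomial, that $f\circ e$ acquires only the predicted form, that $g\circ e$ gains no pole outside $D'_{Z}$, and that $\overline D_{Z}$ stays normal crossing — are routine chart computations I would not spell out in detail.
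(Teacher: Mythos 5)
Your proposal is correct and follows essentially the same route as the paper: a sequence of point blow-ups over $(0,\infty)$ (the paper takes $n=\max_i q_i$ of them in explicit coordinates $t=u_kv_k$, $y=u_k^{k-1}v_k^k$) to separate the $S_i$ via Assumption \ref{Ass 1} and monomialise the polar locus, followed by a recentering at the point where the strict transform of $\{1/y=\psi\}$ meets the exceptional divisor and $q$ further blow-ups that drop the pole order by one each time until $g\circ e$ becomes holomorphic there. The only cosmetic difference is that the paper works entirely in explicit charts where you invoke Newton-polygon/pole-order bookkeeping, and note that the factor ``local equation of a strict transform of some $S_i$'' in $f\circ e$ can only occur for the single $S_i$ with $\psi_i=\psi$, since $f$ vanishes only along $\{1/y=\psi\}$; at the other $P_j$ the function $g\circ e$ is already good after the first stage.
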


\begin{proof}
The divisor components $S_i$ are given by $S_i: \mu_i \left( t\right) y= t^{q_i}$. Let $n:= max \{ q_i\}$. We distinguish two cases: 
	\begin{enumerate}
	\item $\psi=0$, i. e.  $g \left( t,y\right)= \frac 1y$. After $n$ blow ups in $ \left( 0,0\right)$ we get an exceptional divisor $D_Z$ with local coordinates
		\[ t= u_k v_k, y= u^{k-1}_k v^k_k \text{ and } t= u_{\tilde n}, y=\tilde u^n_{n} \tilde v_{n}\]
		and $g \circ e$ is good or holomorphic in every point. 
			\begin{center}
				\includegraphics[scale=0.8]{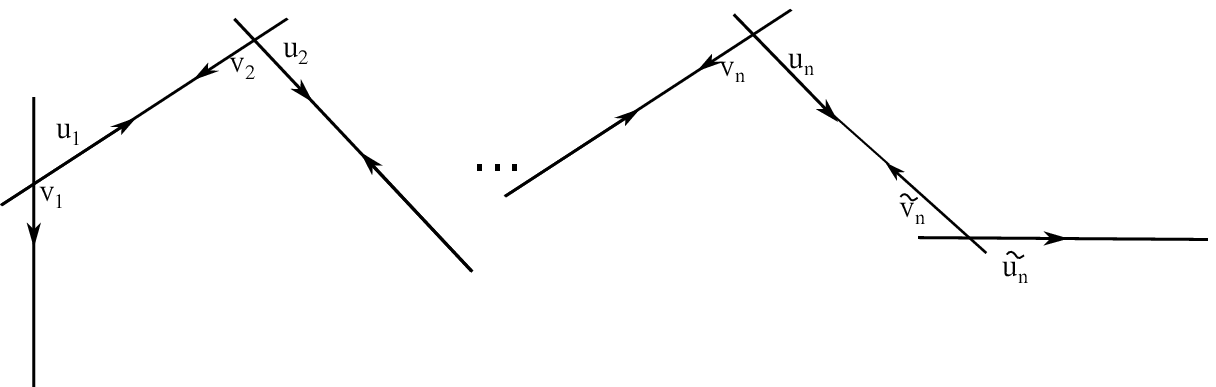}
			\end{center}
		The intersection points $P_i$ of the strict transform of $S_i: \mu_i \left( t\right) y= t^{q_i}$ is given by 
		\[P_i=  \left( 0, \frac 1 {\mu_i \left( 0\right)}\right)\] 
		in the proper coordinates (i.\:e.  $ \left( u_{q_i +1}, v_{q_i+1}\right)$ for $q_i < n$ and $\left( \tilde u_{n}, \tilde v_{n} \right)$ for $q_i=n$)
	\item $\psi \neq 0$: $\psi$ is given by $\psi \left( t\right) = \mu \left( t\right) t^{-q}$,  so $g \left( t,y\right) = \frac {t^q-\mu \left( t\right) y} {y t^q}$.$ \left( g\circ e\right)$ is good in every 		point except: 
		\begin{itemize}
			\item $q<n, k= q+1:$ $P= \left( 0, \frac 1 {\mu \left( 0\right)}\right)$ with local coordinates $ \left( u_{q+1}, v_{q+1}\right)$
			\item $q=n, k=n:$ $P= \left( 0, \frac 1 {\mu \left( 0\right)}\right)$ with local coordinates $ \left( \tilde u_{n}, \tilde v_{n}\right)$
		\end{itemize}
		Let $q<n$. After  changing coordinates $u' = u_k, v'= v_k -\frac 1 {\mu \left( u_k v_k\right)}$ and after $q$ blow-ups in $ \left( 0,0\right)$,  $ \left( g \circ e\right) $ is good for 		every point of $D_Z$ in local coordinates $ \left( u'_s, v'_s\right)$ and holomorphic for every point of $D_Z$ in local coordinates $ \left( \tilde u'_{q}, \tilde v'_{q}\right)$	
			\begin{center}
				\includegraphics[scale=0.8]{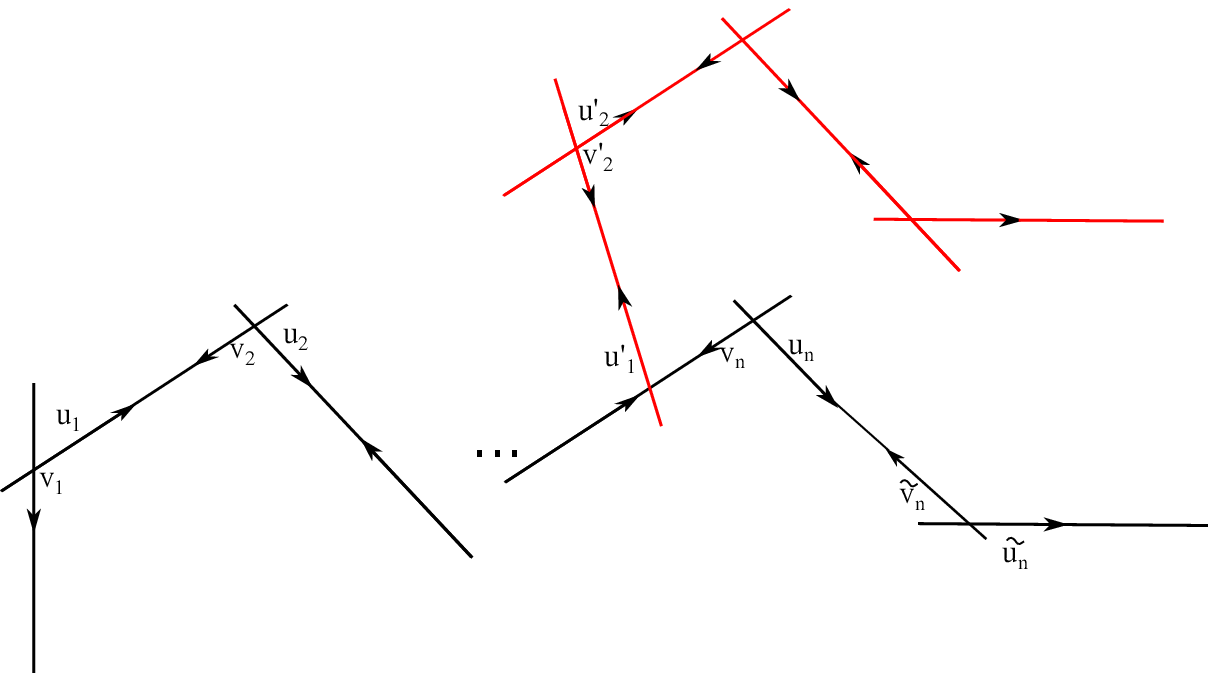}
			\end{center}
		Let $q=n$: After a change of coordinates $u'= \tilde u_{n}, v' = \tilde v_{n} - \frac 1 {\mu \left( \tilde u_{n}\right)}$ and $n$ blow-ups in $ \left( 0,0\right)$, $ \left( g \circ e\right) 		$ is good for every point of $D_Z$ in local coordinates $ \left( u'_s, v'_s\right)$ and holomorphic for every point of $D_Z$ in local coordinates $ \left( \tilde u'_{n}, \tilde v'_{n}		\right)$.
		As before the intersection points $P_i$ of the strict transform of $S_i: \mu_i \left( t\right) y= t^{q_i}$ wit $D_Z$ is given by 
		\[P_i=  \left( 0, \frac 1 {\mu_i \left( 0\right)}\right)\] 
		in the suitable coordinates.
		Now let $q_i=q$ and $\mu_i \left( 0\right) = \mu \left( 0\right)$, i.\:e. we consider $S: \mu \left( t\right) y= t^{q}$ (Notice, that $S$ corresponds to our given $\psi$!)
		\begin{itemize}
			\item $q<n, \text{ i.e. } k:=q+1\neq n$: $\overline S: u^{k-1}_{k} v^{k-1}_{k}  \left( 1- \mu_i \left( u_{k}v_{k}\right) v_{k}\right)=0$ and by coordinate transform we have: $ 				\overline S: u^{k-1}_{k} v^{k-1}_{k} v'=0$. As $v'= \tilde u'_{q} \tilde v'_{q}$ we get the unique intersection point $P= \left( 0, 0\right)$.
			\item $q=n$: In the same way we get the intersection point $P= \left( 0, 0\right)$.
		\end{itemize}
	\end{enumerate}
\end{proof}


\subsubsection {Topology of   $ \left( \widetilde {p \circ e}\right)^{-1}  \left( \vartheta\right)$}

$D'_Z$ is a normal crossing divisor, so locally at a crossing point $D'_Z$ has the form $ \{ u v=0\}$ and at a smooth point $D'_Z$ has the form $\{ u=0\}$. Thus we can describe $\partial \widetilde Z  \left( D'_Z\right)$ in local coordinates: 
	\begin{itemize}
		\item real blow up with respect to $\{ u=0\}$: $\zeta =  \left( 0, \theta_{u}, \left| v \right|, \theta_{v}\right)$ (with $v= \left| v \right| \cdot e^{i \theta_{v}}$)
		\item real blow up with respect to $\{ u v=0\}$: $\zeta =  \left( 0, \theta_{u}, 0, \theta_{v}\right)$
	\end{itemize}
	Now we take the fiber of $\vartheta \in \mathbb  S^1$, i.\:e. we consider $ \left( \widetilde {p \circ e}\right)^{-1}  \left( \vartheta\right)$. For every fixed $\left | v \right|$ we have a bijection $\{ \left(  0, \theta_u,\left | v \right|, \theta_v \right)\} \leftrightarrow \mathbb S^1$, thus, following the nomenclature of C. Sabbah, we can interpret $ \left( \widetilde {p \circ e}\right)^{-1}  \left( \vartheta\right)$ as a system of pipes, which furthermore is homeomorphic to a disc (\cite{Sab}, p. 203 )

Note, that for all $i$ the strict transform of the divisor component $S_i$ intersects the real blow up divisor $ \left( \widetilde {p \circ e}\right)^{-1} \left( \vartheta\right)$ in a unique point $P_i$.

\begin{remark} 
	\begin{enumerate}
		\item Also the irreducible components $\widetilde S_j$ intersect $ \left( \widetilde {p \circ e}\right)^{-1} \left( \vartheta\right)$ in distinct points $\widetilde P_j$. This follows 			directly from Assumption \ref {Ass 2}.
		\item According to Assumption \ref {Ass 1} we have ($q_i \neq q_j$ or $\mu_i \left( 0\right) \neq \mu_j \left( 0\right)$ for $i \neq j$). This induces $P_i \neq P_j$.  
		\item Every intersection point $P_i, \widetilde P_j$ may be interpreted as a 'leak' in the system of pipes $ \left( \widetilde {p \circ e}\right)^{-1} \left( \vartheta\right)$ (see 				\cite{Sab}, p. 204). Thus topologically we can think of $ \left( \widetilde {p \circ e}\right)^{-1} \left( \vartheta\right)$ as a disc with singularities, which come from the 				intersection with $\widetilde S_j$ and $\overline{S_i}$. 
	\end{enumerate}
	$$\begin{xy}
		\xymatrixrowsep{1.5cm}
		\xymatrixcolsep{2.0cm}
 		 \xymatrix
  		{
		\psi =0 & \psi \neq 0\\
		 \includegraphics[scale=0.3]{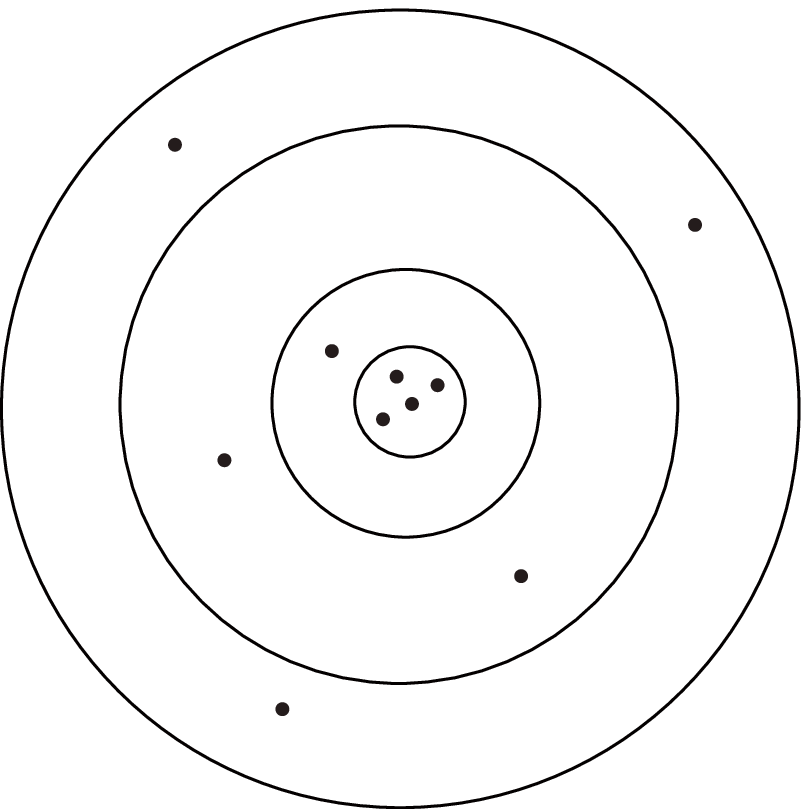} &  \includegraphics[scale=0.3]{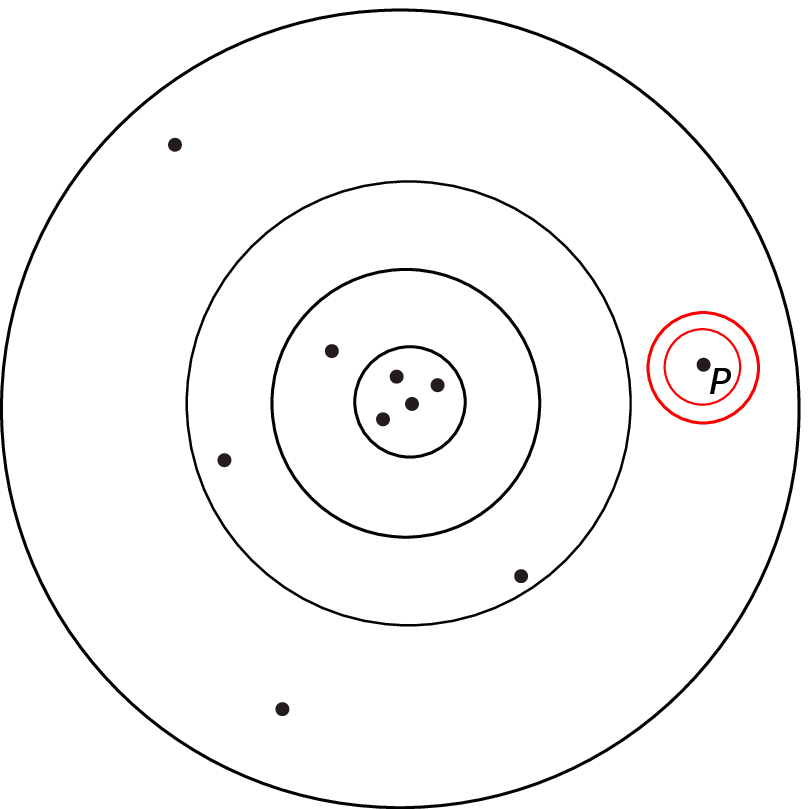}
   		}
		\end{xy}$$
\end{remark}	

\subsubsection{Explicit Description of  $B_{\psi}^{\vartheta}$}

Remember the definition of $B_{\psi}^{\vartheta} := \left\{\zeta \in  \left( \widetilde {p\circ e}\right)^{-1} \left( \vartheta\right)  \mid  \left( \mathcal H^{\bullet} \mathcal F_{\psi} \right)_{\zeta} \neq 0\right\}$.
Obviously we have: 
\[ \zeta \in B_{\psi}^{\vartheta} \Leftrightarrow  \left( \mathcal H^{\bullet}\mathcal F_{\psi} \right)_{\zeta} \neq 0  \Leftrightarrow e^{ \left( g \circ e\right)} \in \mathcal A^{mod \: D'_Z}_{\vartheta} \text{ near } \zeta\]
(The second equivalence follows by considering $\operatorname{DR}^{mod\: \overline D_Z}  \left(  e^+ \mathcal M \otimes \mathcal E^{g \left( t,y\right) \circ e}\right)$, whereof we know that it has cohomology in degree $0$ at most (cf. Proposition \ref{prop1} and Lemma \ref {DRlemma} ).) Thus we need to take a closer look at the exponent $g \circ e$. Therefore we use the following Lemma (cf. \cite{Sab}, 9.4):

\begin{lemma} \label{lem2} Let $u, v$ local coordinates of the divisor $D_Z$,  such that $f \left( u, v\right)$ holomorphic or good, i.\:e. 
$$f \left( u, v\right)= \frac 1 {u^m v^n} \beta \left( u,v\right), \: \text{ whereby } \beta \text{ holomorphic and } \beta \left( 0,v\right)\neq0. $$
Then $e^{f \left( u,v\right)} \in \mathcal A_{\vartheta}^{mod \: D_Z}$ around a given point  $\zeta \in   \left( \widetilde{p\circ e}\right)^{-1}  \left( \vartheta\right)$ if and only if
	\begin{align*}
		f \text{ holomorphic in }\zeta \text { or } arg  \left( \beta \left( 0,v\right)\right)-m \theta_{u}-n \theta_{v} \in  \left( \frac {\pi} 2, \frac{3\pi} 2\right) \: mod \: 2 \pi .
	\end{align*}
\end{lemma}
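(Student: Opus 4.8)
The plan is to reduce the statement to an elementary growth estimate for $\operatorname{Re} f$ in the real-blow-up coordinates, following \cite{Sab}. First I would unwind the definition: $e^{f}\in\mathcal A^{mod\:D_Z}_{\vartheta}$ near $\zeta$ means that on the trace near $\zeta$ of $Z\setminus D_Z$ the holomorphic function $e^{f}$ is bounded by $C\prod_k|\ell_k|^{-N}$ for local reduced equations $\ell_k$ of the branches of $D_Z$ through $\zeta$. Writing $u=\rho e^{i\theta_u}$, $v=\sigma e^{i\theta_v}$, one has $|e^{f}|=e^{\operatorname{Re} f}$ and $-\log|\ell_k|$ is comparable to $-\log\rho$ (resp. $-\log\sigma$); so the condition becomes: $\operatorname{Re} f$ is bounded above near $\zeta$ by $N(-\log\rho-\log\sigma)+C$. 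If $f$ is holomorphic at $\zeta$, then $\operatorname{Re} f$ is bounded and we are done with the first alternative; otherwise $f$ has a genuine pole (so $m\geq1$, or $n\geq1$ with $v(\zeta)=0$) and, as will come out of the computation, $\operatorname{Re} f$ is either bounded above or grows like a negative power of $\rho\sigma$ — so ``moderate'' simply means ``bounded above near $\zeta$''.

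Next I would Taylor-expand $\beta(u,v)=\beta(0,v)+u\,\gamma(u,v)$ with $\gamma$ holomorphic, giving
\[ f(u,v)=\frac{\beta(0,v)}{u^{m}v^{n}}+\frac{\gamma(u,v)}{u^{m-1}v^{n}}. \]
Substituting the polar coordinates and using $\arg(u^{m})=m\theta_u$, $\arg(v^{n})=n\theta_v$, the leading part of $\operatorname{Re} f$ as $\rho\to0$ (and $\sigma\to|v(\zeta)|\geq0$) is
\[ \frac{|\beta(0,v)|}{\rho^{m}\sigma^{n}}\,\cos\bigl(\arg\beta(0,v)-m\theta_u-n\theta_v\bigr), \]
with the remaining contribution of strictly smaller order in $1/\rho$ (and when $m=0$ the function is holomorphic in $u$ and one runs the same argument with the $v$-pole). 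Since $\beta(0,v)\neq0$ its argument is continuous near $\zeta$, so whenever the cosine above is nonzero at $\zeta$ it keeps its sign on a whole neighbourhood; hence $\operatorname{Re} f$ is bounded above near $\zeta$ exactly when $\cos(\arg\beta(0,v)-m\theta_u-n\theta_v)\leq0$, i.e. when $\arg\beta(0,v)-m\theta_u-n\theta_v\in(\pi/2,3\pi/2)\bmod2\pi$, and if it lies in $(-\pi/2,\pi/2)$ then $\operatorname{Re} f\to+\infty$ faster than $-\log\rho$ and moderate growth fails. This is precisely the asserted criterion.

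The delicate point will be the boundary case, when $\zeta$ lies over a Stokes direction and the leading cosine vanishes: then the sign of $\operatorname{Re} f$ is dictated by the subleading term $\gamma(u,v)/u^{m-1}v^{n}$ (the main term being only $o(\rho^{-m})$), and a separate, finer estimate is needed. In our setting, however, $\vartheta$ is always taken generic — not a Stokes direction of any relevant pair of exponential factors — so this case does not occur, and the open interval $(\pi/2,3\pi/2)$ is exactly what one wants in order to describe the \emph{open} locus $B_{\psi}^{\vartheta}$. The remaining bookkeeping — verifying the comparison $-\log|\ell_k|\asymp-\log\rho,-\log\sigma$ at both smooth and crossing points of $D_Z$, and that the estimates hold uniformly over a neighbourhood of $\zeta$ rather than along a single ray — is routine and is carried out in \cite{Sab}, 9.4.
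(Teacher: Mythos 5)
The paper does not actually prove this lemma: it is quoted as a known result (``cf.\ \cite{Sab}, 9.4'') and used as a black box, so you are supplying the argument that the paper delegates to Sabbah. Your argument is the standard one and its core is correct: unwinding $\mathcal A^{mod\: D_Z}$ to the bound $\operatorname{Re} f\leq N\left(-\log|u|-\log|v|\right)+C$, isolating the leading term $|\beta(0,v)|\,\rho^{-m}\sigma^{-n}\cos\left(\arg\beta(0,v)-m\theta_u-n\theta_v\right)$, and observing that the subleading contribution is of strictly smaller order, so that the sign of the cosine on a neighbourhood decides everything. This is exactly the computation behind the cited statement.

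One point is assembled wrongly, though. You treat the boundary case (leading cosine vanishing at $\zeta$) as needing a finer estimate of the subleading term, and then dismiss it on the grounds that $\vartheta$ is generic. That dismissal is incorrect: genericity of $\vartheta$ rules out Stokes directions for the comparison of the finitely many exponential factors at the distinguished points $P_i$, but the fiber $\left(\widetilde{p\circ e}\right)^{-1}\left(\vartheta\right)$ is two--real--dimensional and, for any fixed $\vartheta$, it does contain points $\zeta$ where $\arg\beta(0,v)-m\theta_u-n\theta_v\equiv\pm\frac{\pi}{2}$ --- these are precisely the boundary points of $B_{\psi}^{\vartheta}$, and the lemma's ``if and only if'' must hold there too. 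The correct resolution needs neither a subleading estimate nor genericity: since $m\geq 1$ (or $n\geq1$ with $v(\zeta)=0$) at such a point, $\theta_u$ (resp.\ $\theta_v$) is a free coordinate on the real blow--up, so every neighbourhood of $\zeta$ contains directions where the cosine is bounded below by a positive constant, whence $\operatorname{Re} f\to+\infty$ polynomially there and the stalkwise moderate--growth condition fails. This is exactly the ``uniformly over a neighbourhood rather than along a single ray'' point you already flag at the end; it does all the work in the boundary case, and with it your proof closes up correctly.
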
	

\begin{lemma} \label{lem1} 
	\begin{enumerate}
		\item Let $S_i: \mu_i \left( t\right) y= t^{q_i}$, $\psi_i \left( t\right)= \mu_i \left( t\right) t^{-q_i}$  with $\mu_i \left( 0\right)\neq0$ and $P_i$ the corresponding intersection point. 			Then we have: $ P_i \in B_{\psi}^{\vartheta} \Leftrightarrow \psi_i \leq_{\vartheta} \psi$.
		\item Let $\widetilde P_j$ the intersection points of $\widetilde S_j$ with $ \left( \widetilde {p \circ e}\right)^{-1} \left( \vartheta\right)$. Then we have: $ \left\{\widetilde P_j 			\mid j = 1, \ldots, J \right\} \subset B_{\psi}^{\vartheta} \Leftrightarrow 0 \leq_{\vartheta} \psi $.
	\end{enumerate}
\end{lemma}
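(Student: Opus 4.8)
The plan is to reduce both statements to the criterion established just above, namely $\zeta\in B_\psi^\vartheta\Leftrightarrow e^{g\circ e}\in\mathcal A^{mod\:D'_Z}_\vartheta$ in a neighbourhood of $\zeta$, and then to evaluate this growth condition at $\zeta=P_i$ and at $\zeta=\widetilde P_j$ by means of Lemma \ref{lem2}, using the explicit local coordinates furnished by Lemma \ref{lemma1}. In each case I would bring $g\circ e=\frac 1 y-\psi(t)$ into good (or holomorphic) normal form near the relevant point, read off $m$, $n$ and $\arg\beta(0,v)$, and check that the numerical condition of Lemma \ref{lem2} collapses to exactly the inequality describing $\psi_i\leq_\vartheta\psi$ (resp. $0\leq_\vartheta\psi$) recorded in Remark \ref{remark1} and in the proof of Corollary \ref{cor1}.

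For part (1): near $P_i=(0,1/\mu_i(0))$ I use the coordinates $(u,v)$ of Lemma \ref{lemma1}, where $t=uv$, $\frac 1 y=u^{-q_i}v^{-q_i-1}$ and $\psi(t)=\mu(t)(uv)^{-q}$; the relation $\widetilde{p\circ e}(\zeta)=\vartheta$ forces $\theta_u+\theta_v=\vartheta$, and the position of $P_i$ forces $\theta_v=-\arg\mu_i(0)$. I would distinguish the cases $q_i>q$, $q_i<q$, and $q_i=q$ with $\mu_i(0)\neq\mu(0)$: in each of them one of the two terms of $g\circ e$ dominates (or, in the third case, the bracket $1-\mu(uv)v$ is a nonzero unit at $P_i$), so that $g\circ e=\frac{1}{u^mv^n}\beta(u,v)$ with $\beta(0,v)$ equal to $1$, to $-\mu(0)$, resp. to $1-\mu(0)v$. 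Substituting the two relations above into Lemma \ref{lem2} and cancelling, the condition becomes $\arg(c_i)-d_i\vartheta\in(\pi/2,3\pi/2)\bmod 2\pi$, where $c_it^{-d_i}$ is the leading term of $\psi_i-\psi$ — which is precisely $\psi_i-\psi\leq_\vartheta 0$. The remaining subcase is $q_i=q$, $\mu_i(0)=\mu(0)$: then $P_i$ is the point $P$ around which Lemma \ref{lemma1} performs the substitution $v'=v-1/\mu(uv)$ and the extra blow-ups, after which $g\circ e$ near $P_i$ is either holomorphic — exactly when $\psi_i-\psi$ is holomorphic, and then $\psi_i\leq_\vartheta\psi$ holds for every $\vartheta$ — or again good, now with pole order and leading coefficient those of $\psi_i-\psi$, so that the same collapse applies (iterating once more if $\psi_i-\psi$ still shares leading data with $0$).

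For part (2): every blow-up composing $e$ sits over $(0,\infty)$, so $e$ is the identity near each $\widetilde P_j$; there $D'_Z$ is the smooth divisor $\{t=0\}$ and $\frac 1 y=x$ is holomorphic, whence $g\circ e=-\psi(t)+(\text{holomorphic})=\frac{1}{t^{q}}\beta(t,\xi)$ with $\beta(0,\xi)=-\mu(0)$ in local coordinates $(t,\xi)$ centred at $\widetilde P_j$ (and $g\circ e$ is outright holomorphic when $\psi=0$). Lemma \ref{lem2} with $(m,n)=(q,0)$ then yields $e^{g\circ e}\in\mathcal A^{mod\:D'_Z}_\vartheta$ near $\widetilde P_j$ if and only if $\arg(-\mu(0))-q\vartheta\in(\pi/2,3\pi/2)\bmod 2\pi$, i.e. if and only if $0\leq_\vartheta\psi$ by the computation in the proof of Corollary \ref{cor1}. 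Since this condition is independent of $j$, either all the $\widetilde P_j$ lie in $B_\psi^\vartheta$ or none of them do, which is the assertion.

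I expect the main obstacle to lie in the bookkeeping for part (1), specifically the degenerate subcase $q_i=q$, $\mu_i(0)=\mu(0)$: one has to follow $P_i$ through the iterated blow-ups of Lemma \ref{lemma1}, identify the chart it lands in and the resulting normal form of $g\circ e$ there, and verify that the reduction of the Lemma \ref{lem2} inequality to the $\leq_\vartheta$-condition remains uniform. The three generic subcases of (1), and all of part (2), are short once the local coordinates are in place.
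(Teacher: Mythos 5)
Your proposal follows exactly the route the paper takes: the paper's proof consists of the single remark that the claim "follows by examining the goodness condition of $g\circ e$ near the intersection points and by Remark \ref{remark1}," and your argument is precisely that examination carried out in detail, via the normal forms of Lemma \ref{lemma1} and the moderate-growth criterion of Lemma \ref{lem2}, reduced to the $\leq_\vartheta$ computation of Remark \ref{remark1} and Corollary \ref{cor1}. The case analysis, including the degenerate subcase $q_i=q$, $\mu_i(0)=\mu(0)$ handled through the extra blow-ups, is correct and consistent with the construction in the paper.
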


\begin{proof} 
This follows by examining goodness condition of $g \circ e$ near the intersection points and by Remark  \ref{remark1}
\end{proof}

\begin{definition} For abbreviation we define the following sets:
	\begin{itemize}
		\item $\mathcal P^{\vartheta}: = \{\widetilde P_j \mid j = 1 \ldots, J \} \cup \{P_i \mid i = 1, \ldots I \} \subset  \left( p\circ e\right)^{-1}  \left( \vartheta\right)$
		\item $\mathcal P^{\vartheta}_{\psi} := \mathcal P^{\vartheta} \cap B_{\psi}^{\vartheta}$
	\end{itemize} 
\end{definition}

\begin{lemma} \label{I_0} Let $\psi=0$. Then the fundamental group $\pi_1 \left( B^{\vartheta}_0 \setminus \mathcal P^{\vartheta}_{0} \right)$  is a free group of rank $\#  \left( \mathcal P^{\vartheta}_{0}\right)$.
\end{lemma}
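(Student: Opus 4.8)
The plan is to exhibit $B_0^\vartheta$ as a topological disc (in the sense already established: the fiber $(\widetilde{p\circ e})^{-1}(\vartheta)$ is a system of pipes homeomorphic to a disc, and $B_\psi^\vartheta$ is the open locus where $\mathcal{H}^\bullet\mathcal{F}_\psi\neq 0$), then remove the finitely many points of $\mathcal{P}_0^\vartheta$ and compute the fundamental group of the punctured surface. First I would invoke Lemma \ref{lem1}: for $\psi=0$, every intersection point $\widetilde{P}_j$ lies in $B_0^\vartheta$ (since $0\leq_\vartheta 0$ trivially), while $P_i\in B_0^\vartheta$ if and only if $\psi_i\leq_\vartheta 0$; hence $\mathcal{P}_0^\vartheta=\mathcal{P}^\vartheta\cap B_0^\vartheta$ is precisely the set of those points, and its cardinality matches the count appearing in Corollary \ref{cor1}(1). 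The key geometric input is that for $\psi=0$ the exponent $g\circ e=1/(u^m v^n)\cdot\beta$ has its ``leaks'' only at the points $P_i$ and $\widetilde P_j$, so that $B_0^\vartheta$ is obtained from the disc $(\widetilde{p\circ e})^{-1}(\vartheta)$ by removing a (closed) neighborhood of the pipe-ends where $e^{g\circ e}$ fails to have moderate growth — equivalently, $B_0^\vartheta$ deformation retracts onto a disc with some open sub-pipes attached, and in any case is a connected, simply connected open surface containing all the points of $\mathcal{P}_0^\vartheta$ in its interior.

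Granting that $B_0^\vartheta$ is homeomorphic to an open disc (or at least homotopy equivalent to a point), the remainder is the standard computation: an open disc with $k:=\#(\mathcal{P}_0^\vartheta)$ interior points removed is homotopy equivalent to a wedge of $k$ circles, so $\pi_1\big(B_0^\vartheta\setminus\mathcal{P}_0^\vartheta\big)$ is free of rank $k$. Concretely I would choose a basepoint, pick small disjoint loops $\gamma_i$ around each puncture, and check they generate freely — this is immediate from van Kampen applied inductively, removing one puncture at a time, or from the identification of the punctured disc with a planar region. The role of the ``system of pipes'' picture is just to guarantee the ambient space is planar and simply connected so that no relations among the $\gamma_i$ appear.

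The main obstacle, and the step deserving the most care, is justifying that $B_0^\vartheta$ really is simply connected — i.e. that removing the non-moderate-growth locus from the disc $(\widetilde{p\circ e})^{-1}(\vartheta)$ does not create any handles or disconnect it. For $\psi=0$ this should follow from Lemma \ref{lem2}: near a point $\zeta$ of the real blow-up divisor, moderateness of $e^{g\circ e}$ is governed by the single inequality $\arg\beta(0,v)-m\theta_u-n\theta_v\in(\tfrac\pi2,\tfrac{3\pi}2)$, which cuts out a sub-pipe that is again an interval-bundle over a connected arc, hence the bad locus is a union of ``closed half-pipes'' attached along the boundary; excising them from a disc leaves a disc. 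I would therefore spend the bulk of the argument tracking, in the explicit coordinates of Lemma \ref{lemma1}, which pipe-ends survive and verifying connectedness and simple-connectedness of $B_0^\vartheta$ directly from the pipe description, after which the rank-$k$ free group conclusion is purely formal. (One should also note that the three cohomological-degree and base-change reductions — Proposition \ref{prop1}, Lemma \ref{lem3}, Lemma \ref{DRlemma} — are what license working with $\mathcal{F}_0$ as an ordinary constructible sheaf on this surface in the first place.)
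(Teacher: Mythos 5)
Your proposal follows essentially the same route as the paper: the paper's proof likewise establishes that $B_0^{\vartheta}$ is simply connected by writing it as a gluing of explicitly described, simply connected pieces $M_1,\dots,M_4$ in the blow-up coordinates of Lemma \ref{lemma1}, with the points of $\mathcal P_0^{\vartheta}$ distributed among them, and then concludes that removing these finitely many points yields a free fundamental group of rank $\#\left(\mathcal P_0^{\vartheta}\right)$. Your additional remarks (the role of Lemma \ref{lem1} in identifying $\mathcal P_0^{\vartheta}$, Lemma \ref{lem2} controlling the moderate-growth locus, and the van Kampen step) are correct elaborations of what the paper leaves implicit.
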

		
\begin{proof}
This follows from the fact, that $B^{\vartheta}_0$ emerges from glueing the following sets of points: 
	\begin{itemize}
		\item $ M_1 = \{ \zeta = \left( 0, \theta_{u_1}, \left |v_1\right|, \theta_{v_1}\right)  \mid v_1 \neq 0\}$ 
		\item $ M_2 = \{ \zeta = \left( 0, \theta_{u_1}, 0, \theta_{v_1}\right) \mid \theta_{v_1}  \in \left(   \frac {\pi} {2}, \frac {3 \pi } {2} \right)\}$
		\item $M_3 = \{ \zeta = \left( 0, \theta_{u_k}, \left |v_k\right|, \theta_{v_k}\right) \mid \theta_{v_k}  \in \left(   \frac {\pi} {2}, \frac {3 \pi } {2} \right) \}$
		\item $M_4 = \{ \zeta = \left( 0, \theta_{\tilde u_{n}}, \left |\tilde v_{n}\right|, \theta_{\tilde v_{n}}\right) \mid \theta_{\tilde v_{n }}  \in \left(   \frac {\pi} {2} - n \vartheta, \frac {3 \pi } 		{2} - n \vartheta \right)\}$
	\end{itemize}
Since $M_1$ contains the $\widetilde P_j$'s and $M_2, M_3, M_4$ are simply connected and contain the relevant $P_i$'s, this shows the claim. 
\end{proof}

\begin{lemma} \label{I_1} Let $\psi \neq 0$, i.\:e. $\psi$ is given by $\psi \left( t\right)= \mu \left( t\right) t^{-q}$. Then $\pi_1 \left( B_{\psi}^{\vartheta} \setminus \mathcal P^{\vartheta}_{\psi}\right)$ is a free group of rank  $ \#  \left( \mathcal P^{\vartheta}_{\psi}\right)$.
\end{lemma}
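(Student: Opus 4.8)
The plan is to mimic the strategy of Lemma \ref{I_0}, but taking into account that when $\psi\neq0$ the blow-up map $e$ of Lemma \ref{lemma1} involves the extra shift of coordinates $v'=v_k-\frac{1}{\mu(u_kv_k)}$ together with $q$ further point blow-ups, so the chart structure of $\bigl(\widetilde{p\circ e}\bigr)^{-1}(\vartheta)$ near the point $P$ attached to the exponential factor $\psi$ itself is slightly more involved. The idea is: first describe $B_\psi^\vartheta$ explicitly as a gluing of finitely many coordinate pieces, each of which is either simply connected or a collar of the $|v|=0$ sphere, exactly as in the $\psi=0$ case; then observe that $B_\psi^\vartheta$ deformation retracts onto a graph (a one-complex), so that its fundamental group is free; and finally count that removing the finitely many marked points $\mathcal P^\vartheta_\psi$ raises the rank of the free group by exactly $\#\bigl(\mathcal P^\vartheta_\psi\bigr)$, since each $P_i$ or $\widetilde P_j$ lies in the interior of a simply connected (or annular-collar) piece and puncturing a disc at $k$ interior points gives a free group of rank $k$.

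Concretely, I would first apply Lemma \ref{lemma1} in the case $\psi\neq0$ to fix the resolution $e$, and then use Lemma \ref{lem2} (the goodness criterion of Sabbah 9.4) to read off, chart by chart, the locus where $e^{g\circ e}\in\mathcal A^{mod\,D'_Z}_\vartheta$. In the charts $(u_k,v_k)$ with $k\le q$ and in the chart $(\tilde u_n,\tilde v_n)$ — the regions not affected by the coordinate shift — one gets the very same description as the sets $M_1,\dots,M_4$ in Lemma \ref{I_0}, i.e.\ pieces over $|v|\neq0$ (containing the $\widetilde P_j$ when $0\le_\vartheta\psi$, by Lemma \ref{lem1}(2)) and angular sectors $\theta_v\in(\tfrac\pi2-c,\tfrac{3\pi}2-c)\bmod2\pi$ over $|v|=0$, which are simply connected. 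In the shifted charts $(u'_s,v'_s)$ and $(\tilde u'_q,\tilde v'_q)$ arising from the $q$ extra blow-ups, $g\circ e$ is good or holomorphic everywhere, and the corresponding contribution to $B_\psi^\vartheta$ is again a union of simply connected angular sectors together with the point $P=(0,0)$ where the strict transform of $S$ meets $D_Z$ (the computation at the end of Lemma \ref{lemma1}). Gluing all these pieces along the pipe structure of $\bigl(\widetilde{p\circ e}\bigr)^{-1}(\vartheta)$ yields a space homotopy equivalent to a finite wedge of circles, one circle for each ``leak'' of the pipe system that lies in $B_\psi^\vartheta$, i.e.\ for each point of $\mathcal P^\vartheta_\psi$; hence $\pi_1\bigl(B_\psi^\vartheta\setminus\mathcal P^\vartheta_\psi\bigr)$ is free of rank $\#\bigl(\mathcal P^\vartheta_\psi\bigr)$.

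The main obstacle I expect is bookkeeping rather than conceptual: one has to check carefully that the coordinate change $v'=v_k-\frac{1}{\mu(u_kv_k)}$ and the subsequent $q$ blow-ups do not create any new loop in $B_\psi^\vartheta$ — in particular that the transition regions between the ``old'' charts and the ``shifted'' charts glue up to simply connected collars and do not, e.g., close up into an annulus that is not punctured by a marked point. Equivalently, one must verify that $B_\psi^\vartheta$ retracts onto a tree with the points of $\mathcal P^\vartheta_\psi$ as the only extra vertices forcing loops after puncturing. This is where I would lean on the picture of $\bigl(\widetilde{p\circ e}\bigr)^{-1}(\vartheta)$ as a disc-with-leaks (the remark preceding this subsection and \cite{Sab}, pp.\ 203--204): the ambient fiber is a disc, $B_\psi^\vartheta$ is an open subset of it described by finitely many angular inequalities, hence is itself a disc or a finite union of discs glued along intervals, and the only topology of $B_\psi^\vartheta\setminus\mathcal P^\vartheta_\psi$ comes from the punctures, giving the free group of the asserted rank.
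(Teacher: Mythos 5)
Your proposal follows essentially the same route as the paper's proof: decompose $B_\psi^\vartheta$ chart by chart via the goodness criterion into simply connected pieces (including the extra ``branching'' charts coming from the coordinate shift and the $q$ further blow-ups), observe that the case distinction is governed by whether $\mathrm{arg}(-\mu(0))-q\vartheta\in(\tfrac{\pi}{2},\tfrac{3\pi}{2})$ (equivalently whether the $\widetilde P_j$ lie in $B_\psi^\vartheta$), and conclude as in Lemma \ref{I_0} that puncturing at $\mathcal P^\vartheta_\psi$ gives a free group of the stated rank. The paper's own proof is in fact terser than your write-up on the gluing bookkeeping you flag as the main obstacle, so no discrepancy there.
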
	

\begin{proof}
Explicitely we have to prove:  
	\begin{enumerate}
		\item For $\vartheta \in \left(  \frac {\frac {\pi} {2}+ arg  \left( -\mu \left( 0\right)\right)} {q}, \frac {\frac {3\pi} {2}+ arg  \left( -\mu \left( 0\right)\right)} {q} \right) \: mod \: \frac {2\pi}{q}			$:  $\pi_1 \left( B_{\psi}^{\vartheta} \setminus \mathcal P^{\vartheta}_{\psi}\right)$ is a free group of rank $\# \{\widetilde P_j \} + \#\{P_i\mid P_i \in B_{\psi}^{\vartheta}\}$
		\item For $\vartheta \notin \left(  \frac {\frac {\pi} {2}+ arg  \left( -\mu \left( 0\right)\right)} {q}, \frac {\frac {3\pi} {2}+ arg  \left( -\mu \left( 0\right)\right)} {q} \right) \: mod \: \frac {2\pi}			{q}$: $ \pi_1 \left( B_{\psi}^{\vartheta} \setminus \mathcal P^{\vartheta}_{\psi}\right)$ is a free group of rank \linebreak $\#\{P_i\mid P_i \in B_{\psi}^{\vartheta}\}$
	\end{enumerate}
For $k \leq q$ we have: $\zeta=  \left( 0, \theta_{u_k}, \left |v_k\right|, \theta_{v_k}\right) \in B_{\psi}^{\vartheta} \Leftrightarrow  arg  \left( -\mu \left( 0\right)\right) - q \vartheta \in \left(   \frac {\pi} {2}, \frac {3 \pi } {2} \right)$. This explains the sub-division into the two cases above. As in the previous lemma $B_{\psi}^{\vartheta}$ consists of glueing simply connected sets (additionally taking the sets of the branching part into account).	
\end{proof}
\\
Descriptively, this means, that there are no other holes' in $B_{\psi}^{\vartheta}$ than the singularities $P_i$, which arise  from the intersection with $S_i$ and possibly -- 	depending on the choice of $\vartheta$ if $\psi \neq 0$ -- the singularities $\widetilde P_j$, which arise from the intersection with $\widetilde S_j$. 

Remark, that an open interval of the boundary of $B_{\psi}^{\vartheta}$ lies in $B_{\psi}^{\vartheta}$! This holds because for $k=n$ we have
\[ \left(  0, \vartheta, 0, \theta_{\tilde v_{n}}  \right)  \in B_{\psi}^{\vartheta} \Leftrightarrow  \theta_{\tilde v_{n}} \in  \left(   \frac {\pi} {2} - n \vartheta, \frac {3 \pi } {2} -n \vartheta \right)\]
both if $q<n$ and  if $q=n$.


\subsubsection {Dimension of $\mathbb H^1_c \left( B_{\psi}^{\vartheta}, \mathcal F_{\psi} \right)$}

\begin{proposition} On $B_{\psi}^{\vartheta} \setminus \mathcal P^{\vartheta}_{\psi}$ the perverse sheaf $\mathcal F_{\psi} $ has cohomology in degree $0$ at most. 
\end{proposition}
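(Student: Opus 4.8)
The statement is local on the fibre $(\widetilde{p\circ e})^{-1}(\vartheta)$, and since $\mathcal F_{\psi}$ has cohomology only in degrees $0$ and $1$, it is enough to prove that $\bigl(\mathcal H^{1}\mathcal F_{\psi}\bigr)_{\zeta}=0$ for every $\zeta\in B_{\psi}^{\vartheta}\setminus\mathcal P^{\vartheta}_{\psi}$. The plan is to show that, away from the points of $\mathcal P^{\vartheta}_{\psi}$, the twisted connection $e^{+}\mathcal M\otimes\mathcal E^{g\circ e}$ is locally a good model along a normal crossing divisor, for which the moderate de Rham complex is known to be concentrated in degree $0$; the points $P_{i},\widetilde P_{j}$ are exactly the places where this fails (the degree-$1$ cohomology being produced there by the remaining singularities of $e^{+}\mathcal M$ along the strict transforms of the $S_{i}$ and $\widetilde S_{j}$), and they are precisely what gets removed.

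First I would sort out the geometry. Every blow-up performed in the construction of $e$ (Lemma \ref{lemma1}) is centred at a point lying over $D'=D\cup(\Delta\times\{\infty\})$, so each exceptional component of $e$ is contained in $D'_{Z}=e^{-1}(D')$; hence the irreducible components of $\overline D_{Z}=e^{-1}(\overline D)$ that are \emph{not} contained in $D'_{Z}$ are exactly the strict transforms of the $S_{i}$ and of the $\widetilde S_{j}$. By Lemma \ref{lemma1}(2) and the Remark following it --- which use Assumptions \ref{Ass 1} and \ref{Ass 2} to guarantee that the $P_{i}$ are pairwise distinct and the $\widetilde P_{j}$ pairwise distinct, and that each strict transform meets the fibre in a single point --- these strict transforms meet $(\widetilde{p\circ e})^{-1}(\vartheta)$ only in the points of $\mathcal P^{\vartheta}=\{P_{i}\}\cup\{\widetilde P_{j}\}$. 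Since $\mathcal P^{\vartheta}_{\psi}=\mathcal P^{\vartheta}\cap B^{\vartheta}_{\psi}$, for $\zeta\in B_{\psi}^{\vartheta}\setminus\mathcal P^{\vartheta}_{\psi}$ no component of $\overline D_{Z}$ outside $D'_{Z}$ passes through the image of $\zeta$; hence $\overline D_{Z}$ and $D'_{Z}$ coincide in a neighbourhood of that image, the real blow-up spaces $\widetilde Z(D'_{Z})$ and $\widetilde Z(\overline D_{Z})$ are canonically isomorphic there, and near $\zeta$ the complex $\mathcal F_{\psi}=\operatorname{DR}^{mod\,D'_{Z}}(e^{+}\mathcal M\otimes\mathcal E^{g\circ e})$ coincides with $\operatorname{DR}^{mod\,\overline D_{Z}}(e^{+}\mathcal M\otimes\mathcal E^{g\circ e})$.

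Then I would finish by invoking the degree-$0$ concentration for good models. By Lemma \ref{lemma1}(1) the exponent $g\circ e$ is holomorphic or good at every point of $D_{Z}$, and $\overline D_{Z}$ is a normal crossing divisor, so near the image of $\zeta$ the module $e^{+}\mathcal M\otimes\mathcal E^{g\circ e}$ is a good elementary meromorphic connection --- a regular singular connection twisted by a good exponential --- along a normal crossing divisor; for such a connection the moderate de Rham complex on the real blow-up is concentrated in degree $0$ (this is the same input already used above for $\operatorname{DR}^{mod\,\overline D_{Z}}$: Lemma \ref{DRlemma}; cf.\ also Proposition \ref{prop1} and \cite{Sab}). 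Restricting to the fibre does not change this, so $\bigl(\mathcal H^{1}\mathcal F_{\psi}\bigr)_{\zeta}=\bigl(\mathcal H^{1}\operatorname{DR}^{mod\,\overline D_{Z}}(e^{+}\mathcal M\otimes\mathcal E^{g\circ e})\bigr)_{\zeta}=0$. As all points of $B_{\psi}^{\vartheta}$ lie over $D_{Z}\subseteq D'_{Z}$ (the fibre sits over $t=0$), there is no further ``interior'' case, and the proposition follows.

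The step I expect to demand the most care is the identification in the second paragraph: one must be sure that after deleting $\mathcal P^{\vartheta}_{\psi}$ no point of $B^{\vartheta}_{\psi}$ survives over which $e^{+}\mathcal M$ still has a singularity along a component of $\overline D_{Z}$ outside $D'_{Z}$. This is exactly where Lemma \ref{lemma1}(2) and the separation built into Assumptions \ref{Ass 1} and \ref{Ass 2} are indispensable --- were two of the $S_{i}$, or an $S_{i}$ and a $\widetilde S_{j}$, to meet the fibre in the same point, deleting that single point could still leave a superimposed singularity and the comparison of $\mathcal F_{\psi}$ with $\operatorname{DR}^{mod\,\overline D_{Z}}$ near it would break. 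Once this reduction is in place, no computation remains: the concentration in degree $0$ of the moderate de Rham complex of a good model on a normal crossing divisor, which we may assume, yields the claim.
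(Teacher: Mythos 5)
Your argument is correct and follows essentially the same route as the paper: away from the points $P_i$, $\widetilde P_j$ the divisor is normal crossing, $e^{+}\mathcal M$ is regular singular and $g\circ e$ is good or holomorphic there, so the degree-$0$ concentration of the moderate de Rham complex (\cite{Sab}, Cor.\ 11.22) applies. Your version merely spells out in more detail the local identification of $\operatorname{DR}^{mod\: D'_Z}$ with $\operatorname{DR}^{mod\: \overline D_Z}$ away from $\mathcal P^{\vartheta}_{\psi}$, which the paper leaves implicit.
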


\begin{proof}
We know that $e^+ \mathcal M $ has regular singularities along the divisor and that $\mathcal E^{g \left( t,y\right)}$ is good or even holomorphic. Moreover the divisor is normal crossing except possibly at the intersection points $P_i$, $\widetilde P_j$. Thus we can apply \cite{Sab}, Corollary 11.22.
\end{proof}

\begin{lemma}[\cite{SabND}, Prop 1.1.6] \label{Delta} Let $\Delta$ be an open disc and $\Delta'$ an open subset of the closure $\overline{\Delta}$, consisting of $\Delta$ and a connected open subset of $\partial \overline{\Delta}$. Let $C$ be a finite set of points in $\Delta$. Let $\mathcal M$ be a regular singular $\mathcal D_{\Delta'}$-module and consider a perverse sheaf $\mathcal F$ with singularities in $c \in C$ only. Then $\mathbb H^k_c \left( \Delta', \mathcal F \right)=0$ for $k \neq 1$ and  $\operatorname{dim} \: \mathbb H^1_c \left( \Delta', \mathcal F \right)$ is equal to the sum of the dimensions of the vanishing cycle spaces at $c \in C$.
\end{lemma}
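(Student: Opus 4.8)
The plan is to turn the statement into a topological computation of compactly supported hypercohomology on a non-compact real surface whose boundary is non-empty. By the regular Riemann–Hilbert correspondence, replace $\mathcal M$ by its de Rham complex $\mathcal F=\mathrm{DR}(\mathcal M)$: this is a complex on $\Delta'$ concentrated in degrees $0$ and $1$, smooth on $U:=\Delta'\setminus C$ (there it is a local system $\mathcal L$, of rank $n$ say, placed in degree $0$, with $\mathcal H^1(\mathcal F)|_U=0$), with singularities only in $C$, and $\mathbb H^\bullet_c(\Delta',\mathcal F)=\mathbb H^\bullet_c(\Delta',\mathrm{DR}(\mathcal M))$; write $d_c:=\dim\Phi_c$ for the dimension of the vanishing cycle space $\Phi_c=\phi_{t-c}(\mathcal F)$. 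One has to prove $\mathbb H^k_c(\Delta',\mathcal F)=0$ for $k\neq 1$ and $\dim\mathbb H^1_c(\Delta',\mathcal F)=\sum_{c\in C}d_c$.

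I would first record the geometric input coming from the hypothesis on $\Delta'$: the space $\Delta'$ is homeomorphic to a closed half-plane, so $\Delta'$ and $U$ are connected, oriented, non-compact real $2$-manifolds with non-empty boundary. For such a manifold $Y$ and any local system $\mathcal G$ one has $H^k_c(Y,\mathcal G)=0$ for $k<0$, for $k=0$ (a connected non-compact space carries no non-zero compactly supported section of a local system), and for $k\geq 2$ (by Poincaré–Lefschetz duality $H^2_c(Y,\mathcal G)\cong H_0(Y,\partial Y;\mathcal G^\vee)^\vee=0$, each component of $Y$ meeting $\partial Y$), so $H^\bullet_c(Y,\mathcal G)$ sits in degree $1$ only. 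Applied to $U$, together with the multiplicativity of $\chi_c$ over stratifications, this gives $\chi_c(U,\mathcal L)=n\,\chi_c(U)=n\big(\chi(U)-\chi(\partial U)\big)=n\big((1-r)-1\big)=-nr$ (where $r=\#C$), hence $\mathbb H^k_c(U,\mathcal L)=0$ for $k\neq 1$ and $\dim\mathbb H^1_c(U,\mathcal L)=nr$.

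Next I would feed $\mathcal F$ into the fundamental triangle of $\mathbb R\Gamma_c$ attached to the closed set $C\subset\Delta'$ and its open complement $U$,
\[ \mathbb R\Gamma_c(U,\mathcal L)\longrightarrow \mathbb R\Gamma_c(\Delta',\mathcal F)\longrightarrow \bigoplus_{c\in C} i_c^*\mathcal F \xrightarrow{\ +1\ }, \]
and analyse the long exact sequence together with the hypercohomology spectral sequence $E_2^{p,q}=H^p_c(\Delta',\mathcal H^q\mathcal F)\Rightarrow\mathbb H^{p+q}_c(\Delta',\mathcal F)$. Here $\mathcal H^1\mathcal F$ is supported on $C$ and $\mathcal H^0\mathcal F\hookrightarrow\mathcal M$ is torsion-free and equals $j_!\mathcal L$ up to a sheaf supported on $C$; combining this with the surface vanishing of the previous paragraph one gets $\mathbb H^{<0}_c(\Delta',\mathcal F)=0$, $\mathbb H^0_c(\Delta',\mathcal F)=H^0_c(\Delta',\mathcal H^0\mathcal F)=0$, and $\mathbb H^{\geq 2}_c(\Delta',\mathcal F)=0$. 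Thus $\mathbb H^\bullet_c(\Delta',\mathcal F)$ is concentrated in degree $1$, so $\dim\mathbb H^1_c(\Delta',\mathcal F)=-\chi_c(\Delta',\mathcal F)$. Finally $\chi_c(\Delta',\mathcal F)=\chi_c(U,\mathcal L)+\sum_c\chi(i_c^*\mathcal F)=-nr+\sum_c\chi(i_c^*\mathcal F)$, and the vanishing cycle triangle $i_c^*\mathcal F\to\psi_{t-c}\mathcal F\xrightarrow{\mathrm{can}}\phi_{t-c}\mathcal F\xrightarrow{+1}$ gives $\chi(i_c^*\mathcal F)=\dim\psi_{t-c}\mathcal F-\dim\phi_{t-c}\mathcal F=n-d_c$; hence $\chi_c(\Delta',\mathcal F)=-\sum_c d_c$ and $\dim\mathbb H^1_c(\Delta',\mathcal F)=\sum_{c\in C}\dim\Phi_c$, as claimed.

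The step I expect to carry the weight is the vanishing of the top piece $\mathbb H^2_c$: this is exactly where the hypothesis that $\partial\Delta'$ is a (non-empty) arc enters, and it genuinely fails without it — for the closed disc $\overline\Delta$ one has $H^2_c(\overline\Delta,\mathbb C)\neq 0$ and the concentration in a single degree is lost. The remaining care is conventional rather than conceptual: one must fix the normalizations of $\mathrm{DR}$, of the nearby and vanishing cycle functors, and of the symbol $\Phi_c$ so that $\psi_{t-c}\mathcal F$ is the rank-$n$ nearby-cycle space in degree $0$ and $\chi(i_c^*\mathcal F)=\dim\psi_{t-c}\mathcal F-\dim\phi_{t-c}\mathcal F$ holds exactly; with the conventions in force here this is routine, and the argument above is a reprise of Sabbah's proof of \cite{SabND}, Prop. 1.1.6.
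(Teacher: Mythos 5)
Your argument is correct, but there is nothing in the paper to compare it against: the paper's ``proof'' of this lemma is the single sentence referring the reader to \cite{SabND}, Prop.~1.1.6. What you have written is a sound, self-contained reconstruction of the intended argument, and its two pillars are the right ones. First, for a connected non-compact surface with non-empty boundary, every local system has compactly supported cohomology concentrated in degree $1$ (your $H^0_c$ vanishing and the Poincar\'e--Lefschetz vanishing of $H^2_c$); combined with the triangle for $U=\Delta'\setminus C\hookrightarrow\Delta'\hookleftarrow C$ and the fact that $\mathcal H^0\mathcal F$ has no non-zero sections supported on points, this concentrates $\mathbb H^{\bullet}_c\left(\Delta',\mathcal F\right)$ in degree $1$. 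Second, the Euler characteristic bookkeeping $\chi_c\left(\Delta',\mathcal F\right)=\chi_c\left(U,\mathcal L\right)+\sum_{c}\chi\left(i_c^{*}\mathcal F\right)$ together with $\chi\left(i_c^{*}\mathcal F\right)=\dim\psi_{t-c}\mathcal F-\dim\phi_{t-c}\mathcal F$ converts the count into $\sum_{c}\dim\Phi_c$. You are also right to flag that the non-emptiness (and properness) of the boundary arc is what makes $H^2_c$ die; as literally printed the hypothesis would admit an empty arc or the full circle, for which the conclusion fails, but the lemma is only applied to fibres that are discs carrying a non-empty open boundary interval, so your reading is the intended one. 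The one spot where your write-up is looser than it should be is the $\mathbb H^0_c$ step: describing $\mathcal H^0\mathcal F$ as ``torsion-free'' and ``equal to $j_!\mathcal L$ up to a sheaf supported on $C$'' in the same breath is garbled. The clean statement is that a compactly supported section of $\mathcal H^0\mathcal F$ restricts to zero on the connected non-compact set $U$, hence is supported on $C$, hence vanishes because a perverse sheaf in this normalization has no punctual subobjects in $\mathcal H^0$ (equivalently, $\ker\nabla\subset\mathcal M$ is $\mathcal O$-torsion-free). That is a presentational fix, not a gap.
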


\begin{proof}
For a proof we refer to \cite{SabND}.
\end{proof}

As a direct consequence we see:
\begin{theorem}\label{surj} For $\vartheta \in \mathbb S^1$ we have $\operatorname{dim}  \left( \mathcal L_{\leq \psi} \right)_{\vartheta}= \operatorname{dim}  \left( \widetilde{\mathcal L}_{\leq \psi} \right)_{\vartheta}$.
\end{theorem}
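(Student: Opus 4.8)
The plan is to assemble the dimension count from the pieces already in place. First I would recall from Theorem \ref{theorem1} and the reductions in Lemma \ref{lem3} that $\left(\widetilde{\mathcal L}_{\leq\psi}\right)_\vartheta \cong \mathbb H^1\left((\widetilde{p\circ e})^{-1}(\vartheta), \operatorname{DR}^{mod\,(D'_Z)}(e^+\mathcal M \otimes \mathcal E^{g\circ e})\right)$, and that by the definition of $B_\psi^\vartheta$ and the $\beta_{\psi,!}^\vartheta$-extension argument this equals $\mathbb H^1_c\left(B_\psi^\vartheta, \mathcal F_\psi\right)$. So the statement to prove is $\operatorname{dim}(\mathcal L_{\leq\psi})_\vartheta = \operatorname{dim}\mathbb H^1_c(B_\psi^\vartheta, \mathcal F_\psi)$, and the right-hand side is what Lemma \ref{Delta} is designed to compute.

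Next I would check that the hypotheses of Lemma \ref{Delta} are met for the pair $(B_\psi^\vartheta, \mathcal F_\psi)$. By the discussion in the subsection ``Topology of $(\widetilde{p\circ e})^{-1}(\vartheta)$'' the fiber is homeomorphic to a disc (a system of pipes), and by Lemmata \ref{I_0} and \ref{I_1} the set $B_\psi^\vartheta$ is, up to homeomorphism, such a disc together with a connected piece of its boundary — precisely the ``$\Delta'$'' of Lemma \ref{Delta} — with the finite set $C$ being $\mathcal P_\psi^\vartheta = \{P_i, \widetilde P_j\} \cap B_\psi^\vartheta$. The preceding Proposition guarantees that $\mathcal F_\psi$ has cohomology in degree $0$ only away from $\mathcal P_\psi^\vartheta$, i.e. it is (a shift of) a perverse sheaf associated to a regular singular connection $e^+\mathcal M$ with singularities confined to $C$. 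Hence Lemma \ref{Delta} applies and gives $\mathbb H^k_c(B_\psi^\vartheta, \mathcal F_\psi) = 0$ for $k\neq 1$ and $\operatorname{dim}\mathbb H^1_c(B_\psi^\vartheta, \mathcal F_\psi) = \sum_{c\in\mathcal P_\psi^\vartheta}\operatorname{dim}\Phi_c$, the sum of the dimensions of the vanishing cycle spaces of $\operatorname{DR}(e^+\mathcal M)$ (respectively $\operatorname{DR}(\mathcal M)$) at the points $P_i$ and $\widetilde P_j$ lying in $B_\psi^\vartheta$.

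Then I would match this sum against $\operatorname{dim}(\mathcal L_{\leq\psi})_\vartheta$ as given by Corollary \ref{cor1}. By Lemma \ref{lem1}, $P_i\in B_\psi^\vartheta \Leftrightarrow \psi_i\leq_\vartheta\psi$ and $\{\widetilde P_j\}\subset B_\psi^\vartheta \Leftrightarrow 0\leq_\vartheta\psi$; feeding these equivalences into the vanishing-cycle sum reproduces exactly the three cases of Corollary \ref{cor1} (for $\psi=0$ one always has $0\leq_\vartheta 0$ so all $\widetilde P_j$ contribute; for $\psi\neq 0$ the $\widetilde P_j$ contribute precisely on the $\vartheta$-range where $0\leq_\vartheta\psi$). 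Since the formal decomposition Theorem \ref{form_dec} identifies $rk(R_i) = \operatorname{dim}\Phi_{P_i}$ and $rk(R_0) = \sum_j\operatorname{dim}\Phi_{\widetilde P_j}$, the two counts agree term by term, which yields the asserted equality.

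The main obstacle I anticipate is not the bookkeeping but verifying cleanly that $(B_\psi^\vartheta, \mathcal F_\psi)$ genuinely fits the template of Lemma \ref{Delta} — in particular that $B_\psi^\vartheta$ is of the form ``open disc plus one connected boundary arc'' rather than something with extra boundary components or handles, and that the complex $\mathcal F_\psi$ is concentrated in the single degree making it a perverse sheaf in the normalization the lemma expects. This is exactly the content that Lemmata \ref{I_0}, \ref{I_1} and the remark that an open boundary interval of $B_\psi^\vartheta$ lies inside $B_\psi^\vartheta$ are meant to supply, together with the Proposition on degree-$0$ cohomology; so the proof is really just the observation that all the hypotheses have been arranged, and I would present it as a short ``direct consequence'' argument citing those results in sequence.
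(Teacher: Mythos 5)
Your proposal is correct and follows essentially the same route as the paper: the paper's own proof is just the one-line citation of Corollary \ref{cor1} and Lemma \ref{Delta}, and your write-up is the faithful unpacking of that citation via Lemma \ref{lem3}, the identification with $\mathbb H^1_c\left(B_\psi^\vartheta,\mathcal F_\psi\right)$, the topological Lemmata \ref{I_0} and \ref{I_1}, and the matching of the vanishing-cycle counts through Lemma \ref{lem1} and Theorem \ref{form_dec}. No changes needed.
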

		
\begin{proof} This follows from Corollary \ref {cor1} and Lemma \ref{Delta}. 
\end{proof}

\begin{remark} \label{remarknew}
In an analogous way we can determine $\mathcal L$ on $\mathbb S_1$. First let us recall that $\mathcal L= j_{\ast}  \mathcal L'$, whereby $\mathcal L'$ denotes the local system associated to $\mathcal N:= \mathcal H^0 p_+\left(\mathcal M \otimes \mathcal E^{\frac 1 y}\right)$. Consider a small circle $\mathbb S^1_{\epsilon}$ around $0 \in \Delta$. Then we have
 \[ \mathcal L_{\mid S^1} \cong \mathcal L'_{\mid S^1_{\epsilon}}\]
 On $\mathbb S^1_{\epsilon}$ we can identify $\mathcal L' = \mathcal H^0 \operatorname{DR}^{mod \: 0}_{\widetilde {\Delta}} \left(\mathcal N\right)$ (since the growing condition $mod \: 0$ is irrelevant outside of $\partial \widetilde {\Delta})$. Furthermore in this situation, the isomorphism 
 \[\Omega:  {\mathcal H^0 \operatorname{DR}_{\widetilde \Delta}^{mod \: 0}\left(\mathcal N\right)}_{\mid \mathbb S^1_{\epsilon}} \xrightarrow{\cong} {\mathcal H^1 R \widetilde{p}_{\ast} \operatorname{DR}^{mod \: D} \left(\mathcal M \otimes \mathcal E^{\frac 1 y}\right)}_{\mid S^1_{\epsilon}}\]
 holds (analogously to Theorem \ref{theorem1}). 
Now as before we can describe the right hand side in topological terms. Since $\mathcal E^{\frac 1 y}$ is a good (or even holomorphic) connection, we know that $\operatorname{DR}^{mod \: D} \left(\mathcal M \otimes \mathcal E^{\frac 1 y}\right)$ has cohomology in degree zero at most and thus corresponds to a sheaf. By taking the stalk at a point $\rho \in \mathbb S^1_{\epsilon}$ we get 
\[{\left(\mathcal H^0 \operatorname{DR}_{\widetilde \Delta}^{mod \: 0}\left(\mathcal N\right)\right)}_{\rho} \cong H^1_c\left(\widetilde p^{-1}(\rho), \operatorname{DR}^{mod \: D} \left(\mathcal M \otimes \mathcal E^{\frac 1 y}\right)\right). \]
$\widetilde p^{-1}(\rho)$ corresponds to the projective line with a real blow up in the point $\infty$ (denoted by $\mathbb S^1_{\infty}$). We can interpret this space as a disc with boundary $\mathbb S^1_{\infty}$. Furthermore by choosing $\epsilon$ small enough, we know that every irreducible component $S_i$ resp. $\widetilde S_j$ of $SS(\mathcal M)$ meets the (inner of the) disc in exactly one point ($P_i$ resp. $\widetilde P_j$).  \\
The sheaf is supported everywhere away from the boundary, on the boundary its supported on an open hemicycle of $\mathbb S^1_{\infty}$ (i.e. the points where it fulfills the moderate growth condition).  Thus the dimension of $H^1_c\left(\widetilde p^{-1}(\rho), \operatorname{DR}^{mod \: D} \left(\mathcal M \otimes \mathcal E^{\frac 1 y}\right)\right)$ depends on the dimension of the vanishing cycle spaces in the points $P_i$ and $\widetilde P_j$:
\[ \operatorname{dim}\:  H^1_c\left(\widetilde p^{-1}(\rho), \operatorname{DR}^{mod \: D} \left(\mathcal M \otimes \mathcal E^{\frac 1 y}\right)\right) = \operatorname{dim} \sum_{i \in I} \Phi_{P_i} + \sum _{j \in J} \Phi_{\widetilde P_j}.\] 
\end{remark}
We will use this topological description in the following chapter,  supply it to the explicit example an determine a set of linear Stokes data.

\vspace*{1cm}

\section {Proof of Theorem \ref{theorem2}: Explicit example for the determination of Stokes data }
As already mentioned in the introduction, we will consider the following explicit example: \\
Let $X= \Delta \times \mathbb P^1$ and $\mathcal M$ a meromorphic connection on $X$ of rank $r$ with regular singularities  along its divisor. Let the singular locus $SS \left( \mathcal M\right)$ be of the following form:  $SS \left( \mathcal M\right)= \{ t \cdot y \cdot  \left( t-y\right) \cdot x=0 \}$. Denote the irreducible components  ($S_1: y=t$) and ($\widetilde S_1: x=0$).

	\begin{center}
		\begin{tikzpicture}[domain=-1.3:1.3] 
		\draw[-] ( 0,-4.2) -- ( 0,0.2) node[above] {};
		\draw [color=red] [-] ( -2,-4) -- ( 2,-4) node[right] {$\Delta \times \{0\}= \widetilde S_1$};
		\draw[-] ( -2,0) -- ( 2,0) node[right]{$\Delta \times \{ \infty\}$};
  		\draw[color=blue] plot ( \x,{ -\x }) node[right] {$S_1$};
		\end{tikzpicture}
	\end{center}
The following chapter is devoted to prove Theorem \ref{theorem2} and thereby  determine a set of Stokes data to $\mathcal H^0 p_+ \left( \mathcal M \otimes \mathcal E^{\frac 1 y}\right)$. 


\subsection{Stokes-filtered local system}

According to Theorem \ref{form_dec},   $\hat {\mathcal N} = \mathcal H^0 p_+ \left( \mathcal M \otimes \mathcal E^{q}\right)^{\wedge}_0$ can be decomposed to: 
\[  \hat {\mathcal N}\cong R_0 \oplus  \left( R_1 \otimes \mathcal E^{\frac 1 t}\right)\]
whereby $rk \left( R_0\right)= rk \left( R_1\right)=r$. Now we will describe the Stokes structure via the isomorphism  $\Omega$ (Theorem \ref{theorem1}). $\Omega$ identifies $ \left( \mathcal L_{\leq\psi}\right)_{\vartheta}  \cong H^1 \left(  \left( \widetilde{p \circ e}\right)^{-1} \left( \vartheta\right), \beta^{\vartheta}_{\psi, !} \mathcal F_{\psi}\right)$.

\begin{lemma} \label{BU} There exists a sequence of point blow ups $e: Z \to X$, such that 
	\begin{enumerate}
		\item the singular support of $\mathcal M$ becomes a normal crossing divisor
		\item for both $\psi=0$ and $\psi= \frac 1 t$ the exponent $g\circ e$ is holomorphic or good in every point. 
	\end{enumerate}
\end{lemma}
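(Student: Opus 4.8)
The idea is to apply Lemma~\ref{lemma1} separately to the two exponents $g_0(t,y)=\tfrac1y$ (the case $\psi=0$) and $g_1(t,y)=\tfrac1y-\tfrac1t=\tfrac{t-y}{ty}$ (the case $\psi=\tfrac1t$), and then to observe that the two resulting blow-up sequences admit a common refinement which does not destroy normal crossings. In the situation at hand $SS(\mathcal M)=\{t\cdot y\cdot(t-y)\cdot x=0\}$ fails to be normal crossing only at the point $(0,\infty)$, where the three smooth curves $D=\{t=0\}$, $\Delta\times\{\infty\}=\{y=0\}$ and $S_1=\{y=t\}$ meet; near the intersection of $\widetilde S_1=\{x=0\}$ with $D$ only the two coordinate lines $\{t=0\}$ and $\{x=0\}$ occur, so there $SS(\mathcal M)$ is already normal crossing and, moreover, both $g_0=x$ and $g_1=x-\tfrac1t$ are already holomorphic or good. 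Hence every centre of blow-up may be taken above $(0,\infty)$.

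First I would blow up the point $(0,\infty)$. In the chart $t=u_1,\ y=u_1v_1$ the exceptional curve is $\{u_1=0\}$ and the strict transforms of $\{y=0\}$ and of $S_1$ are $\{v_1=0\}$ and $\{v_1=1\}$; in the chart $t=u_2v_2,\ y=v_2$ the exceptional curve is $\{v_2=0\}$ and the strict transform of $\{t=0\}$ is $\{u_2=0\}$. Thus after this one blow-up $SS(\mathcal M)$ is a normal crossing divisor, which gives~(1); in particular the strict transform of $S_1$ meets the exceptional curve in the single point $P_1$, given in the first chart by $(u_1,v_1)=(0,1)$ (equivalently $(u_2,v_2)=(1,0)$), as in Lemma~\ref{lemma1}(2). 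For $\psi=0$ one reads off $g_0\circ e=\tfrac1{u_1v_1}$, resp.\ $\tfrac1{v_2}$, which by Lemma~\ref{lem2} is good (resp.\ holomorphic off the divisor) at every point, so (2) already holds for $\psi=0$ after this single blow-up.

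For $\psi=\tfrac1t$ the pullback is $g_1\circ e=\tfrac{1-v_1}{u_1v_1}$ in the first chart and $\tfrac{u_2-1}{u_2v_2}$ in the second, which is good except at $P_1$, where the zero divisor of $g_1\circ e$ — namely the strict transform of $S_1$ — crosses the pole divisor. Following the recipe of Lemma~\ref{lemma1} I would translate the coordinate by $v'=v_1-1$ and then blow up $P_1$; in the two charts $(a,b)$ of this blow-up, with $(u_1,v')=(a,ab)$ resp.\ $(a'b',b')$, one computes $g_1\circ e=\tfrac{-b}{1+ab}$ resp.\ $\tfrac{-1}{a'(1+b')}$ and $g_0\circ e=\tfrac1{a(1+ab)}$ resp.\ $\tfrac1{a'b'(1+b')}$, all of which are holomorphic or good; and since $P_1$ is a transverse intersection point of two components of a normal crossing divisor, blowing it up preserves the normal crossing property. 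The composite $e$ of these two point blow-ups therefore satisfies (1) and (2).

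The only genuine computation is the chart-by-chart verification of the goodness criterion of Lemma~\ref{lem2}; the point that has to be made explicit — rather than a real obstacle — is that the single extra blow-up forced by $\psi=\tfrac1t$ is centred at $P_1$, a point over which $g_0=\tfrac1y$ remains good and at which normal crossings are not lost, so that one and the same $e$ works simultaneously for both exponents.
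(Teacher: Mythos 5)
Your proof is correct and takes essentially the same approach as the paper, whose own proof consists precisely of these two point blow-ups (one at $(t,y)=(0,0)$ and one at the point $(0,1)$ where the strict transform of $S_1$ meets the exceptional curve), exhibited via a picture. Your chart-by-chart verification of the goodness of $g_0\circ e$ and $g_1\circ e$ and of the preservation of normal crossings simply supplies the computations the paper leaves implicit.
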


\begin{proof} 
The following blow up of the divisor satisfies the requested conditions:
\vspace*{0.3cm}
\begin{center}
\includegraphics[scale=0.8]{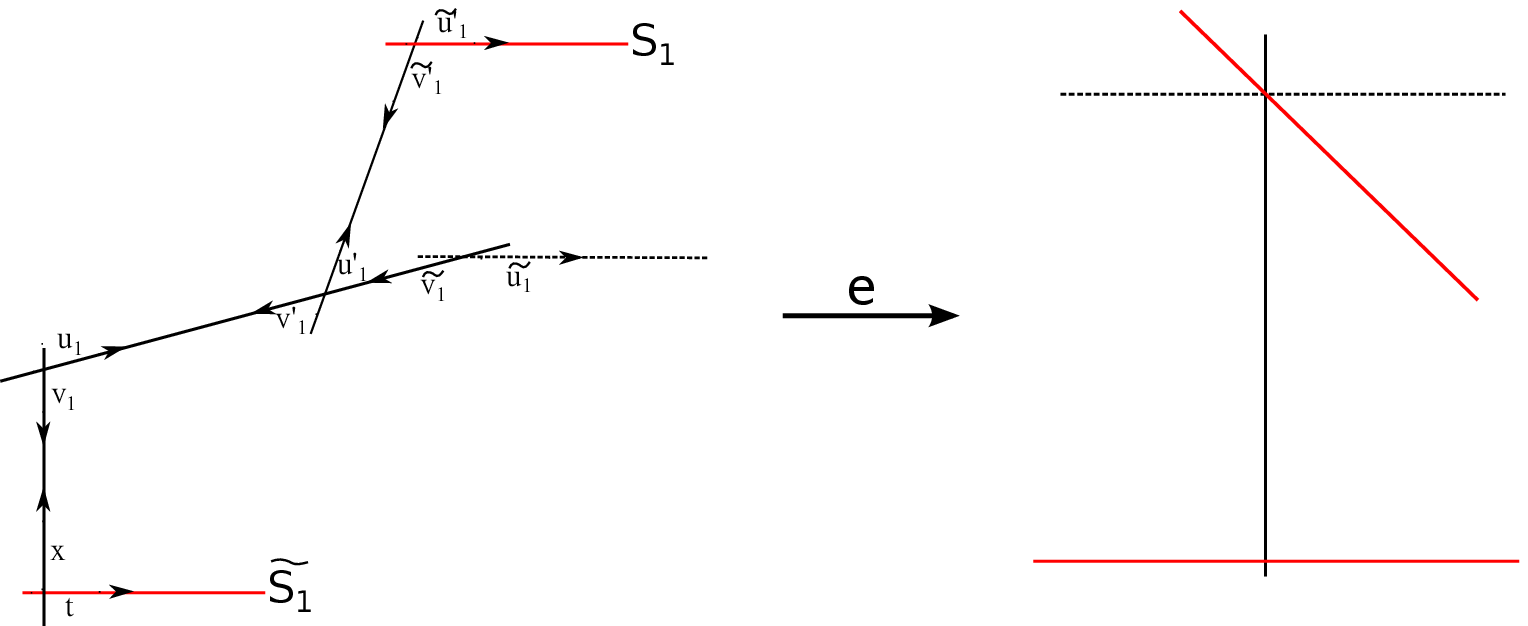}
\end{center}
(i.\:e. a point-blow-up in $(t,y)=0$ and a second point-blow-up in $(\tilde u_1, \tilde v_1)=(0,1)$)
\end{proof}

We will denote the resulting normal crossing divisor by $\overline D_Z$. The fiber over $\vartheta$ with respect to the blow up along $\overline D_Z$ is  homeomorphic to a closed disc with two `holes'. We will denote it by $\overline A \times \{ \vartheta \}$.

\begin{lemma}\label{DRlemma} $\operatorname{DR}^{mod \: \overline D_Z} \left( e^+ \mathcal M \otimes \mathcal E^{g\circ e}\right)$ has cohomology in degree $0$ at most. 
\end{lemma}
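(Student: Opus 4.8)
The plan is to show that $\operatorname{DR}^{mod\:\overline D_Z}\bigl(e^+\mathcal M\otimes\mathcal E^{g\circ e}\bigr)$ has no cohomology outside degree $0$ by a local computation on $Z$, distinguishing points according to their position relative to the normal crossing divisor $\overline D_Z$. First I would recall that $e^+\mathcal M$ is a meromorphic connection with regular singularities along $\overline D_Z$ (regularity is preserved by the pullback $e^+$ along the sequence of blow-ups), and that by Lemma~\ref{lemma1} together with Lemma~\ref{BU} the function $g\circ e$ is holomorphic or good at every point of $\overline D_Z$; hence $e^+\mathcal M\otimes\mathcal E^{g\circ e}$ is, locally at each point of $\overline D_Z$, either a regular singular meromorphic connection (where $g\circ e$ is holomorphic) or an elementary/good meromorphic connection of the form (regular)$\otimes\mathcal E^{g\circ e}$ with $g\circ e$ good.

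Next I would invoke the structure theory of the moderate de Rham complex of good meromorphic connections, as developed by Sabbah: the key input is \cite{Sab}, Corollary~11.22 (already used in the proof of the Proposition that $\mathcal F_\psi$ has cohomology in degree $0$ on $B_\psi^\vartheta\setminus\mathcal P_\psi^\vartheta$), which says that for a good meromorphic connection the moderate de Rham complex on the real blow-up is concentrated in degree $0$. Since $\overline D_Z$ is a normal crossing divisor everywhere on $Z$ by Lemma~\ref{BU}(1) — in particular the non-normal-crossing points $P_i$, $\widetilde P_j$ that appeared in the $D_Z$-picture have been resolved — the goodness/holomorphy hypothesis holds at \emph{every} point of $\overline D_Z$, so there is no exceptional locus to worry about and Corollary~11.22 applies uniformly. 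Away from $\overline D_Z$ the complex $\operatorname{DR}(e^+\mathcal M\otimes\mathcal E^{g\circ e})$ is just the de Rham complex of a flat connection on a $2$-dimensional complex manifold restricted to the blow-up, which is (a shift of) a local system concentrated in the appropriate single degree; with the perverse normalization in force here this again gives concentration in degree $0$.

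The main obstacle I anticipate is bookkeeping around the normalization and the degree shift: one must be careful that the convention for $\operatorname{DR}$ used throughout (so that $\mathcal F_\psi$ is a perverse sheaf with $\mathcal H^i=0$ for $i\neq 0,1$ on the $1$-dimensional fiber) is consistent with the claim that on the $2$-dimensional space $\widetilde Z(\overline D_Z)$ the complex $\operatorname{DR}^{mod\:\overline D_Z}(e^+\mathcal M\otimes\mathcal E^{g\circ e})$ sits in degree $0$ only. I would handle this by checking the statement in local coordinates at the three types of points — a smooth point of $\overline D_Z$, a crossing point of $\overline D_Z$, and a point of the complement — using the explicit description of $\mathcal A^{mod\:\overline D_Z}$ and the fact that $g\circ e$ has the normal form $\tfrac{1}{u^m v^n}\beta(u,v)$ with $\beta(0,v)\neq 0$; in each case the de Rham complex of the corresponding elementary connection is a two-term (in the surface case) Koszul-type complex whose higher cohomology vanishes because the growth condition forces the relevant differential to be injective. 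Finally I would assemble these local statements: since having cohomology in degree $0$ at most is a local property of a complex of sheaves on $\widetilde Z(\overline D_Z)$, the local vanishing at all points yields the global claim, completing the proof.

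\begin{proof}
Since $e$ is a composition of point blow-ups, $e^+\mathcal M$ is again a meromorphic connection with regular singularities, now along the normal crossing divisor $\overline D_Z$ (Lemma~\ref{BU}(1)). By Lemma~\ref{BU}(2) the exponent $g\circ e$ is holomorphic or good at every point of $\overline D_Z$, so locally at each such point $e^+\mathcal M\otimes\mathcal E^{g\circ e}$ is either a regular meromorphic connection or an elementary connection $\mathcal R\otimes\mathcal E^{g\circ e}$ with $\mathcal R$ regular and $g\circ e$ good. As $\overline D_Z$ is everywhere normal crossing, there is no exceptional non-normal-crossing locus, and we may apply \cite{Sab}, Cor.~11.22 at every point of $\overline D_Z$: the moderate de Rham complex of a good (resp. regular) meromorphic connection on the real blow-up is concentrated in degree $0$. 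Away from $\overline D_Z$ the complex is the de Rham complex of a flat connection on the complement, which with the normalization in force is likewise concentrated in degree $0$. Since concentration in degree $0$ is a local property on $\widetilde Z(\overline D_Z)$, the claim follows.
\end{proof}
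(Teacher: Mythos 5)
Your proof is correct and takes essentially the same route as the paper: the paper's own argument is exactly that $\overline D_Z$ is normal crossing and $g\circ e$ is good or holomorphic by Lemma~\ref{BU}, whence the claim follows from \cite{Sab}, Prop.~8.15 and Cor.~11.22. The only cosmetic difference is that the paper additionally cites Prop.~8.15 alongside Cor.~11.22 to cover the degree-$0$ concentration for the regular/holomorphic part, which your local case analysis handles in substance anyway.
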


\begin{proof}
Since $\overline D_Z$ is normal crossing and $g \circ e $ is good or holomorphic  (Lemma \ref{BU}) the claim follows by  \cite{Sab}, Prop. 8.15 and Cor. 11.22.
\end{proof}
\\
Consider the map $\kappa: \widetilde Z \left( \overline D_Z\right) \to \widetilde Z \left( D_Z\right)$. Restricting it to a fiber $\kappa_{\vartheta}: \overline A \times \{ \vartheta\}  \to  \left( \widetilde{p \circ e}\right)^{-1} \left( \vartheta\right)$, it is just the identity except at the points $\widetilde P_1$ and $P_1$, where it describes the "collapse" of the real blow ups of  $\widetilde P_1$ and $P_1$ back to these points.  $\operatorname{DR}^{mod \: \overline D_Z} \left( e^+ \mathcal M \otimes \mathcal E^{g\circ e}\right)$ has cohomology in degree $0$ at most and therefore corresponds to a local system $\mathcal G_{\psi}$.  Then $\operatorname{DR}^{mod \: D_Z} \left( e^+ \mathcal M \otimes \mathcal E^{g\circ e}\right)$ corresponds to the perverse sheaf $\mathcal F_{\psi}$ given by $0 \to \kappa_{\ast} \mathcal G_{\psi} \to 0 \to 0$. Let $\mathcal G_{\psi}^{\vartheta}$ the restriction to the support $\kappa^{-1} \left(  B_{\psi}^{\vartheta}\right)$ in the fiber $\overline A \times \{ \vartheta\}$ and  $\overline{\beta}^{\vartheta}_{\psi}: \kappa^{-1} \left( B_{\psi}^{\vartheta}\right) \hookrightarrow \overline A \times \{\vartheta\}$ the open inclusion. Because of Proposition \ref{prop1} we have:
\[ H^1 \left( \overline A \times \{\vartheta\}, \overline{\beta}^{\vartheta}_{\psi, !} \mathcal G_{\psi}^{\vartheta} \right) \cong  H^1 \left(   \left( \widetilde{p \circ e}\right)^{-1} \left( \vartheta\right), \beta^{\vartheta}_{\psi, !} \mathcal F_{\psi}^{\vartheta} \right) \]
Furthermore we denote by $\mathcal K$ the local system  on $\widetilde Z \left( \overline D_Z\right)$ corresponding to the pullback connection $e^+ \mathcal M$ of rank $rk  \left( \mathcal M\right)$ (see Cor. 8.3 in \cite{Sab}) and by $\mathcal K^{\vartheta}$ its restriction to  $\overline A \times \{\vartheta\}$. Then obviously $ \overline{\beta}^{\vartheta}_{\psi, !} \mathcal G^{\vartheta}_{\psi} $ equals $\overline{\beta}^{\vartheta}_{\psi, !} {\overline{\beta}^{\vartheta}_{\psi}}^{-1} \mathcal K^{\vartheta}$ for all $\psi$. For brevity we will write  $\overline{\beta}^{\vartheta}_{\psi, !} \mathcal K^{\vartheta}$ instead of  $\overline{\beta}^{\vartheta}_{\psi, !} {\overline{\beta}^{\vartheta}_{\psi}}^{-1} \mathcal K^{\vartheta}$ in the following.
 Thus it is enough to examine $ \left( \mathcal L_{\leq\psi}\right)_{\vartheta}  \cong H^1 \left(  \overline A \times \{\vartheta\}, \overline{\beta}^{\vartheta}_{\psi, !} \mathcal K^{\vartheta}\right)$. With the blow up $e$ constructed in the proof of  Lemma \ref{BU} we can determine the open subset $B^{\psi}_{\vartheta} \subset  \left( \widetilde{ p \circ e}\right)^{-1} \left( \vartheta\right)$ and
we receive the following pictures, which show $ \left( \widetilde{p\circ e}\right)^{-1} \left( \vartheta\right)$ (resp. $\overline A \times \{ \vartheta \}$) and the subsets $B_0^{\vartheta}$, $B_{\frac 1 t}^{\vartheta}$. One can see very clearly that by passing the Stokes directions $±\frac {\pi} 2$, the relation of the subsets  $B_0^{\vartheta}$ and $B_{\frac 1 t}^{\vartheta}$ changes from $B_0^{\vartheta} \subset B_{\frac 1 t}^{\vartheta}$ to $B_{\frac 1 t}^{\vartheta} \subset B_0^{\vartheta}$ and vice versa.

	$$\begin{xy}
		\xymatrixrowsep{0.85cm}
		\xymatrixcolsep{0.2cm}
 		\xymatrix
		{
		\vartheta = 0 & \vartheta = \frac {\pi} 2- \epsilon & \vartheta = \frac {\pi} 2 +\epsilon & \vartheta = \pi \\
		\includegraphics[scale=0.3]{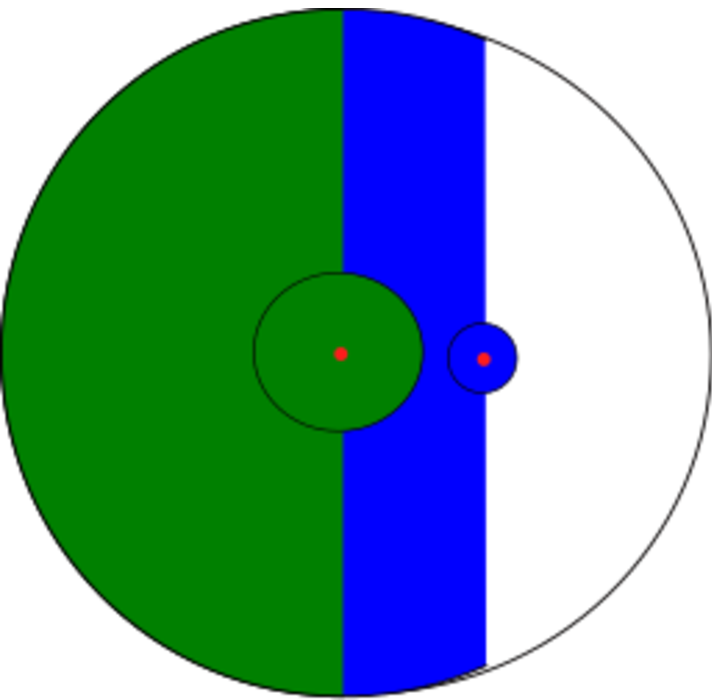} & \includegraphics[scale=0.3]{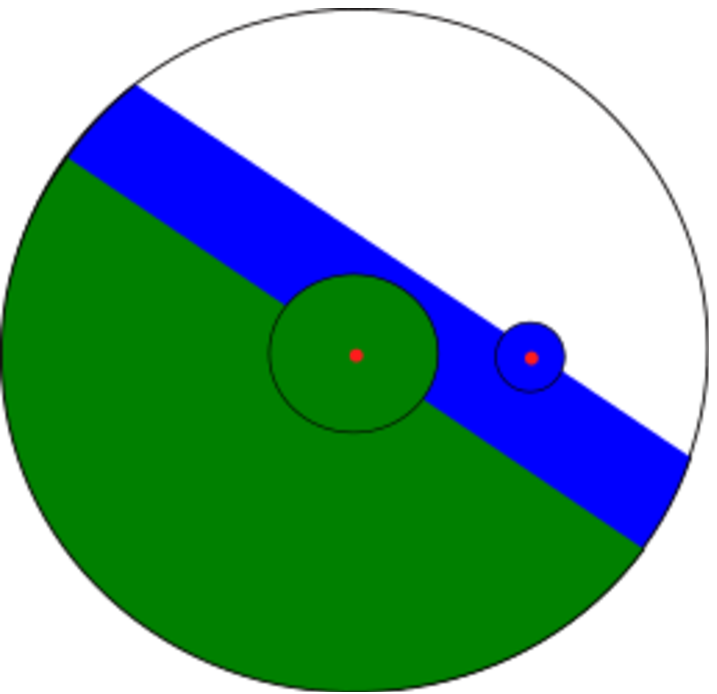}& \includegraphics[scale=0.3]{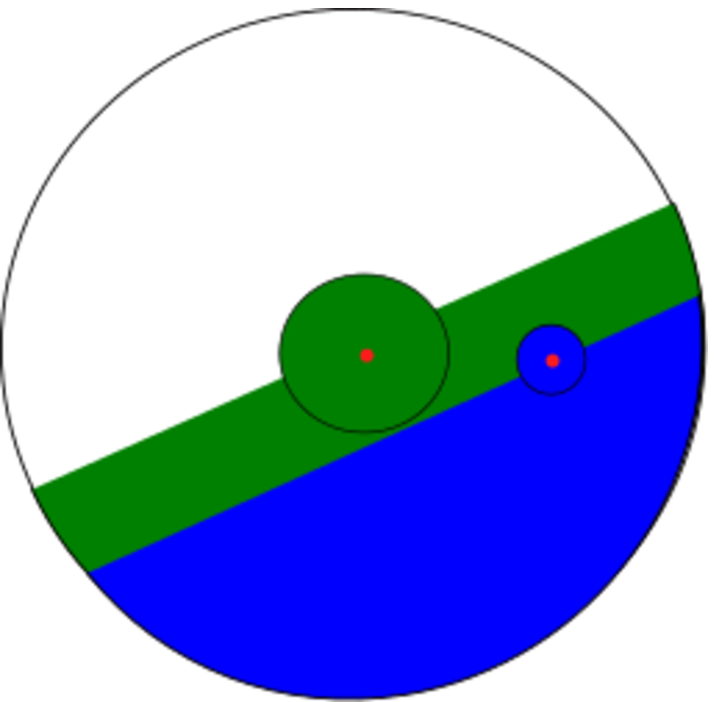} &\includegraphics[scale=0.3]{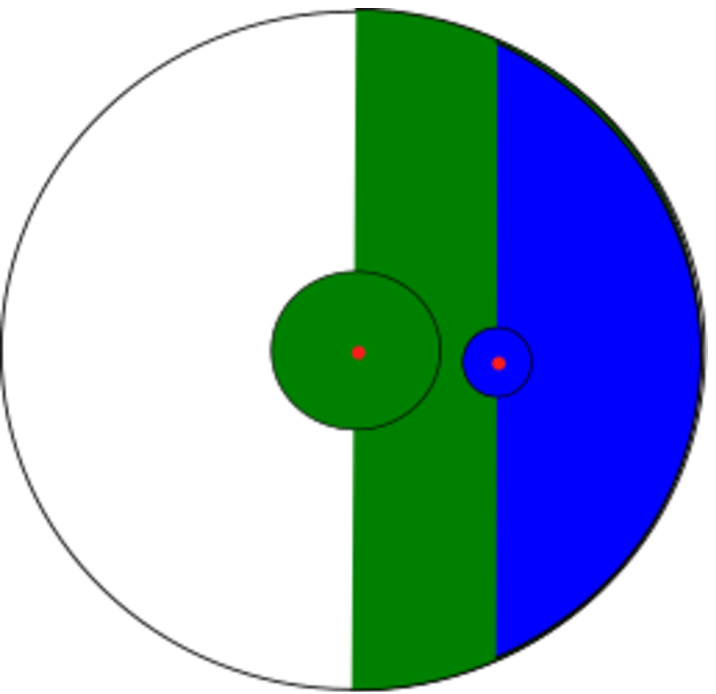}
 		 }
	\end{xy}$$

	$$\begin{xy}
		\xymatrixrowsep{0.85cm}
		\xymatrixcolsep{0.2cm}
 		\xymatrix
		{
		\vartheta = \pi & \vartheta = \frac {3\pi} 2- \epsilon & \vartheta = \frac {3\pi} 2 +\epsilon & \vartheta = 2\pi\\
		\includegraphics[scale=0.3]{I1_pi.eps} & \includegraphics[scale=0.3]{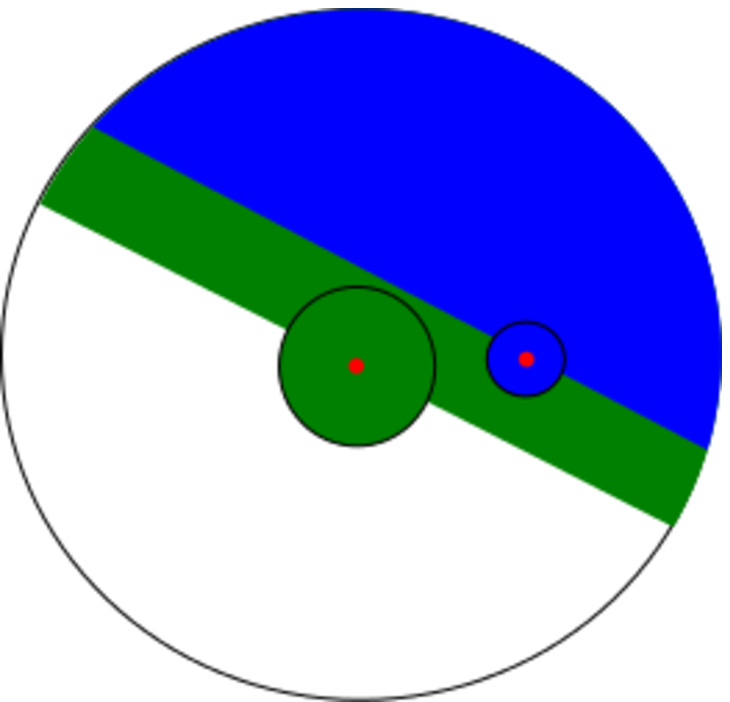} & \includegraphics[scale=0.3]{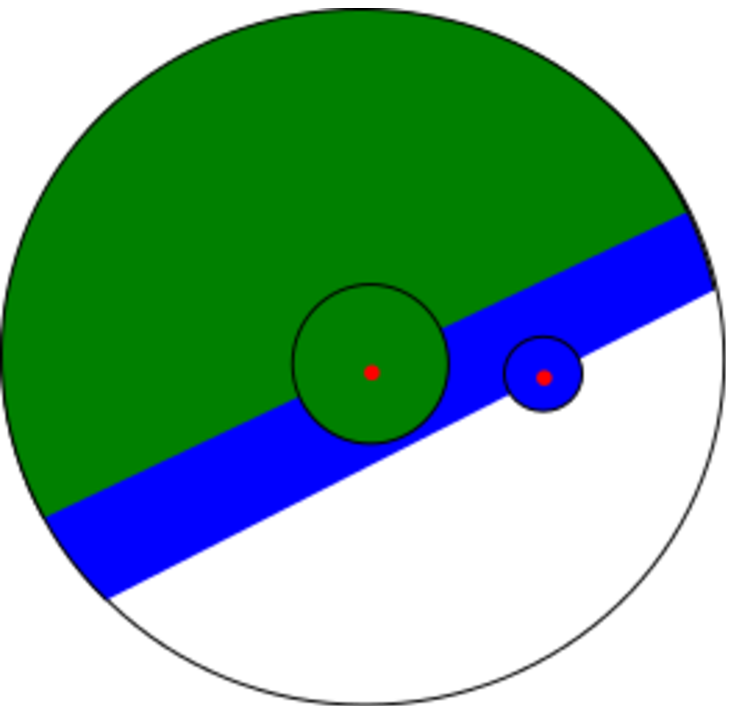} & \includegraphics[scale=0.3]{I1_0.eps}
	 	 }
	\end{xy}$$
	
In this situation we can compute the first cohomology groups $H^1 \left( \overline A \times \{\vartheta\}, \overline{\beta}^{\vartheta}_{\psi, !} \mathcal K^{\vartheta}\right)$ in another way, namely by using \u{C}ech cohomology. \\
Therefore consider the following curves $\alpha_1, \alpha_2, \alpha_3, \alpha_4$ in $\overline A$: 
	\begin{center}
		\includegraphics[scale=0.3]{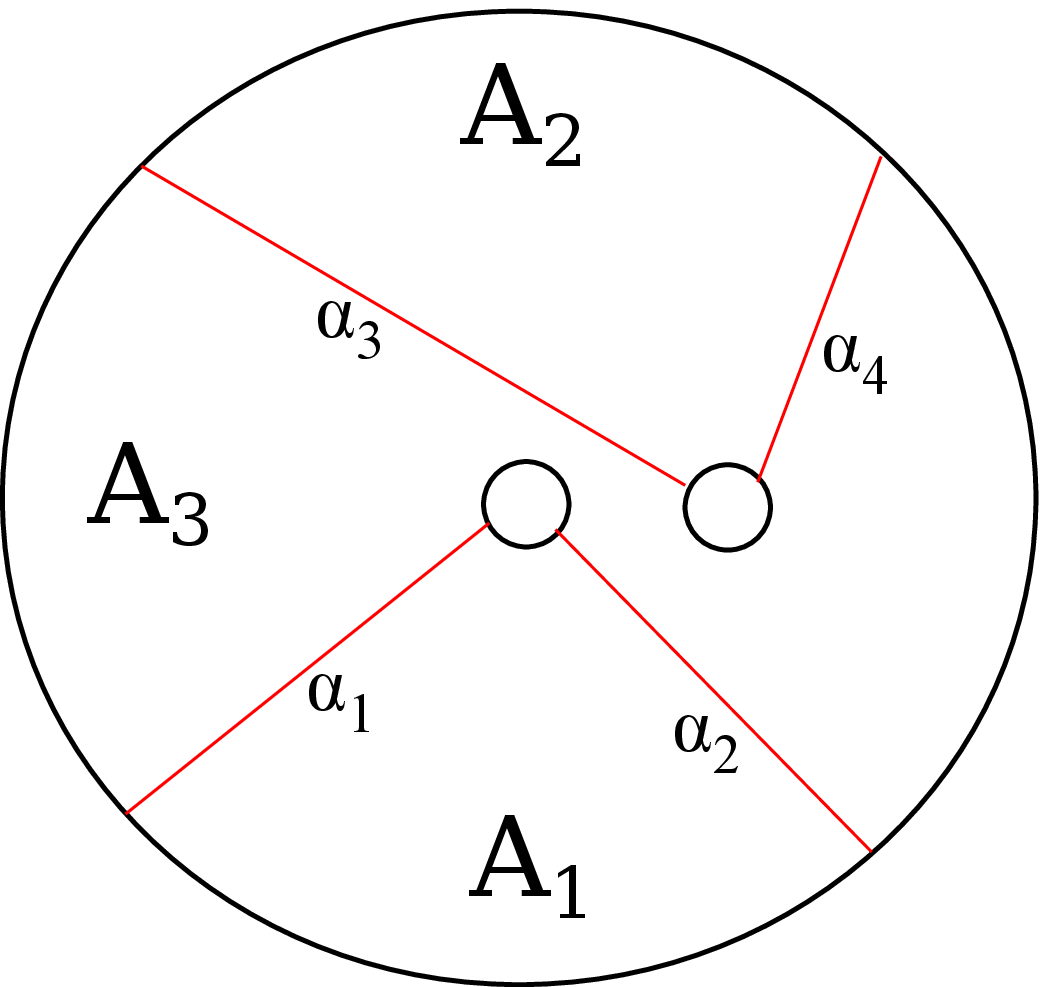} 
	\end{center}
Then for $\vartheta_0 \in [0, \frac{\pi} 2)$ the closed covering $\mathfrak A = A_1 \cup A_2 \cup A_3$ of $\overline A$ defines a Leray covering of  $\beta^{\vartheta_0}_{\psi, !} \mathcal K^{\vartheta_0}$ and for $\beta^{\vartheta_1}_{\psi, !} \mathcal K^{\vartheta_1}$, whereby $\vartheta_1 := \vartheta_0 + \pi$.  This can be proved easily by using the exact sequence of Chapter 3.3.4 and doing the same calculations for the restriction to the intersections $\alpha_k$. \\
For $\vartheta_0  \in [\frac {\pi} 2, \pi)$ the following curves define a Leray covering for  $\beta^{\vartheta_0}_{\psi, !} \mathcal K^{\vartheta_0}$ and $\beta^{\vartheta_1}_{\psi, !} \mathcal K^{\vartheta_1}$:
	\begin{center}
		\includegraphics[scale=0.3]{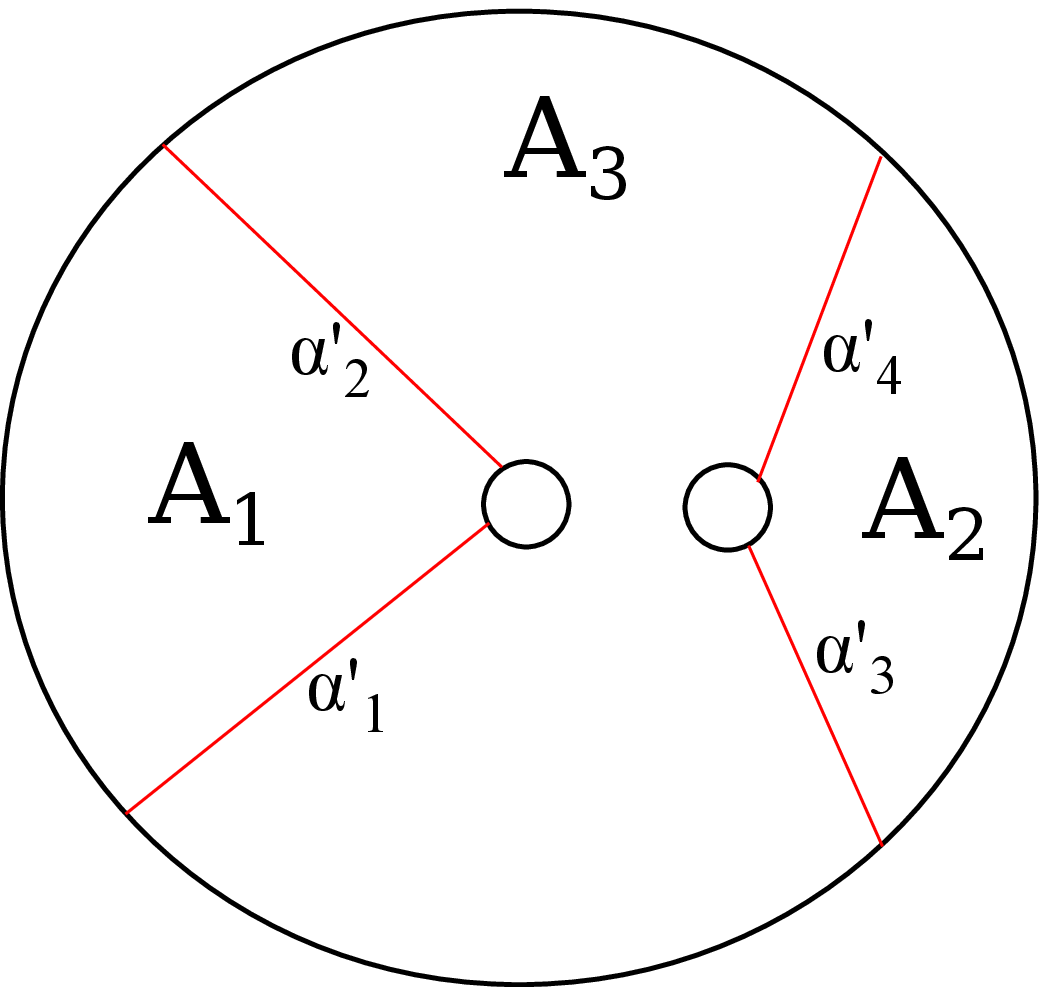} 
	\end{center}
Thus we can conclude: 
\begin{lemma} For every pair of angles $ \left( \vartheta_0, \vartheta_1:= \vartheta_0+ \pi\right)$ of $\mathbb S^1$ the construction above defines a closed covering $\mathfrak A$ of $\overline A$, such that $\mathfrak A$ is a common Leray covering of  $\beta^{\vartheta_i}_{\psi, !} \mathcal K^{\vartheta_i}$ ($i=1,2$ and $\psi= 0, \frac 1 t$).  Consequently we get an isomorphism 
\[H^1 \left( \overline A \times \{\vartheta_i\}, \overline{\beta}^{\vartheta_i}_{\psi,!} \mathcal K^{\vartheta_i}\right) \rightarrow \check H^1 \left( \mathfrak A,\overline{\beta}^{\vartheta_i}_{\psi, !} \mathcal K^{\vartheta_i}\right)\]
\end{lemma}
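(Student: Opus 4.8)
The plan is to invoke the Leray theorem comparing \v{C}ech and sheaf cohomology: for a finite closed covering of the compact space $\overline A \times \{\vartheta_i\}$ the natural map $\check H^\bullet(\mathfrak A, -) \to H^\bullet(\overline A \times \{\vartheta_i\}, -)$ is an isomorphism in every degree as soon as every member of $\mathfrak A$ and every nonempty finite intersection of members is acyclic for the sheaf in question, i.e.\ has vanishing cohomology in positive degrees. Thus the real content of the lemma is this acyclicity, and the displayed isomorphism on $H^1$ follows formally. Since the curves $\alpha_1,\dots,\alpha_4$ cut $\overline A$ into the three closed pieces $A_1,A_2,A_3$ whose pairwise intersections are unions of closed sub-arcs of the $\alpha_k$ and whose triple intersection is empty or reduces to a point, I only have to examine three shapes of test set $Y$ --- a closed piece $A_k$, an arc, and a point --- and this for each $\psi\in\{0,\tfrac 1 t\}$ and for both angles of the pair $(\vartheta_0,\vartheta_1)$.

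For a fixed such $Y$, write $U := Y \cap \kappa^{-1}(B_\psi^{\vartheta_i})$ for the open locus where $\overline{\beta}^{\vartheta_i}_{\psi,!}\mathcal K^{\vartheta_i}$ is supported inside $Y$, and $Z := Y \setminus U$ for its closed complement in $Y$. Feeding the restriction to $Y$ of the excision exact sequence already used in the computation of $\mathbb H^1_c(B_\psi^\vartheta,\mathcal F_\psi)$,
\[ 0 \longrightarrow \overline{\beta}^{\vartheta_i}_{\psi,!}\mathcal K^{\vartheta_i}|_Y \longrightarrow \mathcal K^{\vartheta_i}|_Y \longrightarrow i_{Z,*}\bigl(\mathcal K^{\vartheta_i}|_Z\bigr) \longrightarrow 0, \]
into its long exact cohomology sequence reduces the problem to the cohomology of the rank-$r$ local system $\mathcal K^{\vartheta_i}$ on $Y$ and on the closed subset $Z$. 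The curves $\alpha_k$ were drawn so as to avoid the singular points $P_1,\widetilde P_1$ and to meet $\partial B_\psi^{\vartheta_i}$ transversally; hence for every combination of $\psi$ and $\vartheta_i$ each of $Y$, $U$ and $Z$ is contractible --- a point, an arc, a closed disc, or a closed disc with an open boundary arc removed --- and on a contractible space a local system has no cohomology above degree $0$. The long exact sequence then forces $H^{>0}(Y,\overline{\beta}^{\vartheta_i}_{\psi,!}\mathcal K^{\vartheta_i}) = 0$. The passage from $\vartheta_0$ to $\vartheta_1 = \vartheta_0 + \pi$ is exactly why there are two pictures: moving to the antipodal angle only interchanges the inclusion $B_0^\vartheta\subset B_{1/t}^\vartheta$ with $B_{1/t}^\vartheta\subset B_0^\vartheta$, and the covering is set up symmetrically under this interchange, so the same contractibility check applies word for word.

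The delicate point is precisely this contractibility bookkeeping along the boundary circle $\mathbb S^1_\infty$: by the remark preceding the lemma an open interval of $\partial B_\psi^\vartheta$ actually lies in $B_\psi^\vartheta$, so the traces $U = Y\cap\kappa^{-1}(B_\psi^\vartheta)$ are half-open near that part of the boundary and one must make sure that the extension by zero creates no spurious class in $H^1$ of a member or --- what would actually break the argument --- in $H^1$ of an intersection. This is controlled by the explicit description of $B_0^\vartheta$ and $B_{1/t}^\vartheta$ obtained above together with the placement of the four curves: for $\vartheta_0\in[0,\tfrac\pi2)$ the first family keeps all traces $U$ connected and simply connected, and at the Stokes direction $\tfrac\pi2$, where the relation between $B_0^\vartheta$ and $B_{1/t}^\vartheta$ flips, one switches to the second family of curves to preserve this for $\vartheta_0\in[\tfrac\pi2,\pi)$. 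Running through this finite list of cases --- the ``same calculation'' referred to above --- establishes acyclicity of all members and intersections, whence the lemma.
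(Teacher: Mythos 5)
Your proposal is correct and follows essentially the same route as the paper, which justifies the lemma only by the remark that it ``can be proved easily by using the exact sequence [$0 \to \overline{\beta}^{\vartheta}_{\psi,!}\mathcal K^{\vartheta} \to \mathcal K^{\vartheta} \to i_{Z,\ast}(\mathcal K^{\vartheta}|_Z) \to 0$] and doing the same calculations for the restriction to the intersections $\alpha_k$'' --- i.e.\ exactly the acyclicity check on members and intersections that you carry out. Your write-up is in fact more explicit than the paper's about the two genuinely delicate points (the surjectivity of $H^0(Y,\mathcal K)\to H^0(Z,\mathcal K|_Z)$ needed to kill $H^1$ of the extension by zero, and the half-open behaviour of the traces along $\mathbb S^1_\infty$), so no gap.
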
   

In the following lemma we will compute the cohomology groups concretely for $\vartheta=0$ and $\vartheta= \pi$. 

\begin{lemma} \label{LC} 
	\begin{itemize}
		\item $H^1 \left( \overline A \times \{0\}, \overline{\beta}_{0, !}^{0} \mathcal K^{0}\right) \cong \mathcal K^0_{x_1}$
		\item $H^1 \left( \overline A \times \{0\},  \overline{\beta}_{0, !}^{\frac 1 t} \mathcal K^{0}\right)=H^1 \left( \overline A \times \{0\},  \overline{\beta}^0_ ! \mathcal K^{0}\right) 			\cong \mathcal K^0_{x_1}\oplus \mathcal K^0_{x_3}$
		\item $H^1 \left( \overline A \times \{\pi\},  \overline{\beta}_{\pi, !}^{\frac 1 t} \mathcal K^{\pi}\right) \cong \mathcal K^{\pi}_{x_4}$ 
		\item $H^1 \left( \overline A \times \{\pi\},  \overline{\beta}_{\pi, !}^{0} \mathcal K^{\pi}\right) =H^1 \left( \overline A \times \{\pi\},  \overline{\beta}^{\pi}_! \mathcal K^{\pi}\right) 			\cong \mathcal K^{\pi}_{x_2} \oplus \mathcal K^{\pi}_{x_4}$
	\end{itemize}
\end{lemma}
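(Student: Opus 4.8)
The plan is to compute each of the four groups directly from the \v{C}ech complex of the Leray covering $\mathfrak A=\{A_1,A_2,A_3\}$ furnished by the previous lemma. Since $\mathfrak A$ is a common Leray covering for all the sheaves $\overline\beta^{\vartheta}_{\psi,!}\mathcal K^{\vartheta}$ in question, we have $H^1(\overline A\times\{\vartheta\},\overline\beta^{\vartheta}_{\psi,!}\mathcal K^{\vartheta})\cong\check H^1(\mathfrak A,\overline\beta^{\vartheta}_{\psi,!}\mathcal K^{\vartheta})=\ker d^1/\operatorname{im}d^0$, where, writing $\mathcal F=\overline\beta^{\vartheta}_{\psi,!}\mathcal K^{\vartheta}$,
\[ \check C^0=\bigoplus_{k}\Gamma(A_k,\mathcal F),\qquad \check C^1=\bigoplus_{k<l}\Gamma(A_k\cap A_l,\mathcal F),\qquad \check C^2=\Gamma(A_1\cap A_2\cap A_3,\mathcal F). \]
The first step is to make these three terms explicit for each of the four pairs $(\vartheta,\psi)\in\{0,\pi\}\times\{0,\tfrac1t\}$. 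Because $\mathcal K^{\vartheta}$ is a local system and every nonempty piece of every intersection $A_k$, $A_k\cap A_l$, $A_1\cap A_2\cap A_3$ is contractible, for such a piece $U$ one has $\Gamma(U,\overline\beta^{\vartheta}_{\psi,!}\mathcal K^{\vartheta})=\bigoplus_C \mathcal K^{\vartheta}_{c_C}$, the sum running over those connected components $C$ of $U\cap\kappa^{-1}(B^{\vartheta}_{\psi})$ which are \emph{closed} in $U$ (equivalently, $\overline C$ meets neither $U\setminus\kappa^{-1}(B^{\vartheta}_{\psi})$ nor the open boundary arc of $B^{\vartheta}_{\psi}$ lying in the boundary circle), with $c_C\in C$ a chosen base point.

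The combinatorial input needed here is read off from the pictures above: Lemma \ref{lem1} tells us which of the two holes $\widetilde P_1,P_1$ lie in $B^{\vartheta}_{\psi}$, and Lemmas \ref{I_0} and \ref{I_1} tell us that $B^{\vartheta}_{\psi}$ — hence each of its intersections with the members of $\mathfrak A$ and with the $\alpha_m$ — is a disjoint union of contractible sets, with exactly the holes in $B^{\vartheta}_{\psi}$ accounting for its topology. Feeding this in, the \v{C}ech differentials $d^0,d^1$ become explicit signed restriction--comparison maps between finite direct sums of stalks of $\mathcal K^{\vartheta}$, and the computation of $\ker d^1/\operatorname{im}d^0$ reduces to finite linear algebra.

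The second step is that linear algebra, carried out case by case. When $\psi$ is chosen so that $B^{\vartheta}_{\psi}$ encircles only one hole by a component interior to $\overline A\times\{\vartheta\}$ — namely $(\vartheta,\psi)=(0,0)$, contributing only $\widetilde P_1$, and $(\vartheta,\psi)=(\pi,\tfrac1t)$, contributing only $P_1$, consistently with $\dim H^1=r$ predicted by Corollary \ref{cor1} and Theorem \ref{surj} — the class of a small loop around that hole generates $\check H^1$, giving $\mathcal K^0_{x_1}$, resp. $\mathcal K^{\pi}_{x_4}$, where $x_1$ (resp. $x_4$) is a base point in that interior component. When $B^{\vartheta}_{\psi}$ is the larger region, so that both holes contribute — namely $(\vartheta,\psi)=(0,\tfrac1t)$, where $B^0_{1/t}$ is the whole relevant region and hence $\overline\beta^0_{\frac1t,!}=\overline\beta^0_!$, and $(\vartheta,\psi)=(\pi,0)$ — the two loops give independent generators and one obtains $\mathcal K^0_{x_1}\oplus\mathcal K^0_{x_3}$, resp. $\mathcal K^{\pi}_{x_2}\oplus\mathcal K^{\pi}_{x_4}$.

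The main obstacle is the bookkeeping hidden in the extension by zero $\overline\beta^{\vartheta}_{\psi,!}$: one must argue carefully, from the explicit shape of $B^{\vartheta}_{\psi}$ in the pictures — in particular using the fact noted in Section 2.3.3 that an open arc of the boundary circle $\mathbb S^1_\infty$ lies in $B^{\vartheta}_{\psi}$ — precisely which connected components of $A_k\cap\kappa^{-1}(B^{\vartheta}_{\psi})$ (and of the pairwise and triple intersections) are closed in the ambient piece, since it is exactly these that contribute sections and hence determine $\check C^0,\check C^1$ and the image of $d^0$. Once this is pinned down correctly, the identification of $\ker d^1/\operatorname{im}d^0$ with the stated stalk(s) of $\mathcal K^{\vartheta}$ is routine; alternatively, in the one-hole cases one may shortcut the argument using the local exact sequence of Section 2.3.4 around the contributing singular point, as in the proof of the Leray property in the previous lemma.
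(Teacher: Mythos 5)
Your proposal is correct and follows essentially the same route as the paper: compute $\check H^1$ of the Leray covering $\mathfrak A$ by tracking which connected components of $\kappa^{-1}\left(B^{\vartheta}_{\psi}\right)$ intersected with the pieces of the covering are closed there, so that the extension by zero contributes a stalk of $\mathcal K^{\vartheta}$ for each such component. The paper's own proof is just the observation that this bookkeeping yields $\check C^0=0$ (no component of $A_k\cap\kappa^{-1}\left(B^{\vartheta}_{\psi}\right)$ is closed in $A_k$, each reaching the locus where the sheaf vanishes), so that $\check H^1=\check C^1$ is read off immediately as the displayed sum of stalks at the $x_i\in\alpha_i$ — the case-by-case computation of $\ker d^1/\operatorname{im}d^0$ in your second step collapses to nothing.
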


\begin{proof}
Just notice that $\check C^0 =0$ and thus $\check H^1= \check C^1$.
\end{proof}


\subsection {Stokes data associated to $\mathcal L$ }

Using the functor constructed in \cite{HS} we can associate a set of Stokes data to the Stokes-filtered local system $\mathcal L$ described above. 

\begin{construction} \label{SaHe} Fix two intervals 
$$I_0=  \left( 0-\epsilon, \pi + \epsilon\right), \: \: I_1= \left( -\pi -\epsilon, 0 + \epsilon\right)$$
of length $\pi + 2 \epsilon$ on $\mathbb  S^1$, such that the intersection $I_0 \cap I_1$ consists of $ \left( 0- \epsilon, 0 + \epsilon\right)$ and $ \left( \pi -\epsilon, \pi + \epsilon\right)$. Observe, that the intersections do not contain the Stokes directions $- \frac {\pi} 2, \frac {\pi} 2$. To our given local system $\mathcal L$ we associate: 
	\begin{itemize}
		\item a vector space associated to the angle $0$, i.\:e. the stalk $\mathcal L_0$. It comes equipped with the Stokes filtration.
		\item a vector space associated to the angle  $\pi$, i.\:e. the stalk $\mathcal L_{\pi}$, coming equipped with the Stokes filtration.
		\item vector spaces associated to the intervals $I_0$ and $I_1$, i.\:e. the global sections $\Gamma \left( I_0, \mathcal L\right)$, $ \Gamma \left( I_1, \mathcal L\right)$
		\item a diagram of isomorphisms (given by the natural restriction to the stalks): 
		\[
		\begin{xy}
  			\xymatrix
  			{
  			&  \Gamma \left( I_0, \mathcal L\right)  \ar[dl]_{a_0} \ar[rd]^{a'_0}& \\
  			\mathcal L_0 & & \mathcal L_{\pi}\\
			&  \Gamma \left( I_1, \mathcal L\right)  \ar[ul]^{a_1} \ar[ru]_{a'_1}&
 	 		}
		\end{xy}
		\]
	\end{itemize}
The filtrations on the stalks are opposite with respect to the maps $a'_0 a_0^{-1}$ and $a_1 {a'}_1^{-1}$, i.\:e.
 \[ \mathcal L_0 = \bigoplus_{\phi \in \left\{0 , \frac 1 t \right\}} L_{\leq \phi, 0} \cap a'_0 a_0^{-1} \left(L_{\leq \phi, \pi}\right), \hspace*{0.5cm}
 \mathcal L_{\pi} = \bigoplus_{\phi \in \left\{0 , \frac 1 t \right\}} L_{\leq \phi, \pi} \cap a_1 {a'}_1^{-1} \left(L_{\leq \phi, 0}\right)  \]
Furthermore we know, by using the isomorphisms $\Omega$ and $\Gamma$, that $\mathcal L_0 \cong \mathcal K_{x_1} \oplus \mathcal K_{x_3}$ whereby $\mathcal L_{\leq 0, 0} \cong \mathcal K_{x_1}$. Thus we have a splitting $\mathcal L_0 = G_0 \oplus G_{\frac 1 t}$ with $G_0 = \mathcal L_{\leq 0, 0}$. The same holds for $\mathcal L_{\pi}$: $\mathcal L_{\pi} = H_0 \oplus H_{\frac 1 t}$ with $H_{\frac 1 t}= \mathcal L_{\leq \frac 1 t , \pi}$. \\

Thus the set of data $ \left( G_0, G_{\frac 1 t}, H_0, H_{\frac 1 t}, S_0^{\pi}, S_{\pi}^0\right)$ with: 
	\begin{itemize}
		\item $ \mathcal L_0 = G_0 \oplus G_{\frac 1 t}$ and $\mathcal L_\pi = H_{\frac 1 t} \oplus H_0$
		\item $S_0^{\pi}:  G_0 \oplus G_{\frac 1 t}  \xrightarrow{ a_0^{-1}}  \Gamma \left( I_0, \mathcal L \right)\xrightarrow{ a'_0}  H_0 \oplus H_{\frac 1 t}$
		\item $S_{\pi}^0: H_{\frac 1 t} \oplus H_{0}  \xrightarrow{ a_1^{-1}}  \Gamma \left( I_1, \mathcal L \right)\xrightarrow{ a'_1}  G_{\frac 1 t} \oplus G_{0}$
	\end{itemize} 
describes a set of Stokes data associated to the local system $\mathcal L$.\\
By exhaustivity of the filtrations on $\mathcal L_{\vartheta}$ ($\vartheta = 0, \pi$) the isomorphism $\Omega$ induces  
$$\mathcal L_{\vartheta} \cong H^1 \left( \overline A \times \vartheta, \overline{\beta}^{\vartheta}_! \mathcal K^{\vartheta}\right)$$
 where $\overline{\beta}^{\vartheta}: B^{\vartheta} \hookrightarrow \overline A \times {\vartheta}$ corresponds to the $\overline{\beta}^{\vartheta}_{\psi}$ with $\psi \geq_{\vartheta} \phi$ for  $ \psi, \phi \in \Phi=\{ 0,\frac 1 t \}$ and $B^{\vartheta}:= B_{\psi}^{\vartheta} \supset B_{\phi}^{\vartheta}$.\\
In the same way, for an open interval $I \subset \mathbb S^1$ let $\mathcal K^I$ be the restriction of $\mathcal K$ to $\overline A \times I$ and define $\overline{\beta}^{I}:B^I \hookrightarrow \overline A \times I$ the inclusion of the subspace $B^I$, which is the support of $\operatorname{DR}^{mod \:  D} (\mathcal M \otimes \mathcal E^{\frac 1 y})$ according to Remark \ref{remarknew}. Notice  that ${\overline{\beta}^{I}} _{| \overline A \times {\vartheta}}= \overline{\beta}^{\vartheta}$. Then, by the isomorphism $\Omega$, we identify 
$$\Gamma \left( I, \mathcal L\right) \cong H^1 \left(  \overline A \times I, \overline{\beta}^I_! \mathcal K^I\right).$$
With these isomorphisms we have restriction morphisms (according to the restrictions to the stalks)
\[ \rho_{\vartheta}: H^1 \left(  \overline A \times I_0, \overline{\beta}^{I_0}_! \mathcal K^{I_0} \right)\xrightarrow{\cong} H^1 \left( \overline A \times \vartheta, \overline{\beta}^{\vartheta}_! \mathcal K^{\vartheta}\right) \text{ for } \vartheta \in I_0\]
\[ \rho'_{\vartheta}: H^1 \left(  \overline A \times I_1, \overline{\beta}^{I_1}_! \mathcal K^{I_1} \right)\xrightarrow{\cong} H^1 \left( \overline A \times \vartheta, \overline{\beta}^{\vartheta}_! \mathcal K^{\vartheta}\right) \text{ for } \vartheta \in I_1\]
\end{construction}

This yields to a new way of describing Stokes data associated to the local system $\mathcal L$:
\begin{theorem} \label{Thm1} Fix the following data:
	\begin{itemize}
		\item vector spaces $ L_0:= H^1 \left( \overline A \times \{0\}, \overline{\beta}^0_! \mathcal K^0\right)$ and $L_1:= H^1 \left( \overline A \times \{\pi\}, \overline{\beta}^{\pi}_! 			\mathcal K^{\pi}\right)$
		\item morphisms $\sigma_0^{\pi}:= \rho_{\pi} \circ \rho_0^{-1}$ and $\sigma_{\pi}^0:=  \rho'_{0} \circ {\rho'}_{\pi}^{-1}$ , with $\rho_{\vartheta}$ and $\rho'_{\vartheta}$ 				defined as above. 
	\end{itemize}
Then
$$ \left( L_0, L_1, \sigma_0^{\pi}, \sigma_{\pi}^0\right)$$ 
defines a set of Stokes data for $\mathcal H^0 p_+  \left( \mathcal M \otimes \mathcal E^{\frac 1 y}\right)$.
\end{theorem}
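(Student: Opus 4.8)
The strategy is to recognise $(L_0,L_1,\sigma_0^\pi,\sigma_\pi^0)$ as the set of Stokes data $(G_0,G_{\frac 1 t},H_0,H_{\frac 1 t},S_0^\pi,S_\pi^0)$ of Construction \ref{SaHe}, read off through the isomorphism $\Omega$ of Theorem \ref{theorem1} and the topological identifications of Lemma \ref{lem3} and Remark \ref{remarknew}. Once the two descriptions are matched, the fact that $(L_0,L_1,\sigma_0^\pi,\sigma_\pi^0)$ satisfies the axioms of Definition \ref{SD} is inherited from Construction \ref{SaHe}, where it was obtained from the functor of \cite{HS}.

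First I would spell out the object-level identifications. By Construction \ref{SaHe} the isomorphism $\Omega$ induces $L_0=H^1(\overline A\times\{0\},\overline\beta^0_!\mathcal K^0)\cong\mathcal L_0$ and $L_1=H^1(\overline A\times\{\pi\},\overline\beta^\pi_!\mathcal K^\pi)\cong\mathcal L_\pi$, together with $H^1(\overline A\times I_k,\overline\beta^{I_k}_!\mathcal K^{I_k})\cong\Gamma(I_k,\mathcal L)$ for $k=0,1$. Transporting the Stokes filtrations of $\mathcal L_0$ and $\mathcal L_\pi$ through these isomorphisms and computing the graded pieces with Lemma \ref{LC} yields $G_0=\mathcal L_{\leq 0,0}\cong\mathcal K^0_{x_1}$ with $\operatorname{gr}_{\frac 1 t}L_0\cong\mathcal K^0_{x_3}$, and $H_{\frac 1 t}=\mathcal L_{\leq\frac 1 t,\pi}\cong\mathcal K^\pi_{x_4}$ with $\operatorname{gr}_0 L_1\cong\mathcal K^\pi_{x_2}$; the opposedness of the filtrations recorded in Construction \ref{SaHe} then gives the splittings $L_0=G_0\oplus G_{\frac 1 t}$ and $L_1=H_0\oplus H_{\frac 1 t}$. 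Here one must check that $\overline\beta^0$ (resp. $\overline\beta^\pi$) is the inclusion of the larger of the supports $B^0_0,B^0_{\frac 1 t}$ (resp. $B^\pi_0,B^\pi_{\frac 1 t}$), which is where the nestings $B^0_0\subset B^0_{\frac 1 t}$ and $B^\pi_{\frac 1 t}\subset B^\pi_0$ noted after Lemma \ref{BU} — and the ordering $0<_0\frac 1 t$ of Corollary \ref{cor1} — enter.

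Second I would match the morphisms. By construction $\sigma_0^\pi=\rho_\pi\circ\rho_0^{-1}$ and $\sigma_\pi^0=\rho'_0\circ(\rho'_\pi)^{-1}$, where $\rho_\vartheta,\rho'_\vartheta$ are the cohomological restriction isomorphisms to the fibre over $\vartheta$. The key claim is that under $\Omega$ and the identifications above these restriction maps correspond to the restriction-to-the-stalk maps $a_0,a'_0,a_1,a'_1$ of Construction \ref{SaHe}: for $\vartheta\in I_k$ the square relating $H^1(\overline A\times I_k,-)\to H^1(\overline A\times\vartheta,-)$ to $\Gamma(I_k,\mathcal L)\to\mathcal L_\vartheta$ should commute, because all the maps involved are natural in the fibre and $\overline\beta^\vartheta={\overline\beta^{I}}_{|\overline A\times\vartheta}$. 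Granting this, $\sigma_0^\pi$ is identified with $a'_0\circ a_0^{-1}=S_0^\pi$ and $\sigma_\pi^0$ with $a'_1\circ a_1^{-1}=S_\pi^0$, compatibly with the gradings above; hence $\sigma_0^\pi$ is block upper triangular with invertible diagonal blocks and $\sigma_\pi^0$ is block lower triangular with invertible diagonal blocks, so $(L_0,L_1,\sigma_0^\pi,\sigma_\pi^0)$ is a set of Stokes data for $\mathcal H^0 p_+(\mathcal M\otimes\mathcal E^{\frac 1 y})$.

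The main obstacle I expect is exactly this naturality check. One has to verify that the chain $\mathcal L_{\leq\psi,\vartheta}\xrightarrow{\Omega}\widetilde{\mathcal L}_{\leq\psi,\vartheta}\xrightarrow{\ \text{Lemma \ref{lem3}}\ }\mathbb H^1((\widetilde{p\circ e})^{-1}(\vartheta),\dots)\to\mathbb H^1_c(B^\vartheta_\psi,\mathcal K^\vartheta)$ and its counterpart over an interval $I$ intertwine the various restriction morphisms. This should follow from the properness of $\widetilde{p\circ e}$ (which makes Lemma \ref{lem3} an instance of proper base change, functorial in the base), from the fact that $\overline A\times\vartheta\hookrightarrow\overline A\times I$ is a closed inclusion of fibres, and from the compatibility of the birational-map isomorphisms of Proposition \ref{prop1} with restriction; assembling these into a single commuting diagram is routine but technically the most delicate part of the argument.
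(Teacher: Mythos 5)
Your proposal is correct and follows essentially the same route as the paper: transport the filtrations and the morphisms of Construction \ref{SaHe} through $\Omega$ (identifying $\sigma_0^{\pi}=\Omega\circ S_0^{\pi}\circ\Omega^{-1}$ and $\sigma_{\pi}^0=\Omega\circ S_{\pi}^0\circ\Omega^{-1}$) and inherit the block-triangularity from there. The only difference is one of emphasis --- the paper treats the compatibility of $\rho_{\vartheta},\rho'_{\vartheta}$ with the stalk restrictions $a_0,a'_0,a_1,a'_1$ as immediate from the definitions, whereas you correctly flag it as the naturality check to be verified.
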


\begin{proof}
The isomorphism $\Omega: \mathcal L_{\vartheta} \to H^1 \left( \overline A \times \{\vartheta\}, \overline{\beta}^{\vartheta}_! \mathcal K^{\vartheta}\right)$ passes the filtrations on $\mathcal L_0$ and $\mathcal L_{\pi}$ to $L_0$ and $L_1$ and therefore yields to a suitable graduation of $L_0 = \widetilde G_0 \oplus \widetilde G_{\frac 1 t} \cong \mathcal K^0_{x_1} \oplus \mathcal K^0_{x_3}$ and $L_1 = \widetilde H_{\frac 1 t} \oplus \widetilde H_{0} \cong \mathcal K^{\pi}_{x_4} \oplus \mathcal K^{\pi}_{x_2}$. \\
On the level of graduated spaces $\Omega: G_0 \oplus G_{\frac 1 t} \to \widetilde G_0 \oplus \widetilde G_{\frac 1 t}$ (resp. $H_{\frac 1 t} \oplus H_0 \to \widetilde H_{\frac 1 t}  \oplus  \widetilde H_0$)  is obviously described by a block diagonal matrix. 
Furthermore,  by definition of $\rho$ and $\rho'$, the maps $\sigma_0^{\pi}$ and $\sigma_{\pi}^0$ can be read as  $\sigma_0^{\pi} = \Omega \circ S_0^{\pi} \circ \Omega^{-1}$ and $\sigma_{\pi}^0= \Omega \circ S_{\pi}^0 \circ \Omega^{-1}$. Since, according to the construction in \ref{SaHe},  $S_0^{\pi}$ is upper block triangular (resp. $S_{\pi}^0$ is lower block triangular) the same holds for $\sigma_0^{\pi}$ (resp. $\sigma_{\pi}^0$).  
\end{proof} 


\subsection{Explicit computation of the Stokes matrices}

The determination of the Stokes data thus corresponds to the following picture: \\

	\[
	\begin{xy}
	\xymatrixrowsep{0.1cm}
	\xymatrix
  		{
  		 \includegraphics[scale=0.13]{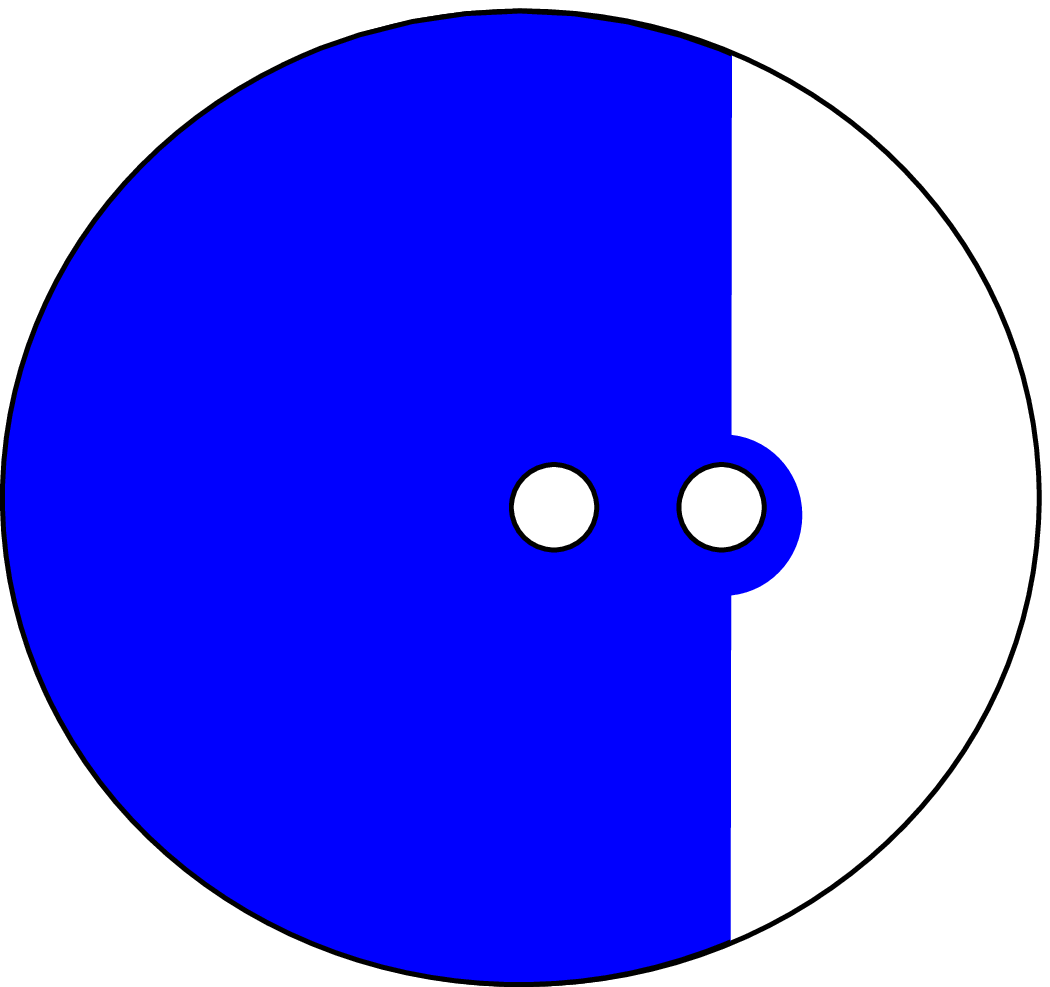}  & \includegraphics[scale=0.3]{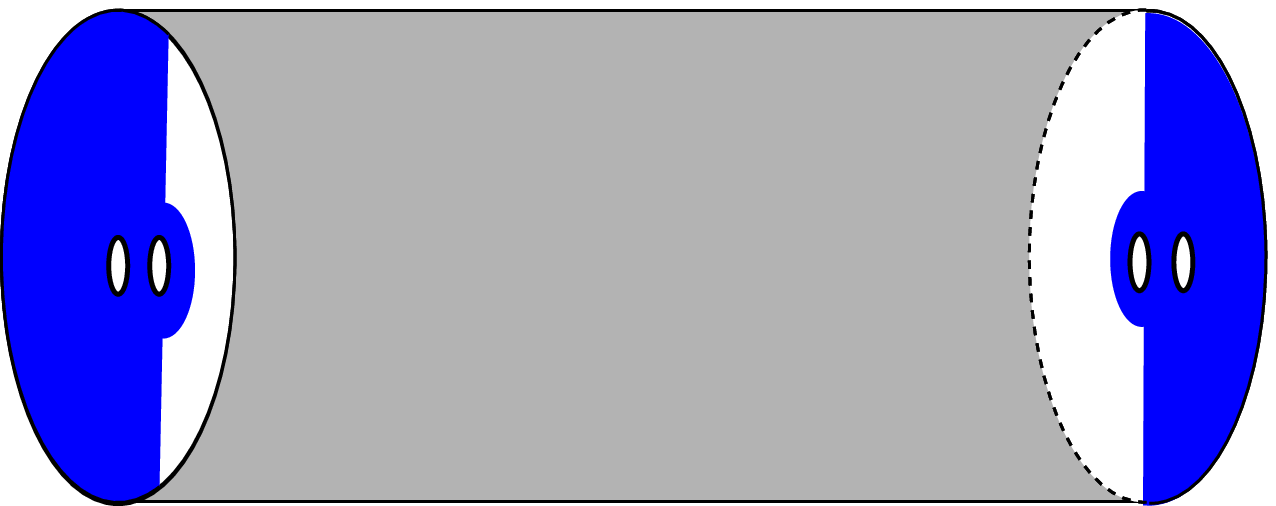} & \includegraphics[scale=0.15]{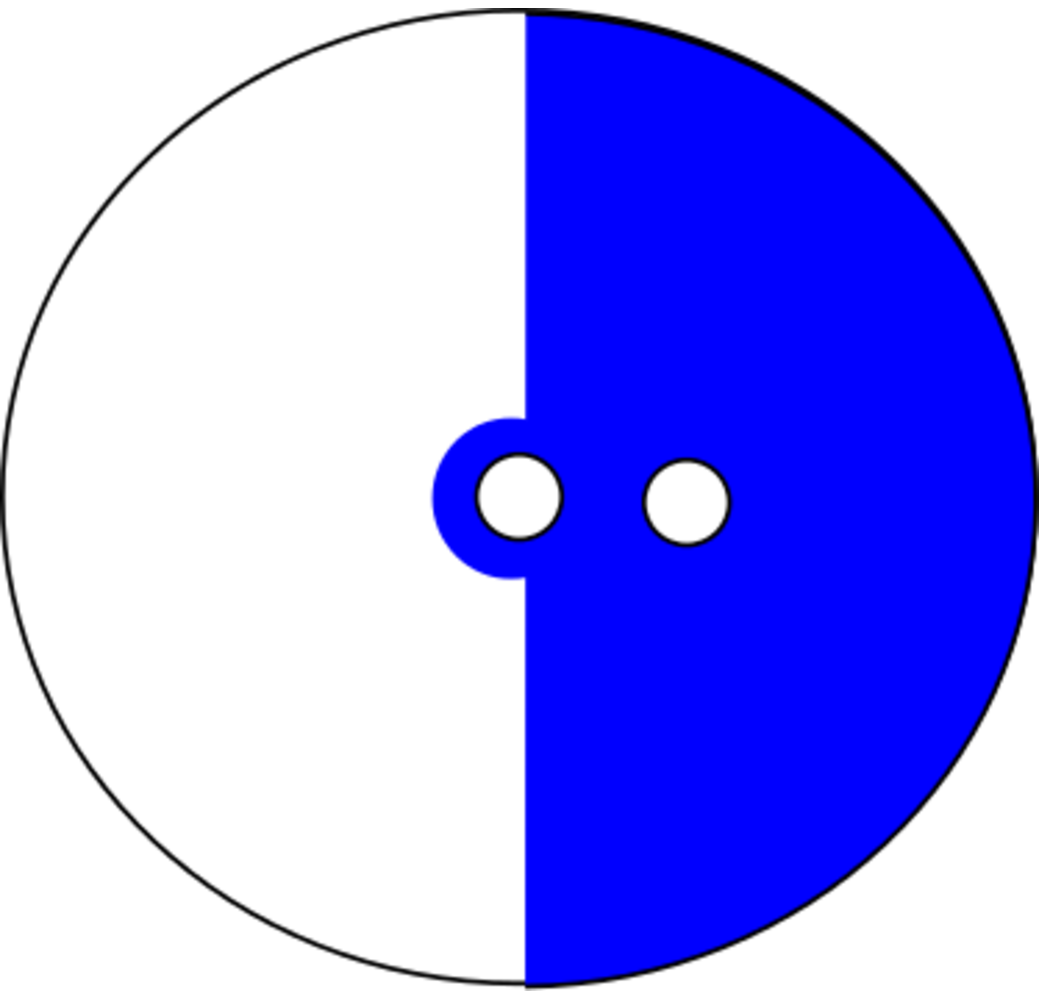} \\
								 &H^1 \left( \overline A \times I_0, \overline{\beta}^{I_0}_! \mathcal K \right)\ar[ldd]_{\cong} \ar[rdd]^{\cong}&\\
								&& \\
		 H^1 \left( \overline A \times \{ 0\}, \overline{\beta}^0_! \mathcal K^0 \right) \ar@/ ^ 0.5 cm /[rr] & &H^1 \left( \overline A \times \{ \pi\}, \overline{\beta}^{\pi}_! \mathcal K^{\pi}		\right)\ar@/^ 0.5cm/[ll]\\
		 &&\\
		 &H^1 \left( \overline A \times I_1, \overline{\beta}^{I_1}_! \mathcal K \right)\ar[luu]^{\cong} \ar[ruu]_{\cong}&
 	 	}
	\end{xy}
	\]
\vspace*{0.5cm}\\
In Lemma \ref{LC} we have already computed the cohomology groups for $\vartheta=0$ and $\vartheta= \pi$ using the Leray covering $\mathfrak A$.
Now for $l=0,1$ we fix a diffeomorphism $\overline A \times I_l \xrightarrow {\sim} \overline A \times I_l$ by lifting the vector field $\partial_{\vartheta}$ to $\overline A \times \mathbb S^1$ such that the lift is equal to $\partial_{\vartheta}$ away from a small neighborhood of $\partial \overline A$ and such that the diffeomorphism induces $B^{I_l} \xrightarrow{\cong} B^{\vartheta_{l+1}} \times I_l$ where $\vartheta_1= \pi$ and $ \vartheta_2= 0$. It induces a diffeomorphism $\overline A \times \{ \vartheta_l\} \xrightarrow{\cong} \overline A \times \{ \vartheta_{l+1}\}$ and an isomorphism between the push forward of $\mathcal K^{\vartheta_l}$ and $\mathcal K^{\vartheta_{l+1}}$. Moreover it  sends the boundary $\partial B^{\vartheta_l}$ to $\partial B^{\vartheta_{l+1}}$ (i.\:e. the boundaries are rotated in counter clockwise direction by the angle $\pi$). Via this diffeomorphism the curves $\alpha_i \subset \overline A \times {\vartheta_l}$ are sent to curves $\widetilde{\alpha}_i \subset \overline A \times {\vartheta_{l+1}}$ and therefore induce another Leray covering $\widetilde{\mathfrak A}$ of $H^1 \left( \overline A \times \{\vartheta_{l+1}\}, \overline{\beta}^{\vartheta_{l+1}}_! \mathcal K^{\vartheta_{l+1}}\right)$.
	\begin{center}
		\includegraphics[scale=0.4]{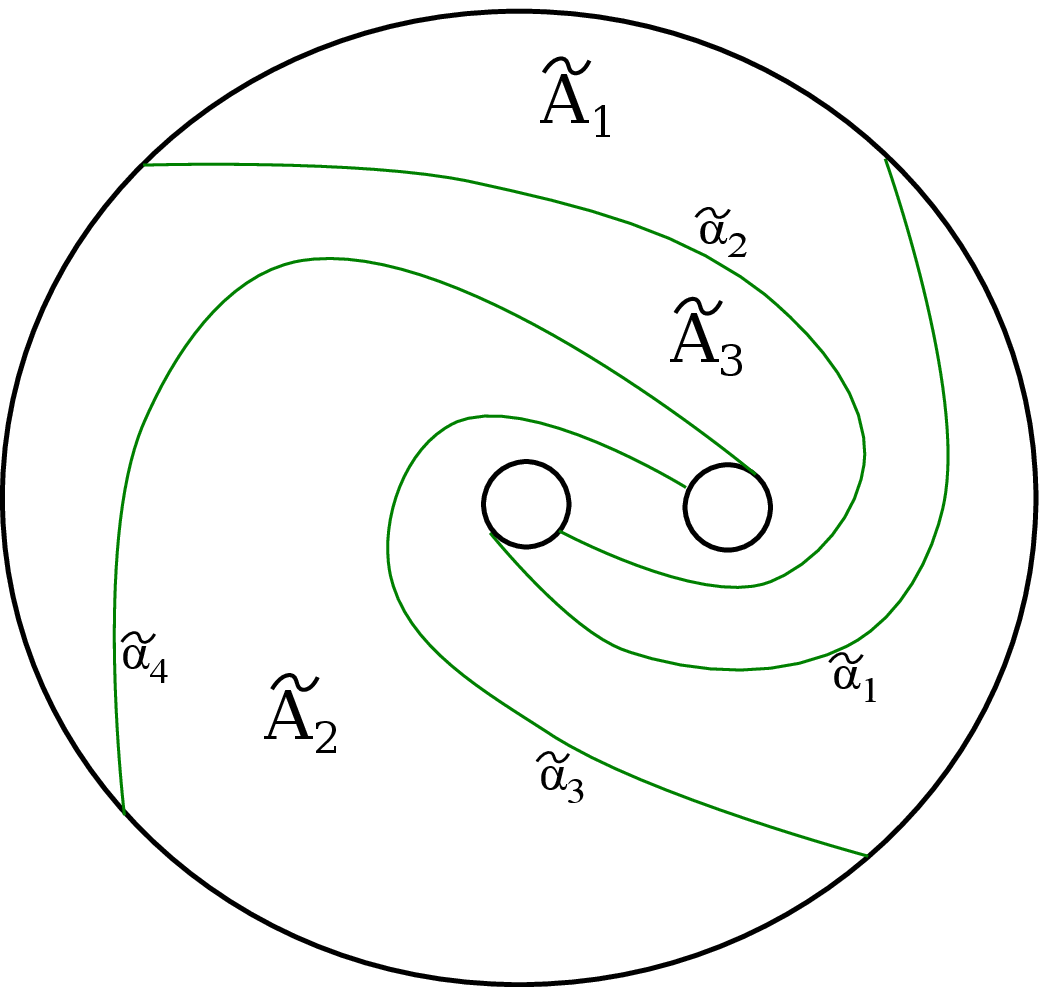} 
	\end{center}
Explicitly, for $l=0$ we get a Leray covering $\widetilde{\mathfrak A}$ of $H^1 \left( \overline A \times \{\pi\}, \overline{\beta}^{\pi}_! \mathcal K^{\pi}\right)$ and, as in the previous chapter, we get an isomorphism 
$\widetilde {\Gamma}_{\pi}: H^1 \left( \overline A \times \{\pi\}, \overline{\beta}^{\pi}_ ! \mathcal K^{\pi}\right) \to \check H^1 \left( \widetilde{\mathfrak A}, \overline{\beta}^{\pi}_ ! \mathcal K^{\pi}\right)$, which gives us: 
\[H^1 \left( \overline A \times \{\pi\}, \overline{\beta}^{\pi}_ ! \mathcal K^{\pi}\right) \cong \mathcal K^{\pi} \left(  \widetilde \alpha_1\right) \oplus \mathcal K^{\pi} \left(  \widetilde \alpha_3\right) \cong \mathcal K^{\pi}_{x_1} \oplus \mathcal K^{\pi}_{x_3} \]
Thus the above diffeomorphism leads to an isomorphism
\[\mu_0^{\pi}: \mathcal K^0_{x_1} \oplus  \mathcal K^0_{x_3} \xrightarrow{\cong} \mathcal K^{\pi}_{x_1} \oplus  \mathcal K^{\pi}_{x_3} \]
Furthermore let us fix the following vector space $\mathbb V := \mathcal K^{\pi}_c$ to be the stalk of the local system $\mathcal K$ at the point $c:=  \left( 0, \pi, \frac 1 2 , 0\right)$ (which is obviously a point in the fiber $\overline A \times \{ \pi\}$). By analytic continuation we can identify  every non-zero stalk $\mathcal K^{\vartheta}_x$ with $\mathbb V$ for all $\vartheta$.\\
Now consider the following diagram of isomorphisms: 

	\[
	\begin{xy}
		\xymatrixrowsep{1.5cm}
		\xymatrixcolsep{0.5cm}
		\xymatrix
  		{
  		 \mathcal L_0 \ar[d] _{\cong} \ar[rrr]^{S_0^{\pi}}  &  & & \mathcal L_{\pi} \ar[d]^{\cong}\\
		 H^1 \left( \overline A \times \{ 0\}, \overline{\beta}^0_! \mathcal K^0 \right) \ar[d] _{\Gamma_0} \ar[rrr] &  & &H^1 \left( \overline A \times \{ \pi\}, \overline{\beta}^{\pi}_! \mathcal 		K^{\pi}\right) \ar[ld]_{\widetilde{\Gamma}_{\pi}}  \ar[d] ^{\Gamma_{\pi}} \\
		 \mathcal K^0_{x_1} \oplus  \mathcal K^0_{x_3} \ar[rr]^{\mu_0^{\pi}} \ar[d] &&\mathcal K^{\pi}_{x_1} \oplus  \mathcal K^{\pi}_{x_3} \ar[r]^{\nu_{\pi}} \ar[d] &\mathcal K^{\pi}		_{x_2} \oplus  \mathcal 	K^{\pi}_{x_4} \ar[d] \\
		 \mathbb V \oplus \mathbb V \ar[rr]^{Id} && \mathbb V \oplus \mathbb V \ar[r]^{N_{\pi}} & \mathbb V \oplus \mathbb V
 	 	}
	\end{xy}
	\]
\vspace*{0.3cm}
\enlargethispage{1cm}
It remains to determine the map $\nu_{\pi}$  (respectively $N_{\pi}$). Therefore we will combine the coverings $\mathfrak A$  and $\widetilde {\mathfrak A}$ of $\overline A \times \{\pi\}$ to a refined covering $\mathfrak B$. We get refinement maps ${\operatorname{ref}}_{\mathfrak A \to \mathfrak B}$ and ${\operatorname{ref}}_{\widetilde{\mathfrak A} \to \mathfrak B}$ and receive the following picture:

	\[
	\begin{xy}
		\xymatrixrowsep{0.5cm}
		\xymatrixcolsep{-0.5cm}
		\xymatrix
  		{
  		 \includegraphics[scale=0.3]{Axvartheta.eps} \ar[rd]_{{\operatorname{ref}}_{\mathfrak A \to \mathfrak B}}   &&&&    \includegraphics[scale=0.3]{Axpitilde.eps}  \ar[ld] 			_{{\operatorname{ref}}_{\widetilde{\mathfrak A} \to \mathfrak B}}  \\
		 &&&&\\
		 &&&&\\
		 &	& \includegraphics[scale=0.4]{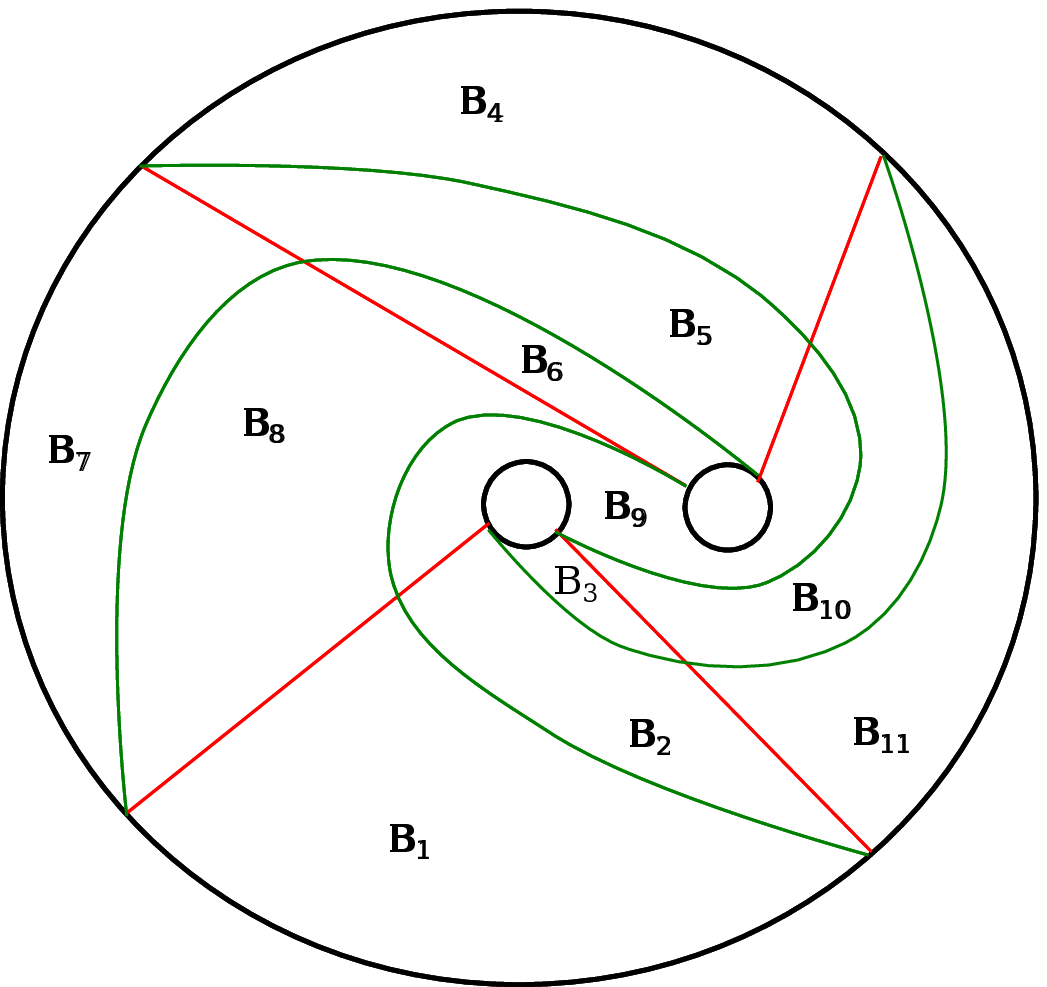} & &
		}
	\end{xy}
	\]

Since $\mathfrak B$ is again a Leray covering the refinement maps induce isomorphisms on the cohomology groups. 
Thus we will extend' the above diagram of isomorphisms in the following way: 
	\[
	\begin{xy}
		\xymatrixrowsep{1.3cm}
		\xymatrixcolsep{0.6cm}
		\xymatrix
  		{
  		&\check H^1 \left( \mathfrak A, \overline{\beta}^{\pi}_ ! \mathcal K^{\pi}\right) \ar[r]^{\nu_{\pi}'}&\check H^1 \left( \mathfrak A, \overline{\beta}^{\pi}_ ! \mathcal K^{\pi}\right)&\\
		 H^1 \left( \overline A \times \{ \pi\}, \overline{\beta}^0_! \mathcal K^0 \right)\ar[r]^{\cong} \ar[rd] _{\widetilde{\Gamma}_{\pi}} & \check H^1 \left( \widetilde{\mathfrak A}, 			\overline{\beta}^{\pi}_ ! \mathcal K^{\pi}\right) \ar[d]_{\cong} \ar[u]^{{\operatorname{ref}}_{\widetilde{\mathfrak A} \to \mathfrak B}}  & \check H^1 \left( \mathfrak A, 				\overline{\beta}^{\pi}_ ! \mathcal K^{\pi}\right) \ar[d]^{\cong} \ar[u]_{{\operatorname{ref}}_{\mathfrak A \to \mathfrak B}} &H^1 \left( \overline A \times \{ \pi\}, \overline{\beta}		^{\pi}_! \mathcal K^{\pi}\right) \ar[l]_{\cong} \ar[ld]^{\Gamma_{\pi}} \\
		& \mathcal K^{\pi}_{x_1} \oplus  \mathcal K^{\pi}_{x_3} \ar[r]^{\nu_{\pi}} \ar[d]_{\cong}  &\mathcal K^{\pi}_{x_2} \oplus  \mathcal K^{\pi}_{x_4} \ar[d]^{\cong}& \\
		& \mathbb V \oplus \mathbb V \ar[r]^{N_{\pi}} & \mathbb V \oplus \mathbb V&
 	 	}
	\end{xy}
	\]	

To determine the refinement maps on the first cohomology groups we have to consider the \u{C}ech complex for $\mathfrak B$.\\
\\
We set the following index sets
	\begin{itemize}
		\item $ I :=\{1, 2, 3\}$ (indices corresponding to the covering $\mathfrak A= \bigcup_{i \in I} A_i$)
		\item $\widetilde I := \{ 1,2,3\}$ (corresponding to $\widetilde{\mathfrak A}= \bigcup_{i \in \widetilde I} \widetilde A_i$)
		\item $J:=\{1,2, 3, \ldots , 11\}$ (corresponding to ${\mathfrak B}= \bigcup_{j \in J} B_j$)
		\item $K:= I \times J$
		\item $J':=\{ 2,3,9,10,11\} \subset J$
		\item $K'= \{ \left( 1,2\right),  \left( 1,9\right),  \left( 1,11\right),  \left( 2,3\right),  \left( 2,8\right),  \left( 2,9\right),  \left( 2,10\right),  \left( 2,11\right),  \left( 3,9\right), \left( 3,10\right),  			\left( 3,11\right),\\ 
			\hspace*{1.0cm}   \left( 4,9\right),  \left( 4,10\right),  \left( 4,11\right),  \left( 5,9\right),  \left( 5,10\right),  \left( 6,9\right),  \left( 8,9\right),  \left( 9,10\right),  \left( 10,11\right) \} 			\subset K$
		\end{itemize}
\vspace*{0.5cm}
The \u{C}ech complex for $\mathfrak B$ is given by: 
	\[
	\begin{xy}
		\xymatrixrowsep{0.2cm}
		\xymatrixcolsep{1.0cm}

		\xymatrix
		{
		\check C^0 \ar@{=}[d]  \ar[r]^{d_0}  & \check C^1\ar@{=}[d]\ar[r]^{d_1} & \check C^2   \ar[r]^{d_2} & \ldots&\\
		 \bigoplus\limits_{j \in J'  }\check H^0  \left( B_j, \overline{\beta}^{\pi}_! \mathcal K^{\pi}\right) &\bigoplus\limits_{ \left( i,j\right) \in K' }\check H^0  \left( B_i \cup B_j, 				\overline{\beta}^{\pi}_! \mathcal K^{\pi}\right)& &&
		}
	\end{xy}
	\]

\vspace*{0.5cm}
To determine the map $d_0$ we will use the following identification of $\check C^1$: \\
As above, by analytic continuation we can identify each component $\check H^0 \left( B_i \cup B_j, \overline{\beta}^{\pi}_! \mathcal K^{\pi}\right)$ with $\mathbb V^{k \left( i,j\right)}:=\bigoplus _{l=1}^{k \left( i,j\right)}\mathbb V$ where $k \left( i,j\right)$ denotes the number of connected components of $ \left( B_i \cap B_j\right)$. $k \left( i,j\right)= 1$ for all $ \left( i,j\right)\in J$ except for $ \left( 3,9\right)$ and $ \left( 6,9\right)$ where it is equal to $2$. So we have an isomorphism 
\[  \check C^1 \cong \bigoplus_{ \left( i,j\right) \in K'} \mathbb V^{k \left( i,j\right)}\]
\vspace*{0.5cm}

Now if we take a section $b_9$ of $B_9$, restrict it to a boundary component of $B_9$ (which is the intersection with one of the bordering $B_i$s) and identify it with $\mathbb V$, we have to take care about the monodromies $S, T$ around the two leaks in $\overline A \times \{ \pi\}$. The following picture shows, by restriction to which boundary component we receive monodromy. 

	\begin{center}
		\includegraphics[scale=0.4]{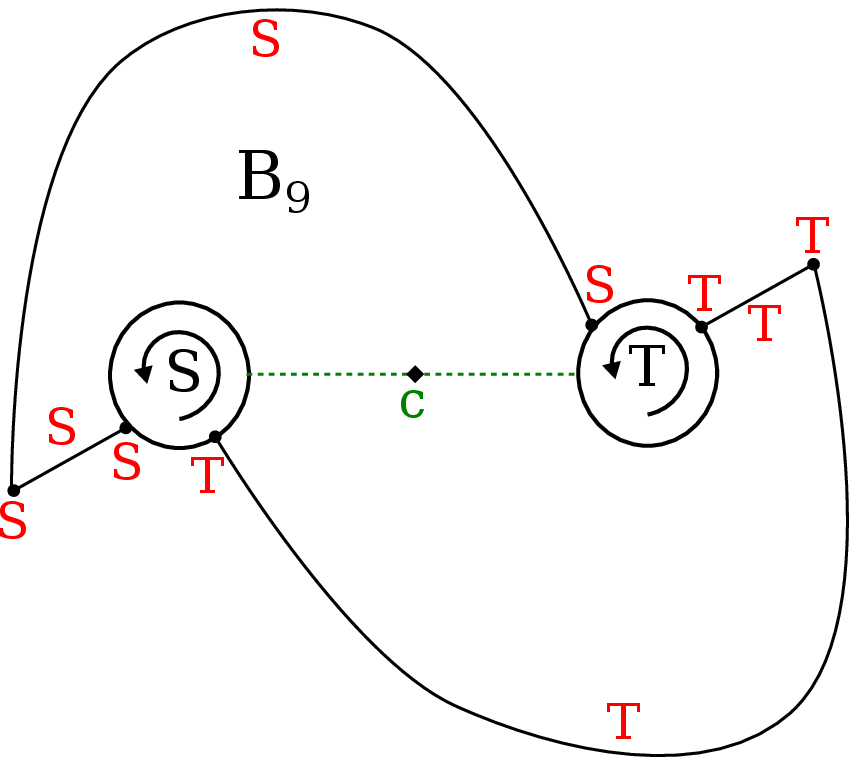} 
	\end{center}

\begin{remark} 
If we follow $S$ and $T$  in the coordinates of $\overline A \times \{ \vartheta \}$, we can also describe them in terms of monodromy around the divisor components: 
$S$ can be described by a path $\gamma_S: [0,1] \to \overline A \times \{ \vartheta\} , \tau \mapsto  \left( 0, \vartheta, 0,  \tau \cdot 2 \pi\right)$ in the coordinates $ \left( |t |, \vartheta, |x|, \theta_x\right)$. If we look at this path in the ( complex) blow up of the singular locus of $\mathcal M$, $\gamma_S$ corresponds to the monodromy around $\widetilde S_1$. In the same way $T$ is given by  $\gamma_T: [0,1] \to \overline A \times \{ \vartheta\} , \tau \mapsto  \left( 0, \vartheta, 0,  \tau \cdot 2 \pi\right)$ in the coordinates $ \left( |\tilde u'_1 |, \vartheta, |\tilde v'_1|, \theta_{\tilde v'_1}\right)$ and it corresponds to the monodromy around the strict transform of $S_1$. Note that $S$ and $T$ do not depend on $\vartheta$.
\end{remark}

Now one can write down easily the map $d_0$ of the \u{C}ech complex  and the refinement maps $\operatorname{ref}_{\mathfrak A \to \mathfrak B}$ respectively $\operatorname{ref}_{\widetilde{\mathfrak A} \to \mathfrak B}$ with respect to the above analytic continuation. 

Furthermore let $ \left( a_2, a_4\right)$ be a base of $\check H^1 \left( \mathfrak A, \overline{\beta}_!^{\pi} \mathcal K^{\pi}\right) \cong \mathcal K^{\pi}_{x_2} \oplus \mathcal K^{\pi}_{x_4} \cong \mathbb V \oplus \mathbb V$ and $ \left( \tilde a_1, \tilde a_3\right)$ a base of $\check H^1 \left( \widetilde{\mathfrak A}, \overline{\beta}_!^{\pi} \mathcal K^{\pi}\right) \cong \mathcal K^{\pi}_{x_1} \oplus \mathcal K^{\pi}_{x_3} \cong \mathbb V \oplus \mathbb V$. We receive two bases of $\check H^1 \left( \mathfrak B, \overline{\beta}_!^{\pi} \mathcal K^{\pi}\right)$, namely  $\operatorname{ref}_{\widetilde{\mathfrak A} \to \mathfrak B}  \left( \tilde a_1, \tilde a_3\right)$ and $\operatorname{ref}_{\mathfrak A \to \mathfrak B}  \left( a_2, a_4\right)$. Thus $\nu'$ is determined by representing the base  $\operatorname{ref}_{\widetilde{\mathfrak A} \to \mathfrak B}  \left( \tilde a_1, \tilde a_3\right)$ in terms of 
$\operatorname{ref}_{\mathfrak A \to \mathfrak B}  \left( a_2, a_4\right)$. \\
As before we identify $\check C^1 \left( \mathfrak B,  \overline{\beta}_!^{\pi} \mathcal K^{\pi}\right) \cong \bigoplus \mathbb V^{k \left( i,j\right)}$ and we end up in solving the following equation for each $ \left( i,j\right) \in K'$: 
\[ \bigg( {\operatorname{ref}_{\widetilde{\mathfrak A} \to \mathfrak B}  \left( \tilde a_1, \tilde a_3\right)}\bigg)_{ \left( i,j\right)} =  \bigg( {\operatorname{ref}_{\mathfrak A \to \mathfrak B}  \left( a_2, a_4\right)}\bigg)_{ \left( i,j\right)}  \operatorname{mod} im \left( d_0\right)\]
We get the following result: 
\[\tilde a_1= -a_2 +  \left( 1-ST^{-1}\right) a_4, \hspace*{0.2cm} \tilde a_3 = -ST^{-1} a_4\]
and consequently the map $N_{\pi}$ is given by the matrix
\[
	\begin{pmatrix}
		-1 & 1-ST^{-1}\\
		0 & -ST^{-1}
	\end{pmatrix}
\]
For calculating $S_{\pi}^0: \mathcal L_{\pi} \to \mathcal L_0$, we will use exactly the same procedure, except that we have to take care about the continuation to the vector space $\mathbb V$. \\
First we fix another vector space $\mathbb W:= \mathcal K^0_c$ where $c =  \left( 0, 0, \frac 1 2, 0\right) \in \overline A \times \{ 0\}$ and consider the following diagram:  

	\[
	\begin{xy}
		\xymatrixrowsep{1.0cm}
		\xymatrixcolsep{0.7cm}
		\xymatrix
  		{
  		 \mathcal L_{\pi} \ar[d] _{\cong} \ar[rrr]^{S^0_{\pi}}  &  & & \mathcal L_{0} \ar[d]^{\cong}\\
		 H^1 \left( \overline A \times \{ \pi\}, \overline{\beta}^{\pi}_! \mathcal K^{\pi} \right) \ar[d] _{\Gamma_{\pi}} \ar[rrr] &  & &H^1 \left( \overline A \times \{ 0\}, \overline{\beta}^{0}_! 		\mathcal K^{0}\right) 																								\ar[ld]_{\widetilde{\Gamma}		_{0}}  \ar[d] ^{\Gamma_{0}} \\
		 \mathcal K^{\pi}_{x_2} \oplus  \mathcal K^{\pi}_{x_4} \ar[rr]^{\mu_{\pi}^{0}} &&\mathcal K^{0}_{x_2} \oplus  \mathcal K^{0}_{x_4} \ar[r]^{\nu_{0}} \ar[d] &\mathcal K^{0}																									_{x_1} \oplus  	\mathcal 	K^{0}_{x_3} \ar[d] \\
		  && \mathbb W \oplus \mathbb W \ar[r]^{N_{0}} & \mathbb W \oplus \mathbb W
 	 	}
	\end{xy}
	\]
\vspace*{0.2cm}
	
As before, for the determination of the map $\nu_{0}$  (respectively $N_{0}$) we combine the coverings $\mathfrak A$ and $\widetilde {\mathfrak A}$ of $\overline A \times \{0\}$ to the refined covering $\mathfrak B$  and get the refinement maps ${\operatorname{ref}}_{\mathfrak A \to \mathfrak B}$ and ${\operatorname{ref}}_{\widetilde{\mathfrak A} \to \mathfrak B}$ which induce isomorphisms on the cohomology groups. \\
We get:
\[
N_0=
	\begin{pmatrix}
		-TS^{-1} & 0\\
		1-TS^{-1} & -1
	\end{pmatrix}
\]
Now we extend $\nu_0$ respectively $N_0$ to the vector space $\mathbb V \oplus \mathbb V$ by $\mu_{\pi}^{0}$ (which does not affect $N_0$):
	\[
	\begin{xy}

		\xymatrix
		{
		\mathcal K^{\pi}_{x_2} \oplus \mathcal K^{\pi}_{x_4} \ar[r]^{\mu_{\pi}^0} \ar[d] & \mathcal K^{0}_{x_2} \oplus \mathcal K^{0}_{x_4} \ar[r]^{\nu_0} \ar[d] & \mathcal K^{0}_{x_1} 		\oplus \mathcal K^{0}_{x_3} \ar[d] & \mathcal K^{\pi}_{x_1} \oplus \mathcal K^{\pi}_{x_3} \ar[l]_{\mu_{\pi}^0} \ar[d]\\
		\mathbb V \oplus \mathbb V \ar[r]^{Id} & \mathbb W \oplus \mathbb W \ar[r]^{N_0} & \mathbb W \oplus \mathbb W& \mathbb V \oplus \mathbb V  \ar[l]_{Id}
		}
	\end{xy}
	\]
\vspace*{0.2cm}

Let us fix two isomorphisms $\Sigma_0$ and $\Sigma_{\pi}$, which we will call the standard identification of  $H^1 \left( \overline A \times \{ 0\}, \overline{\beta}^0_! \mathcal K^0\right)$, respectively $ H^1 \left( \overline A \times \{ \pi\}, \overline{\beta}^{\pi}_! \mathcal K^{\pi}\right)$ with the vector space  $\mathbb V \oplus \mathbb V$.
\[ \Sigma_0:  H^1 \left( \overline A \times \{ 0\}, \overline{\beta}^0_! \mathcal K^0\right) \xrightarrow{\Gamma_0} \mathcal K_{x_1}^0 \oplus \mathcal K_{x_3}^0 \xrightarrow{\mu_0^{\pi}}  \mathcal K_{x_1}^{\pi} \oplus \mathcal K_{x_3}^{\pi} \rightarrow \mathbb V \oplus \mathbb V \]
\[ \Sigma_{\pi}:  H^1 \left( \overline A \times \{ \pi\}, \overline{\beta}^{\pi}_! \mathcal K^{\pi}\right) \xrightarrow {\Gamma_{\pi}} \mathcal K_{x_2}^{\pi} \oplus \mathcal K_{x_4}^{\pi} \rightarrow \mathbb V \oplus \mathbb V \]

Summarizing the previous calculations, we can now prove Theorem \ref{theorem2}:


\subsection {Conclusion: Proof of Theorem \ref{theorem2}}
  
At first remark that  $\mu_0^{\pi} \circ \mu_{\pi}^0: \mathcal K^{\pi}_x \oplus  \mathcal K^{\pi}_y \to  \mathcal K^{0}_x \oplus  \mathcal K^{0}_y \to  \mathcal K^{\pi}_x \oplus  \mathcal K^{\pi}_y$ is the isomorphism arising from varying the angel $\vartheta$ via the path $\gamma_U: [0,1] \to \mathbb S^1, \tau \mapsto \pi + \tau \cdot 2 \pi$. This corresponds to the monodromy $U$ around the divisor component  $\{0\} \times \mathbb P^1$.\\
Furthermore from Theorem \ref{Thm1} we know that  
\[ \left( H^1 \left( \overline A \times \{ 0\}, \beta^0_! \mathcal K^0\right), H^1 \left( \overline A \times \{ \pi\}, \beta^{\pi}_1 \mathcal K^{\pi}\right), \sigma_0^{\pi}, \sigma_{\pi}^0\right)\]
 defines a set of Stokes data. With the standard identifications of our vector spaces we get the following diagram: 
	\[
	\begin{xy} 
		\xymatrixrowsep{0.7cm}
		\xymatrix
		{	
		\mathbb V \oplus \mathbb V \ar[rr]^{N_{\pi}}&&\mathbb V \oplus \mathbb V\\
		\mathcal K_{x_1}^{\pi} \oplus \mathcal K_{x_3}^{\pi} \ar[u]&& \\
		\mathcal K_{x_1}^{0} \oplus \mathcal K_{x_3}^{0} \ar[u]_{\mu_0^{\pi}}&& \mathcal K_{x_2}^{\pi} \oplus \mathcal K_{x_4}^{\pi} \ar[uu] \\
		H^1 \left( \overline A \times \{ 0\}, \overline{\beta}^0_! \mathcal K^0\right)  \ar@ /^-1.5cm/[ddd]_{\Sigma_0} \ar@ /^1.5cm/[uuu]^{\Sigma_0}  \ar[d]^{\Gamma_0} 				\ar[u]_{\Gamma_0} \ar@<2pt>[rr]^ {\sigma_0^{\pi}}&& H^1 \left( \overline A \times \{ \pi\}, \overline{\beta}^{\pi}_1 \mathcal K^{\pi}\right)  \ar@ /^1.5cm/ [ddd]^{\Sigma_{\pi}}  		\ar@ /^-1.5cm/ [uuu]_{\Sigma_{\pi}}  \ar[d]_{\Gamma_{\pi}} \ar[u]^{\Gamma_{\pi}} \ar@<2pt>[ll]^{\sigma_{\pi}^0}\\
		\mathcal K_{x_1}^{0} \oplus \mathcal K_{x_3}^{0} \ar[d]^{\mu_0^{\pi}}&& \mathcal K_{x_2}^{\pi} \oplus \mathcal K_{x_4}^{\pi} \ar[dd]\\
		\mathcal K_{x_1}^{\pi} \oplus \mathcal K_{x_3}^{\pi} \ar[d]&& \\
		\mathbb V \oplus \mathbb V & \mathbb V \oplus \mathbb V \ar[l]^{\left( \begin{smallmatrix} U &0\\ 0&U \end{smallmatrix}\right)}&  \mathbb V \oplus \mathbb V  \ar[l]^{N_{0}} \\
		}
	\end{xy}
	\]
 Since $\Sigma_0$ and  $\Sigma_{\pi}$ respect the given filtrations of  the vector spaces  $H^1 \left( \overline A \times \{ 0\}, \overline{\beta}^0_! \mathcal K^0\right)$ and $H^1 \left( \overline A \times \{ \pi\}, \overline{\beta}^{\pi}_1 \mathcal K^{\pi}\right)$, it follows that the induced filtrations on $\mathbb V \oplus \mathbb V$ are mutually opposite with respect to  $S_0^1= \Sigma_{\pi} \circ \sigma_0^{\pi} \circ {\Sigma_0}^{-1}$ and $S_1^0= \Sigma_{0} \circ \sigma^0_{\pi} \circ {\Sigma_{\pi}}^{-1}$.  Thus we conclude that $ \left( L_0, L_1, S_0^1, S_1^0\right)$ defines a set of Stokes data for $\mathcal H^0 p_+ \left( \mathcal M \otimes \mathcal E^{\frac 1 y}\right)$.
 
	 \begin{center}
		\noindent\rule{4cm}{0.4pt}
	 \end{center}
 
 \vspace*{1cm}

\emph{Acknowledgements:} I want to thank Marco Hien for various helpful discussions.


\bibliographystyle{alpha}

 \vspace*{0.5cm}

\begin{tabular}{p{7,5cm}l}
&\textsc{Hedwig Heizinger}  \\
&\emph {Lehrstuhl für Algebra und Zahlentheorie}\\
&\emph{Universitätsstraße 14}\\
&\emph{D-86195 Augsburg}\\
&\emph{Email: hedwig.heizinger@math.uni-augsburg.de}\\
\end{tabular}

\end{document}